\theoremstyle{definition} 
\newtheorem{theorem}{Theorem}[section]
\newtheorem{corollary}[theorem]{Corollary}
\newtheorem{lemma}[theorem]{Lemma}
\newtheorem{proposition}[theorem]{Proposition}
\newtheorem{definition}[theorem]{Definition}
\newtheorem{conjecture}[theorem]{Conjecture}
\newcommand{\R}{\mathbb{R}}
\newcommand{\Z}{\mathbb{Z}}
\newcommand{\cS}{\mathcal{S}}
\newcommand{\cE}{\mathcal{E}}
\newcommand{\cH}{\mathcal{H}}
\newcommand{\cC}{\mathcal{C}}
\newcommand{\sn}{\mathrm{sn}}
\DeclareMathOperator{\gon}{gon}
\DeclareMathOperator{\lw}{lw}
\DeclareMathOperator{\ls}{ls_{\square}}
\definecolor{CornflowerBlue}{rgb}{0.39, 0.6, 0.99}
\definecolor{Orange}{rgb}{0.9, 0.7, 0.4}
\definecolor{Fuchsia}{rgb}{1, 0.2, 0.7}
\def\floor#1{\left\lfloor {#1} \right \rfloor}
\def\ceil#1{\left\lceil {#1} \right\rceil}
\def\abs#1{\left|{#1}\right|}
\definecolor{Ralph}{rgb}{0.80, 0.39, 0.70}
\newcommand{\mnd}[2]{\mathbb{M}^{\textrm{nd}}_{#1, #2}}
\newcommand{\mndexp}[2]{\mathbb{M}^{\textrm{nd}}_{#1, \underline{#2}}}
\newcommand{\bone}[2]{U(#1,#2)}
\newcommand{\modspace}[1]{\mathbb{M}_{#1}}
\newcommand{\cR}{\mathcal{R}}
\DeclareMathOperator{\egon}{egon}
\DeclareMathOperator{\conv}{conv}
\newcommand{\dmnd}[2]{\dim\Bigl(\mathbb{M}^{\mathrm{nd}}_{#1, #2}\Bigr)}
\newcommand{\dmndexp}[2]{\dim\left(\mathbb{M}^{\mathrm{nd}}_{#1, \underline{#2}}\right)}
\newcommand{\dmndc}[2]{\dim\bigl(\mathbb{M}^{\mathrm{nd}}_{#1, #2}\bigr)}
\newcommand{\dmndexpc}[2]{\dim\bigl(\mathbb{M}^{\mathrm{nd}}_{#1, \underline{#2}}\bigr)}
\newcommand{\dmodspace}[1]{\dim\left(\mathbb{M}_{#1}\right)}
\title{The $d$-gonal locus in the moduli space of tropical plane curves}
\author{
    Desmond Leitz, 
    Ralph Morrison, 
    S\o ren Newman-Taylor, 
    Vincent X. Wang}
\date{June-August 2025}
\begin{document}

\begin{abstract}
We introduce and study the locus $\mnd{g}{d} $ of genus $g$ tropical plane curves of gonality $d$ inside the moduli space $\mathbb{M}^{\textrm{nd}}_{g}$ of tropical plane curves of genus $g$. Each such tropical curve arises from a Newton polygon, and we conjecture that the gonality of the tropical curve is equal to an easily computed parameter of this polygon called the expected gonality, closely related to the lattice width of the polygon. Let $\mndexp{g}{d}$ denote the locus of tropical curves whose associated Newton polygon has expected gonality $d$. We prove that for fixed $d$ and sufficiently large genus $g$, the dimensions of these two loci agree:
\[
\dmnd{g}{d} = \dmndexp{g}{d}.
\]
Our results provide evidence that, in sufficiently high genus compared to expected gonality, the gonality of a tropical curve is determined by the expected gonality of the Newton polygon from which it arises.
\end{abstract}

\maketitle

\section{Introduction}\label{sec:introduction}

Tropical geometry is a tool for understanding the combinatorial information of objects studied in algebraic geometry. The correspondence between algebraic curves and their tropical analogues, metric graphs, forms a rich theory. An important example is divisor theory, where Baker and Norine \cite{bn} built a graph theoretic analog of divisor theory on algebraic curves in the language of chip-firing games.  This was followed by the introduction of   the \emph{divisorial gonality} of a graph \cite{baker}.  This invariant mirrors the algebraic one: the minimal degree of a map from a curve to the projective real line. Recent work \cite{CC12, CC17} in algebraic geometry has established a connection between the gonality of a nondegenerate curve and its Newton polygon. In particular, Castryk and Cools \cite{CC17} showed that the gonality of an algebraic curve can be computed from its Newton polygon. 

We conjecture that this relationship holds for tropical curves. To formalize this, for a polygon $P$, we define its \emph{expected gonality}, denoted $\egon(P)$. This is a geometric quantity derived from the lattice width of $P$ (see Section~\ref{sec:background} for a full definition). For polygons $P$ of genus $5$ or greater\footnote{In fact we can say polygons of genus $2$ or greater, with a single exception:  the genus $4$ triangle $2\Upsilon$ illustrated in Figure \ref{figure:triangle_families}, which has expected gonality  $3$ even though its interior polygon has lattice width $2$.}, the expected gonality is equal to two more than the lattice width of the interior polygon of $P$. We propose the following conjecture:

\begin{conjecture}\label{conj:gon=egon}
    Let $\Gamma$ denote any smooth tropical plane curve with Newton polygon $P$. Then, $\gon(\Gamma) = \egon(P)$.
\end{conjecture}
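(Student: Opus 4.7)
The conjecture asserts two inequalities, $\gon(\Gamma) \leq \egon(P)$ and $\gon(\Gamma) \geq \egon(P)$, and my plan is to attack them separately. The upper bound is tractable; the lower bound is the genuine obstacle and the reason the statement remains a conjecture.

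For the upper bound, the plan is to build an explicit degree-$\egon(P)$ divisor of rank at least one via lattice projection. Choose a primitive lattice direction $v$ that realizes $\lw(P^{\circ})$, and consider the tropical morphism $\Gamma \to \R$ obtained by restricting the corresponding linear functional on $\R^{2}$. Since $\Gamma$ is the corner locus of a tropical polynomial with Newton polygon $P$, a generic fiber has cardinality $\lw(P)$ counted with multiplicity; for genus $g(P) \geq 5$ we have $\lw(P) = \lw(P^{\circ}) + 2 = \egon(P)$. The continuously varying family of fibers exhibits a rank-$1$ linear system of the required degree, yielding $\gon(\Gamma) \leq \egon(P)$. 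The triangle $2\Upsilon$ and other small-genus exceptions noted in the footnote would be checked by direct computation.

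For the lower bound, the natural strategy is to adapt the algebraic argument of Castryk--Cools to the chip-firing setting, but one cannot simply pull it back along Baker's specialization lemma: specialization gives $\gon(\Gamma) \leq \gon_{\mathrm{alg}}(X) = \egon(P)$, again the wrong direction. A more hands-on route is:
\begin{enumerate}[label=(\alph*)]
\item Show that for any effective divisor $D$ on the metric graph $\Gamma$ of degree $d < \egon(P)$, there exists a vertex $q$ such that the $q$-reduced equivalent of $D$ does not place a chip on $q$, hence $r(D) = 0$.
\item For each candidate lattice direction $u$, identify a combinatorial region of the dual unimodular triangulation of $P$ across which chip-firing cannot propagate in fewer than $\lw(P^{\circ}) + 2$ chips; the vertex $q$ is then chosen inside this region.
\item Induct on the genus by peeling interior lattice points off $P^{\circ}$, reducing to the thin cases (triangles, trapezoids, hyperelliptic polygons) where explicit descriptions of $\Gamma$ are available.
\end{enumerate}

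The main obstacle is step (b): finding a uniform combinatorial obstruction. An attractive alternative is to prove the stronger statement $\tw(\Gamma) \geq \egon(P)$ and then invoke the van~Dobben de~Bruyn--Gijswijt inequality $\gon(\Gamma) \geq \tw(\Gamma)$; this converts the problem into a purely structural question about the treewidth of smooth tropical plane curves, which is known in hyperelliptic and near-hyperelliptic cases but open in general. Either route ultimately requires understanding how the lattice width of $P^{\circ}$ is encoded in the global connectivity of the dual graph, and it is precisely the absence of a clean combinatorial dictionary for this encoding that explains why only the dimension-theoretic shadow $\dmnd{g}{d} = \dmndexp{g}{d}$ has so far been accessible.
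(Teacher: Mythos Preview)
This statement is labeled a conjecture for good reason: the paper does not prove it, and explicitly presents its main results (Theorems~\ref{tmd:equaldim} and~\ref{lma:maxdim2_restate}) as evidence rather than proof. There is thus no paper proof to compare against. The paper does establish the upper bound $\gon(\Gamma)\le\egon(P)$ (Lemma~\ref{lma:gon_at_most_egon_graphs}), but by a different route than yours: it lifts to an algebraic curve, applies the Castryck--Cools result that the algebraic gonality equals $\egon(P)$, and then invokes Baker's specialization lemma. Your direct tropical-projection argument is plausible in spirit, but the claim that ``a generic fiber has cardinality $\lw(P)$'' glosses over the passage from the non-compact tropical curve to its compact skeleton, which requires care.

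On the lower bound, your steps (a)--(c) are a reasonable sketch of the obstacle, but your treewidth alternative is more problematic than you indicate. Since $\tw(\Gamma)\le\gon(\Gamma)\le\egon(P)$ already holds, proving $\tw(\Gamma)\ge\egon(P)$ would force $\tw=\gon$ for every smooth tropical plane curve---strictly stronger than the conjecture and quite possibly false, as treewidth and gonality are known to diverge on many graphs. The paper's own partial progress on the lower bound instead uses the scramble number (Proposition~\ref{prop:snlowerbound}), which sits between treewidth and gonality, and even then only establishes $\gon(\Gamma)=\egon(P)$ for specific triangulations containing a ``crystal'' (Lemma~\ref{lma:crystalsn}), not for arbitrary $\Gamma$ with Newton polygon $P$.
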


The conjecture holds trivially for polygons of genus $0$ or $1$, and has been shown to be true for hyperelliptic tropical curves \cite{tropical_hyperelliptic_curves_in_the_plane}. 

Conjecture~\ref{conj:gon=egon} motivates the study of the central objects in this paper, which are two loci of genus $g$ smooth planar tropical curves stratified either by divisorial gonality or by expected gonality. These spaces reside in the moduli space $\modspace{g}^{\text{nd}}$ of genus $g$ smooth planar tropical curves first studied in \cite{brodsky_joswig_morrison_sturmfels} (which the authors instead denoted by $\mathbb{M}_g^\textrm{planar}$). Specifically, we define the \emph{gonality locus} $\mnd{g}{d}$ and the \emph{expected gonality locus} $\mndexp{g}{d}$:
\[\mathbb{M}_{g,d}^{\textrm{nd}}=\{\Gamma\in \mathbb{M}_g^{\textrm{nd}}\,|\, \gon(\Gamma)=d\}.\]
\[\mathbb{M}_{g,\underline{d}}^{\textrm{nd}}=\!\!\!\!\!\!\!\!\bigcup_{\stackrel{g(P)=g}{\textrm{egon}(P)=d}} \!\!\!\!\!\!\!\mathbb{M}_P,\]
where $\mathbb{M}_P$ denotes the set of all metric graphs coming from tropical curves with Newton polygon $P$. For more detail on the construction and history of these moduli spaces, see Section~\ref{sec:AG}.

The central result of this paper is a combinatorial-geometric proof that these two moduli spaces have the same dimension in the high-genus case:
\begin{theorem}\label{tmd:equaldim}
Fix $ d \geq 3 $. Then for any genus $g$ satisfying
\[g \geq \max\{d^3,32\},\]
the following equality holds:
\[
\dmnd{g}{d}=\dmndexp{g}{d}.
\]
\end{theorem}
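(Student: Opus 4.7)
The strategy is to exploit the cell decomposition $\modspace{g}^{\mathrm{nd}} = \bigcup_P \mathbb{M}_P$ indexed by Newton polygons $P$ of genus $g$. By construction, $\dim \mndexp{g}{d}$ is the maximum of $\dim \mathbb{M}_P$ over polygons with $g(P) = g$ and $\egon(P) = d$; call this quantity $D(g,d)$. Similarly $\dim \mnd{g}{d}$ is the maximum of $\dim\bigl(\mathbb{M}_P \cap \mnd{g}{d}\bigr)$ over all genus-$g$ polygons. The aim is to show both equal $D(g,d)$.

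The first step is to pin down $D(g,d)$ by classifying the extremal polygons. Since $\egon(P) = d$ is equivalent to $\lw(P^\circ) = d-2$ for genus $\geq 5$, this reduces to understanding interior polygons of lattice width exactly $d-2$ with $g$ lattice points, together with the dimensions $\dim \mathbb{M}_P$ for the associated maximal polygons wrapping them. I expect the extremal interior polygon to be a ``long thin'' trapezoid or rectangular strip of dimensions roughly $(d-2)\times g/(d-2)$, producing a $D(g,d)$ linear in $g$.

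For the lower bound $\dim \mnd{g}{d} \geq D(g,d)$, I would exhibit a top-dimensional subset of the extremal $\mathbb{M}_P$ consisting of curves of gonality exactly $d$. The inequality $\gon(\Gamma) \leq \egon(P) = d$ should follow from the lattice-width projection giving a degree-$d$ tropical morphism; the reverse inequality $\gon(\Gamma) \geq d$ on a dense subset is more subtle. I would invoke existing descriptions of lower-gonality loci (for example, the hyperelliptic locus is classified by $\egon=2$ polygons in \cite{tropical_hyperelliptic_curves_in_the_plane}) and argue either by induction on $d$ or by constructing explicit chip-firing obstructions on the skeleton of a generic curve in the extremal cell, ruling out divisors of rank $1$ and degree $<d$.

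The upper bound $\dim \mnd{g}{d} \leq D(g,d)$ is where the hypothesis $g \geq \max\{d^3,32\}$ does the real work. Since $\gon(\Gamma) \leq \egon(P)$, only polygons with $\egon(P) \geq d$ contribute; those with $\egon(P) = d$ are absorbed into $D(g,d)$, so one must bound $\dim(\mathbb{M}_P \cap \mnd{g}{d})$ when $\egon(P) > d$. I would split into two cases: (a) polygons with $\dim \mathbb{M}_P \leq D(g,d)$, handled trivially; and (b) polygons with $\dim \mathbb{M}_P > D(g,d)$, for which the gonality-$d$ sublocus must be shown to be a proper subvariety of dimension at most $D(g,d)$. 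The latter amounts to showing that a rank-$1$ degree-$d$ divisor on a generic curve in $\mathbb{M}_P$ forces the dual subdivision of $\Gamma$ to specialize into a codimension-$\geq 1$ stratum.

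The main obstacle is precisely case (b) of this upper bound: controlling the gonality-$d$ sublocus within $\mathbb{M}_P$ when $d < \egon(P)$. It combines a structural lattice-point comparison between $D(g,d)$ and $\dim \mathbb{M}_P$ across families of interior polygons of varying width, with a chip-firing argument quantifying how ``rare'' a rank-$1$ degree-$d$ divisor is when the lattice-width projection only yields gonality $\egon(P) > d$. The threshold $g \geq d^3$ should arise as the point at which lengthening the strip interior polygon upon decreasing its width from $d-1$ to $d-2$ outpaces the dimensional cost of the gonality restriction -- making this the quantitative heart of the argument.
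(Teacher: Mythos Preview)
Your overall decomposition into a lower bound $\dim\mnd{g}{d}\geq D(g,d)$ and an upper bound $\dim\mnd{g}{d}\leq D(g,d)$ is correct, but your plan for each half misidentifies where the work lies.

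\textbf{Upper bound.} You split polygons with $\egon(P)>d$ into (a) those with $\dim\mathbb{M}_P\leq D(g,d)$ and (b) those with $\dim\mathbb{M}_P> D(g,d)$, and you call case (b) ``the main obstacle,'' proposing to show the gonality-$d$ sublocus inside such $\mathbb{M}_P$ is proper of bounded dimension. This is where the proposal goes wrong: the paper's entire strategy is to prove that \emph{case (b) is empty} under the hypothesis $g\geq\max\{d^3,32\}$. Concretely, Theorem~\ref{lma:maxdim2_restate} gives the uniform bound $\dim\mathbb{M}_P\leq U(g,\egon(P))$ with $U(g,d')=g+\tfrac{2g}{d'-1}+2d'-3$, and then a calculus argument on $U$ (convexity in $d'$, together with the lattice-width bound $d'\leq 2\sqrt{g+2}$ of Lemma~\ref{lma:maxlatticewidth}) shows $U(g,d')\leq U(g,d+1)\leq D(g,d)$ for every $d'\in[d,2\sqrt{g+2}]$. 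No analysis of gonality subloci inside $\mathbb{M}_P$ is needed at all. Your proposed case (b) would essentially require a quantitative form of Conjecture~\ref{conj:gon=egon}, which is open; the paper bypasses it entirely via the dimension bound.

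\textbf{Lower bound.} Here you are right that one must exhibit a full-dimensional family in the extremal $\mathbb{M}_P$ with $\gon(\Gamma)=d$, and that $\gon(\Gamma)\leq d$ comes from the lattice projection. But ``induction on $d$'' or vague ``chip-firing obstructions'' will not give the reverse inequality. The paper's mechanism is the \emph{scramble number} (Proposition~\ref{prop:snlowerbound}): one constructs a specific beehive triangulation $\Delta$ of $P$ whose dual graph contains a ``crystal'' subgraph admitting a scramble of order $d$, forcing $\gon(\Gamma)\geq d$ for every curve dual to $\Delta$. The threshold $g\geq d^3$ enters precisely here (Lemma~\ref{lma:hascrystal}): it guarantees that every width-$d$ polygon of genus $g$ contains a crystal of length $d$, so the construction applies to the dimension-maximizing polygon. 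This is not where you located it; you placed $g\geq d^3$ in the upper-bound comparison, but in the paper it is the crystal existence in the lower bound that consumes the $d^3$, while the $32$ comes from a boundary case in Lemma~\ref{lma:b1bound2} for the upper bound.
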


In Section~\ref{sec:dimlowg}, we verify this equality for low genera ($g \leq 6$ and $g = 8$). Furthermore, the $d = 1$ case follows immediately, and the $d = 2$ case is also true via a rephrasing of the main result in \cite{tropical_hyperelliptic_curves_in_the_plane}. We conjecture that equality holds for all $g, d$, which is a weaker statement of Conjecture~\ref{conj:gon=egon}:

\begin{conjecture}\label{conj:equalmodspace}
Let $g \geq 0$ and $d \geq 1$. Then,
\[\dmnd{g}{d}=\dmndexp{g}{d}.\]
\end{conjecture}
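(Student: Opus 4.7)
The plan is to prove the equality via two separate inequalities. For the \emph{lower bound} $\dmnd{g}{d} \geq \dmndexp{g}{d}$, the first step is to identify, for each pair $(g,d)$ in the theorem's regime, an explicit extremal polygon $P^\ast(g,d)$ of genus $g$ and expected gonality $d$ that maximizes $\dim(\mathbb{M}_P)$ over all polygons contributing to $\mndexp{g}{d}$. By the footnote in the introduction, $\egon(P) = d$ forces the interior polygon to have lattice width $d-2$ (once the genus exceeds the $2\Upsilon$ exception), so the natural candidates are long subpolygons of the strip $\mathbb{Z} \times [0,d-2]$ whose length is essentially determined by $g$. Using the combinatorial framework of \cite{brodsky_joswig_morrison_sturmfels} one can compute $\dim(\mathbb{M}_{P^\ast(g,d)})$ via regular unimodular triangulations and then verify that the generic tropical curve produced has divisorial gonality exactly $d$, with the upper bound coming from the explicit projection along the short lattice direction and the matching lower bound coming from a standard treewidth or Dhar's-burning argument on the dual metric graph.

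For the \emph{upper bound} $\dmnd{g}{d} \leq \dmndexp{g}{d}$, one should stratify the curves in $\mnd{g}{d}$ by their Newton polygon. Any $\Gamma \in \mnd{g}{d}$ coming from a polygon $P$ satisfies $\gon(\Gamma) \leq \egon(P)$ by the standard easy direction of Conjecture~\ref{conj:gon=egon}, so $\egon(P) \geq d$. Polygons with $\egon(P) = d$ contribute at most $\dmndexp{g}{d}$ by definition, so the only remaining task is to bound the contribution from polygons with $\egon(P) > d$.

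The hard part will be showing that every polygon $P$ of genus $g$ with $\egon(P) > d$ satisfies $\dim(\mathbb{M}_P) \leq \dmndexp{g}{d}$ once $g \geq \max\{d^3,32\}$. The strategy is to combine a combinatorial formula for $\dim(\mathbb{M}_P)$ in terms of edges in a regular triangulation with structural constraints on polygons of fixed genus and large interior lattice width. Roughly, a polygon whose interior has lattice width $w \geq d-1$ is confined to a strip of width $w+2$, which forces its length along the strip to scale like $g/w$ up to lower-order terms, so the maximum $\dim(\mathbb{M}_P)$ scales linearly in $g$ with slope roughly $2/w$ or a similar fraction. Comparing this to the corresponding linear estimate for the extremal $w=d-2$ family isolates a threshold above which the $w=d-2$ case dominates, and the leading $d^3$ term in the genus bound should emerge from this comparison. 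A finite list of small exceptional polygons (e.g., triangular or quadrilateral Newton polygons whose $\dim(\mathbb{M}_P)$ is anomalously large) will likely need separate ad hoc treatment, accounting for the secondary constant $32$.
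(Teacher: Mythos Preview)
The statement you were asked to prove is Conjecture~\ref{conj:equalmodspace}, which the paper explicitly leaves open: it asserts the equality $\dmnd{g}{d}=\dmndexp{g}{d}$ for \emph{all} $g\geq 0$ and $d\geq 1$. The paper does not prove this; it proves only the partial result Theorem~\ref{tmd:equaldim} (the case $d\geq 3$ and $g\geq\max\{d^3,32\}$), together with some low-genus verifications and the cases $d\leq 2$. Your proposal does not prove the conjecture either---you repeatedly restrict to ``the theorem's regime'' and to ``$g\geq\max\{d^3,32\}$,'' so at best you are sketching a proof of Theorem~\ref{tmd:equaldim}, not of the conjecture as stated. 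That is the fundamental gap: nothing in your plan handles intermediate genera (e.g.\ $g=7$, which the paper flags as unresolved), and the paper makes clear that such cases are genuinely obstructed by the limitations of current lower-bound tools like scramble number.

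Read as a sketch of Theorem~\ref{tmd:equaldim}, your outline is broadly in the right spirit but diverges from the paper in a few places. For the lower bound, the paper does not fix a single extremal polygon $P^\ast(g,d)$; rather, it shows that \emph{every} maximal polygon $P$ of genus $g\geq d^3$ and expected gonality $d$ contains a ``crystal,'' and from this builds a beehive triangulation $\Delta$ with $\dim(\mathbb{M}_\Delta)=\dim(\mathbb{M}_P)$ such that every dual curve has gonality exactly $d$, certified by scramble number (not treewidth or Dhar's algorithm). For the upper bound, the paper's mechanism is more specific than your linear-in-$g$ heuristic: it proves the uniform bound $\dim(\mathbb{M}_P)\leq g+\tfrac{2g}{d-1}+2d-3$ via a ``truncation'' analysis of boundary points and column vectors, then shows the function $U(g,d')=g+\tfrac{2g}{d'-1}+2d'-3$ is convex in $d'$ and that $U(g,d+1)$ already falls below $\dmndexp{g}{d}$ in the stated genus range. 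The constant $32$ arises not from exceptional polygon shapes but from the calculus inequality in Lemma~\ref{lma:b1bound2} when $d=3$.
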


A key ancillary result of the proof of Theorem~\ref{tmd:equaldim} is the following tight bound on the dimension of a moduli space of tropical curves with a fixed Newton polygon $P$, denoted $\mathbb{M}_P$, which extends the main result in \cite{fixed_newton_polyon}.

\begin{theorem}
\label{lma:maxdim2_restate}
    Let $P$ be a maximal lattice polygon with $g$ interior points and expected gonality $d>1$. Then,
    \[\dim\left(\modspace{P}\right) \leq g + \frac{2g}{d-1} + 2d - 3.\]
\end{theorem}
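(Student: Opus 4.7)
The plan is to bound $\dim(\modspace{P})$ by combining a general upper bound from \cite{fixed_newton_polyon} with a convex-geometric count of lattice points of $P$ forced by $\egon(P) = d$.

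The first step is to invoke the bound from \cite{fixed_newton_polyon}, which for maximal polygons gives $\dim(\modspace{P}) \leq g + b - 3$, where $b = |\partial P \cap \Z^{2}|$ is the number of boundary lattice points of $P$. With this in hand, the task reduces to showing
\[ b \leq \frac{2g}{d-1} + 2d. \]

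Next, I would use the expected gonality hypothesis: for $g$ sufficiently large we have $\egon(P) = \mathrm{lw}(P^{\circ}) + 2$, so the interior polygon has lattice width $d - 2$. After a unimodular transformation aligning the width direction with $(0,1)$, we may assume $P^{\circ} \subseteq \R \times [1, d-1]$ and hence $P \subseteq \R \times [0, d]$. Let $\ell_{k}$ denote the number of lattice points of $P$ on the row $y = k$. For each interior row $k \in \{1, \dots, d-1\}$, the intersection $P \cap \{y = k\}$ is a segment whose two endpoints contribute at most $2$ boundary lattice points, so
\[ b \leq \ell_{0} + \ell_{d} + 2(d-1). \]

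The heart of the argument is to bound $\ell_{0} + \ell_{d}$ by $\tfrac{2g}{d-1} + O(1)$. Let $T \subseteq P$ be the (possibly degenerate) trapezoid whose parallel sides are the top and bottom edges of $P$; by convexity of $P$, $T \subseteq P$. Applying Pick's theorem to $T$ and noting that the two slanted sides of $T$ contain at most $2(d-1)$ lattice points in their interior yields
\[ g \geq |\mathrm{int}(T) \cap \Z^{2}| \geq \frac{(\ell_{0} + \ell_{d} - 2)(d-1)}{2} - (d-1), \]
which rearranges to $\ell_{0} + \ell_{d} \leq \tfrac{2g}{d-1} + 4$. Combined with the previous step, this gives $b \leq \tfrac{2g}{d-1} + 2d + 2$.

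The main obstacle will be closing the gap of $2$ between this intermediate bound and the target $\tfrac{2g}{d-1} + 2d$. The key should be to exploit the maximality of $P$: for a maximal polygon the slanted edges are determined by edges of $P^{\circ}$, and the extra boundary lattice points they contribute correspond to rigid parameters in the secondary fan that do not increase $\dim(\modspace{P})$. Incorporating this rigidity, either into a refinement of the bound from \cite{fixed_newton_polyon} or into a sharper count in the trapezoid step using maximality, should yield the exact constant $2d - 3$.
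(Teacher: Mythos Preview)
Your initial setup is close to the paper's: place $P$ in the horizontal strip $\R\times[0,d]$, bound the boundary count $r(P)$ row-by-row, and feed this into the dimension formula from \cite{fixed_newton_polyon}. However, the plan collapses at the final step, and the gap cannot be closed the way you suggest.

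The concrete obstruction is that your intermediate target $r(P)\le \frac{2g}{d-1}+2d$ is \emph{false}. Take $P$ to be the $m\times d$ rectangle: then $g=(m-1)(d-1)$ and $r(P)=2m+2d$, while $\frac{2g}{d-1}+2d=2m+2d-2$. So you are short by exactly the $2$ you hoped maximality would absorb, and no amount of sharpening the trapezoid estimate or invoking maximality of $P$ will recover it. What saves the rectangle in the actual theorem is the term you discarded at the very first step: the exact formula is $\dim(\mathbb{M}_P)=g+r(P)-3-c(P)$, and the rectangle has $c(P)=4$. More generally, the paper's proof does not attempt to bound $r(P)$ alone; it controls the combined quantity $\sum_i X_i - c(P)$, where the $X_i$ measure how far $P_{\mathrm{trunc}}$ deviates from a rectangle. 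The case analysis (number of corner ``cuts'' equal to $0,1,2,3,4$) shows this combination is always at most $-2$, and this is exactly the missing $2$. Your phrase ``rigid parameters in the secondary fan'' is gesturing in the right direction, but the rigidity is already encoded in $c(P)$, and you need to bring it back into the inequality rather than refine the $r(P)$ bound.

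Two smaller points: you should not assume $g$ is large, since the theorem is stated for all $g$; and you must separately dispatch the cases $P\cong(d+1)\Sigma$ and $P\cong 2\Upsilon$ where $\egon(P)\neq\lw(P)$ (the paper does this directly via Lemma~\ref{lma:column_vector_formula}).
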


Our work is related to that of Cools and Draisma \cite{CD18}, who also study the dimension of loci of genus $g$ tropical curves stratified by gonality. A key distinction is that their work uses a different definition of gonality, namely the minimal degree of a non-degenerate map to a tree. These definitions agree in the hyperelliptic case where both versions of gonality equal $2$.

Stepping back from the tropical world, we can consider the moduli spaces $\mathcal{M}_P$ and $\mathcal{M}_g^{\textrm{nd}}$ introduced in \cite{castryck_voight}.  The first is the moduli space of all nondegenerate algebraic curves with Newton polygon $P$; and the second is the union of all such moduli spaces over all polygons $P$ of genus $g$.  We may also stratify $\mathcal{M}_g^{\textrm{nd}}$ as $\mathcal{M}_{g,d}^{\textrm{nd}}$ by restricting to the curves of gonality $d$, or equivalently to the polygons with expected gonality $d$.  Since $\dim(\mathcal{M}_P)=\dim(\mathbb{M}_P)$ by \cite{fixed_newton_polyon}, we immediately obtain the following corollary.

\begin{corollary}
    Let $P$ be a maximal lattice polygon with $g$ interior points and expected gonality $d>1$. Then,
    \[\dim\left(\mathcal{M}_{P}\right) \leq g + \frac{2g}{d-1} + 2d - 3.\]
    Moreover, we have
    \[\dim\Bigl(\mathcal{M}_{g,d}^{\textrm{nd}}\Bigr)\leq g + \frac{2g}{d-1} + 2d - 3. \]
\end{corollary}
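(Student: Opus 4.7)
The corollary is essentially a translation of Theorem~\ref{lma:maxdim2_restate} from the tropical to the algebraic setting, combined with a standard stratification of $\mathcal{M}_g^{\mathrm{nd}}$ by Newton polygons. The plan is to handle the two inequalities in sequence, with the second requiring only a finiteness/maximality observation on top of the first.

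For the first inequality, I would simply invoke the theorem of \cite{fixed_newton_polyon} cited in the paragraph above the corollary: for any lattice polygon $P$, one has $\dim(\mathcal{M}_P) = \dim(\mathbb{M}_P)$. Substituting the upper bound from Theorem~\ref{lma:maxdim2_restate} then yields $\dim(\mathcal{M}_P) \leq g + \tfrac{2g}{d-1} + 2d - 3$ whenever $P$ is maximal with $g$ interior points and expected gonality $d > 1$. No further work is needed here.

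For the second inequality, the key point is that $\mathcal{M}_{g,d}^{\mathrm{nd}}$ can be written as a finite union of spaces $\mathcal{M}_P$ indexed by maximal polygons. I would argue this in two steps. First, by definition $\mathcal{M}_{g,d}^{\mathrm{nd}} = \bigcup_{P} \mathcal{M}_P$ where $P$ ranges over all lattice polygons (up to lattice equivalence) with $g$ interior points and expected gonality $d$. Second, for any such $P$ I would take the maximal polygon $P^{\max}$ with the same interior polygon as $P$: since $\egon(P)$ depends only on the lattice width of the interior polygon (up to the single exceptional triangle $2\Upsilon$ of genus $4$, which is irrelevant here as $g \geq 5$ in the nontrivial cases and can be checked separately), we have $g(P^{\max}) = g$ and $\egon(P^{\max}) = d$. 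Moreover, any curve with Newton polygon $P$ also has Newton polygon $P^{\max}$ (after padding with zero coefficients), so $\mathcal{M}_P \subseteq \mathcal{M}_{P^{\max}}$. Hence we may restrict the union to maximal polygons.

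Finally, since there are only finitely many maximal lattice polygons of genus $g$ up to lattice equivalence (a classical fact, used throughout \cite{castryck_voight} and \cite{brodsky_joswig_morrison_sturmfels}), the dimension of the union is the maximum of the dimensions of its components, and the bound from the first inequality applies uniformly to each. The main (minor) obstacle is really just bookkeeping around the exceptional polygon $2\Upsilon$ and confirming that non-maximal polygons do not contribute any new dimension; both issues are resolved by the maximality reduction above.
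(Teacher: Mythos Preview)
Your plan is correct and matches the paper's approach: the paper's entire ``proof'' is the sentence preceding the corollary, namely that $\dim(\mathcal{M}_P)=\dim(\mathbb{M}_P)$ by \cite{fixed_newton_polyon}, after which both inequalities follow immediately from Theorem~\ref{lma:maxdim2_restate} and the decomposition $\mathcal{M}_{g,d}^{\mathrm{nd}}=\bigcup_P \mathcal{M}_P$ over (finitely many) maximal $P$ of genus $g$ and expected gonality $d$.

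One small technical slip to fix: your justification for $\mathcal{M}_P\subseteq\mathcal{M}_{P^{\max}}$ via ``padding with zero coefficients'' does not work as stated, since adding zero coefficients leaves the Newton polygon equal to $P$, not $P^{\max}$. The cleanest route is to stay on the dimension level: use $\dim(\mathcal{M}_P)=\dim(\mathbb{M}_P)$ for all $P$, invoke the tropical inclusion $\mathbb{M}_P\subseteq\mathbb{M}_{P^{\max}}$ from \cite[Lemma~2.6]{brodsky_joswig_morrison_sturmfels}, and conclude $\dim(\mathcal{M}_P)\le\dim(\mathcal{M}_{P^{\max}})$. This also makes your $2\Upsilon$ digression unnecessary, since $2\Upsilon$ is itself maximal and the reduction is purely about dimensions.
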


We obtain several other results regarding algebraic moduli spaces, which follow from the results in Section \ref{sec:dimmodspace_lowgd}. We note that $\mathcal{M}_{g,2}^{\textrm nd}$ is the locus of hyperelliptic curves, which is well known to have dimension $2g-1$; and $\mathcal{M}_{g,3}^{\textrm nd}$ is the locus of trigonal curves, which is well known to have dimension $2g+1$.  Our new contributions are dimensions for $\mathcal{M}_{g,4}^{\textrm nd}$ for certain sufficiently large values of $g$, and for $\mathcal{M}_{g,5}^{\textrm nd}$ for all sufficiently large values of $g$.

\begin{theorem}  Let $U(g,d)=g+\frac{2g}{d-1}+2d-3$. For $g\geq 7$ and $g\equiv 1\pmod 3$, we have
\[\dim \left(\mathcal{M}_{g,4}^\textrm{nd}\right)=\lfloor U(g,4)\rfloor.\]
For $g\geq 12$, we have
\[\dim\left(\mathcal{M}_{g,5}^\textrm{nd}\right)=\lfloor U(g,5)\rfloor-1.\]
\end{theorem}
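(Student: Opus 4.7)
The plan is to sandwich $\dim\left(\mathcal{M}_{g,d}^{\textrm{nd}}\right)$ between matching upper and lower bounds, exploiting the equality $\dim(\mathcal{M}_P)=\dim(\mathbb{M}_P)$ from \cite{fixed_newton_polyon} to move freely between the algebraic and tropical settings. For $d=4$, the Corollary gives the upper bound $\dim\left(\mathcal{M}_{g,4}^{\textrm{nd}}\right)\leq\lfloor U(g,4)\rfloor$ since dimensions are integers. For the matching lower bound, since $\mathcal{M}_{g,4}^{\textrm{nd}}$ is a union of the $\mathcal{M}_P$ over maximal polygons of genus $g$ and expected gonality $4$, it suffices to exhibit one such polygon $P_g$ (equivalently, a maximal polygon whose interior has lattice width $2$ and contains $g$ lattice points) whose moduli space has dimension exactly $\lfloor U(g,4)\rfloor$. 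A natural candidate is a family of trapezoids of lattice width $4$, such as $\conv\bigl((0,0),(a,0),(b,4),(0,4)\bigr)$, with parameters tuned so that the interior-point count is $g$; the congruence restriction $g\equiv 1\pmod 3$ selects the residues in which these integer parameters exist and in which the closed-form dimension of $\mathbb{M}_{P_g}$, computed via the edge-length analysis of Section~\ref{sec:dimmodspace_lowgd}, lands on $\lfloor U(g,4)\rfloor$ rather than one less.

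For $d=5$, the Corollary's upper bound is never attained, so the proof must first improve it by $1$ and then construct a matching example. The refined upper bound $\dim(\mathcal{M}_P)\leq\lfloor U(g,5)\rfloor-1$, for every maximal $P$ of expected gonality $5$ and genus $g$, would follow from a finite case analysis of the possible maximal shapes of interior lattice width $3$: for each such shape one computes $\dim(\mathbb{M}_P)$ as a function of the polygon parameters and checks that it always falls at least one short of $U(g,5)$. Having established this refined bound, I would exhibit a specific polygon family---likely wider trapezoids or hexagons of lattice width $5$---whose tropical moduli space attains $\lfloor U(g,5)\rfloor-1$ for every $g\geq 12$, completing the equality.

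The main obstacle is the combinatorial bookkeeping in Section~\ref{sec:dimmodspace_lowgd}: one must enumerate the finitely many maximal polygon shapes of expected gonality $4$ or $5$, compute $\dim(\mathbb{M}_P)$ for each in closed form as a function of $g$, verify that the maximizer matches the claimed floored value of $U(g,d)$ (respectively, $U(g,d)-1$), and keep careful track of floor functions and residue classes. The threshold $g\geq 7$ for $d=4$ and the more stringent $g\geq 12$ for $d=5$, as well as the $g\equiv 1\pmod 3$ restriction in the $d=4$ case, arise because in the excluded low-genus ranges or wrong residue classes the parameter values producing the extremal polygon cease to be integers, so the optimal shape changes and must be treated separately. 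The most delicate part of the argument is justifying the uniform $-1$ gap in the $d=5$ case, which requires certifying that \emph{every} candidate shape---not only the extremal one---falls at least one short of the Corollary bound.
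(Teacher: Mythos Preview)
Your overall strategy---upper bound from the Corollary (i.e., Theorem~\ref{lma:maxdim2_restate}), lower bound from an explicit polygon, and the passage between $\mathcal{M}_P$ and $\mathbb{M}_P$---is exactly what the paper does. However, your proposed lower-bound constructions miss the mark in a concrete way.

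For $d=4$, a trapezoid $\conv\{(0,0),(a,0),(b,4),(0,4)\}$ will not attain $\lfloor U(g,4)\rfloor$. The dimension formula is $\dim(\mathbb{M}_P)=g+r(P)-3-c(P)$ (Lemma~\ref{lma:column_vector_formula}), and such a trapezoid always admits column vectors such as $(-1,0)$ and $(0,-1)$, while its slanted edge typically misses boundary lattice points at some heights, so $r(P)<a+b+2d$. Concretely, at $g=7$ the trapezoid $\conv\{(0,0),(4,0),(2,4),(0,4)\}$ is maximal with $r=12$ and $c=2$, giving dimension $14$, whereas $\lfloor U(7,4)\rfloor=16$. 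The paper instead takes the $\bigl(\tfrac{g-1}{3}+2\bigr)\times 4$ rectangle and removes two copies of $2\Sigma$ from antipodal corners; this hexagon has $c(P)=0$ and all edges of slope $0$, $\infty$, or $\pm 1$ (so $r(P)$ is maximal), and a direct count via Lemma~\ref{lma:column_vector_formula} gives exactly $\lfloor U(g,4)\rfloor$. The key insight you are missing is that minimizing $c(P)$---by cutting corners to destroy the column vectors of the rectangle---is what drives the construction.

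For $d=5$, your plan to refine the upper bound by a ``finite case analysis of the possible maximal shapes of interior lattice width $3$'' is too vague: there is no finite list of shapes independent of $g$. What the paper actually does is trace through the equality case of the proof of Theorem~\ref{lma:maxdim2}. Writing $\dim(\mathbb{M}_P)\leq U(g,5)+2+\sum_i X_i-c(P)$ with the $X_i$ from \eqref{eq: xi}, one assumes some $P$ attains $\lfloor U(g,5)\rfloor$, deduces that $r(P)$ is maximal (forcing every edge slope to be $0$, $\infty$, or $1/k$), and then eliminates each possible configuration of cuts $(x_i,y_i)$ by checking that either $\sum X_i$ is too negative or $c(P)$ is too large. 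This is a case analysis on truncation data, not on polygon shapes. For the matching lower bound the paper again uses rectangles with isosceles corner cuts, the cut sizes depending on $g\bmod 4$, computed via Lemma~\ref{lma:chrangledim_computation}; your ``trapezoids or hexagons'' would have to be replaced by these specific constructions.
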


These formulas follow from Theorems \ref{thm: d=4} and \ref{thm: d=5}.

\subsection{Organization of the paper}

In Section~\ref{sec:background}, we review background on lattice polygons, divisorial gonality of metric graphs, and tropical curves. In Section~\ref{sec:AG}, we construct the tropical moduli spaces analogous to their algebraic counterparts, and summarize some known results. The proof of our main result, Theorem~\ref{tmd:equaldim}, is spread across Sections~\ref{sec:lower},~\ref{sec:upper}, and ~\ref{sec:equal-dim}, with a proof of Theorem~\ref{lma:maxdim2_restate} in Section~\ref{sec:upper}. Finally, in Section~\ref{sec:dimmodspace_lowgd}, we show equality of the dimensions of the two moduli spaces in low genus, as well as explicit computations of the dimension of $\mndexp{g}{d}$ for select cases.

\section{Background and definitions}\label{sec:background}

The paper primarily uses tools in geometric combinatorics. In this section, we introduce the main objects and techniques, as well as several key definitions.

\subsection{Lattice polygons and subdivisions}

Throughout this paper, we take a \emph{lattice polygon} (or simply, a polygon) to be the convex hull of a non-empty finite set of points in $\Z^2\subset\R^2$.  We use $A(P)$ to denote the \emph{area} of a polygon. Let the \emph{interior polygon} $P^{(1)}$ be the convex hull of the lattice points in the interior of $P$. The \emph{genus} of a polygon $P$, denoted $g(P)$, is the number of interior lattice points. A polygon is \emph{non-hyperelliptic} if $\dim(P^{(1)}) = 2$ and \emph{hyperelliptic} if $\dim(P^{(1)}) \leq 1$, or equivalently if the interior lattice points are all collinear. Denote by $\partial P$ the topological boundary of $P$ in $\mathbb{R}^2$.

We define two specific families of triangles here. For a positive integer $d$, let $d\Sigma$ denote the triangle $\operatorname{conv}\left\{(0,0), (d,0), (0,d)\right\}$, and $d\Upsilon$ denote the triangle $\operatorname{conv}\left\{(-d,-d), (d,0), (0,d)\right\}$. Examples are illustrated in Figure \ref{figure:triangle_families}.

    \begin{figure}[hbt]
        \centering
        \includegraphics[width=0.4\linewidth]{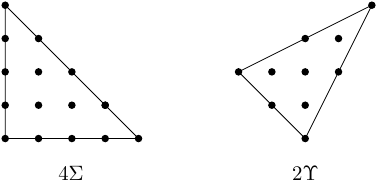}
        \caption{The triangles $4\Sigma$ and $2\Upsilon$.}
        \label{figure:triangle_families}
    \end{figure}

A polygon $P$ is \emph{maximal} if it is not properly contained in any other lattice polygon with the same interior polygon. A $\Z$-affine transformation of $\R^2$ is a map $\varphi: \R^2 \to \R^2$ of the form $p \mapsto Ap + b$ for some $A \in \operatorname{GL}_2(\Z)$ and $b \in \Z^2$. We say two polygons $P, P'$ are \emph{equivalent} if and only if there exists a $\Z$-affine transformation $\varphi$ such that $\varphi(P) = P'$.

An alternate description of a polygon $P$ is as the intersection of half-planes. Each edge $\tau \subset P$ corresponds to a half-plane
\[\cH_{\tau} = \{(x,y) \in \R^2: \alpha_\tau x + \beta_\tau y \leq \gamma_\tau\}\]
such that $P = \bigcap_{\tau}\cH_\tau$. The integer coefficients $\alpha_\tau, \beta_\tau, \gamma_\tau$ can be uniquely determined by requiring $\gcd(\alpha_\tau, \beta_\tau) = 1$. For each such half-plane, we define 
\[\cH_{\tau}^{(-1)} = \{(x,y) \in \R^2: \alpha_\tau x + \beta_\tau y \leq \gamma_\tau+1\}.\]

We call $P^{(-1)} = \bigcap_{\tau}\cH_\tau^{(-1)}$ the \emph{relaxed polygon} of $P$. The intersection of $P^{(-1)}$ and the boundary of $\cH_{\tau}^{(-1)}$ is denoted $\tau^{(-1)}$. If $v$ is a vertex of $P$, then let $\tau, \tau'$ be the edges adjacent to $v$, so that $v$ is the apex of the cone $\cH_{\tau}\cap \cH_{\tau'}$. Define $v^{(-1)}$ to be the apex of the cone $\cH^{(-1)}_{\tau}\cap\cH^{(-1)}_{\tau'}$.

As illustrated to the right in Figure~\ref{figure:pushout}, a relaxed polygon need not be a lattice polygon.  It turns out that the lattice polygons that are the relaxations of other lattice polygons are precisely the maximal non-hyperelliptic polygons:

\begin{proposition}[\cite{Koelman_Dissertation}, \textsection 2.2 and \cite{HS09}, Lemmas 9 and 10]
    A non-hyperelliptic polygon $P$ is maximal if and only if $P$ is the relaxed polygon of $P^{(1)}$; that is, $\left(P^{(1)}\right)^{(-1)} = P$.
\end{proposition}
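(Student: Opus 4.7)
The plan is to prove both directions of the biconditional via a single containment lemma, then to handle the delicate integrality of the relaxation at the end.

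First I would establish the following containment lemma: \emph{for any lattice polygon $P'$ whose interior polygon equals $P^{(1)}$, with $\dim P^{(1)}=2$, one has $P'\subseteq (P^{(1)})^{(-1)}$.} The argument is local to each edge $\tau'$ of $P^{(1)}$. Non-hyperellipticity guarantees that $\tau'$ has positive length, so the line $\ell_{\tau'}^{(-1)}$ parallel to $\tau'$ at lattice distance one on the outside of $P^{(1)}$ carries lattice points whose projections land inside $\tau'$. If $P'$ reached strictly past $\ell_{\tau'}^{(-1)}$, such a lattice point would sit in the interior of $P'$ without belonging to $P^{(1)}$, contradicting $(P')^{(1)}=P^{(1)}$.

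The $(\Leftarrow)$ direction is then immediate: if $P=(P^{(1)})^{(-1)}$ and $P'$ is any lattice polygon with $(P')^{(1)}=P^{(1)}$ and $P'\supseteq P$, the lemma forces $P'\subseteq (P^{(1)})^{(-1)}=P$, so $P'=P$ and $P$ is maximal.

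For $(\Rightarrow)$, assume $P$ is maximal. The lemma gives $P\subseteq (P^{(1)})^{(-1)}$. Let $Q=\conv(\Z^2\cap (P^{(1)})^{(-1)})$. A short check shows the interior lattice points of $(P^{(1)})^{(-1)}$ agree with the lattice points of $P^{(1)}$, since a lattice point is strictly inside every $\cH_{\tau'}^{(-1)}$ iff it lies weakly inside every $\cH_{\tau'}$ (the first lattice line parallel to $\ell_{\tau'}^{(-1)}$ on its interior side is $\ell_{\tau'}$). Therefore $Q^{(1)}=P^{(1)}$, and since $P\subseteq Q$ with both lattice polygons, maximality yields $P=Q$. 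It remains to upgrade $P=Q$ to $P=(P^{(1)})^{(-1)}$, that is, to show every vertex of $(P^{(1)})^{(-1)}$ is a lattice point.

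The main obstacle is precisely this integrality claim: for each vertex $v'$ of $P^{(1)}$, the apex of the cone $\cH_{\tau_1'}^{(-1)}\cap \cH_{\tau_2'}^{(-1)}$ formed by the two pushed-out half-planes adjacent to $v'$ must be integral. I would verify this via a local analysis at $v'$: apply a $\Z$-affine change of coordinates to place $v'$ at the origin and normalize the primitive inward edge normals, then compute the apex directly from the shifted defining equations. The configurations that can arise as a vertex of a $2$-dimensional polygon realizable as an interior polygon are classified in the cited references; in each case the apex turns out to be integral. This local vertex analysis is the technical heart of Koelman's and Haase--Schicho's argument and closes the proof.
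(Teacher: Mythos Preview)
The paper does not supply its own proof of this proposition; it is stated with a citation to \cite{Koelman_Dissertation} and \cite{HS09} and no proof environment follows. So there is nothing in the paper to compare your argument against line by line.

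That said, your outline tracks the approach of the cited sources. The containment lemma and the $(\Leftarrow)$ direction are straightforward; your phrasing of the containment argument is a little loose (it is not immediate that a lattice point on $\ell_{\tau'}^{(-1)}$ ``whose projection lands inside $\tau'$'' actually lies in the interior of $P'$), but the claim is correct and can be made precise by a short Pick-theorem count on the triangle $\conv(p,q,v)$ with $p,q$ the endpoints of $\tau'$ and $v$ a lattice vertex of $P'$ beyond $\ell_{\tau'}^{(-1)}$. For $(\Rightarrow)$ you correctly isolate the real content: showing that $(P^{(1)})^{(-1)}$ has integral vertices whenever $P^{(1)}$ arises as an interior polygon. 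You then defer this to the local vertex classification in \cite{HS09}, which is exactly what those references do, so your sketch is an honest summary rather than an independent proof. If you want a self-contained argument, that local analysis (normalizing a vertex of $P^{(1)}$ and checking the finitely many admissible cone types) is what you would need to write out.
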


    \begin{figure}[hbt]
        \centering
        \includegraphics[width=0.8\linewidth]{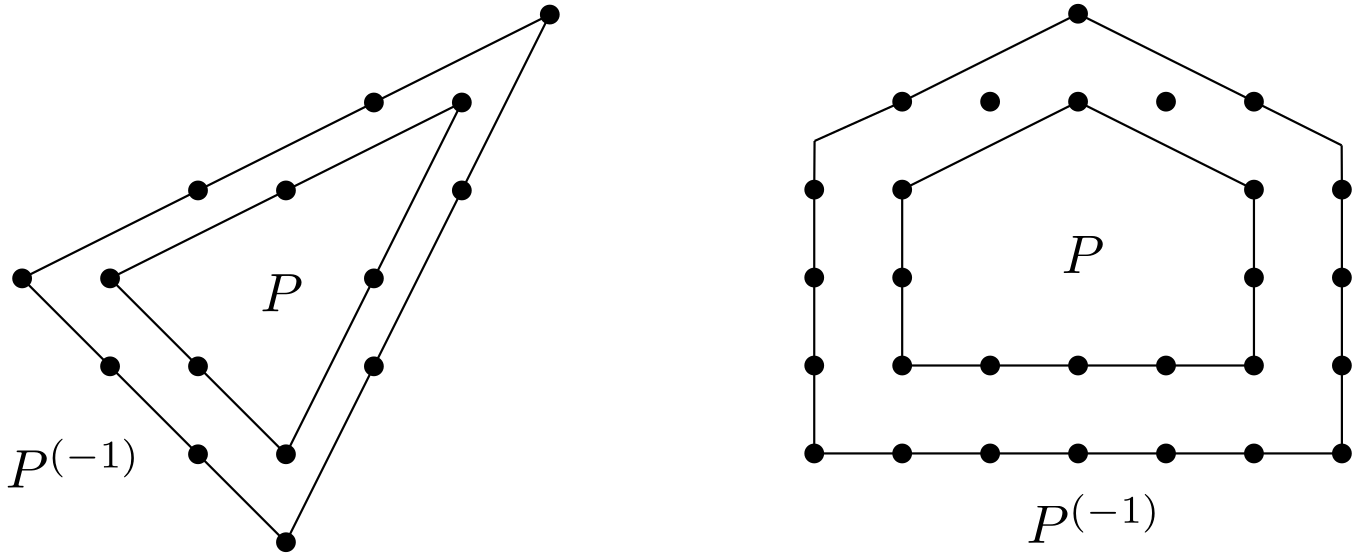}
        \caption{Polygons and their relaxations. Note that on the right, $P^{(-1)}$ is not a lattice polygon, meaning that $P$ is not the interior polygon of any lattice polygon}
        \label{figure:pushout}
    \end{figure}

As introduced in \cite{fixed_newton_polyon}, \emph{column vectors} are computational tools used to determine the dimension of certain moduli spaces described in Section~\ref{sec:introduction}.

\begin{definition}
    A nonzero vector $v \in \Z^2$ is a \emph{column vector} of a polygon $P$ if there exists a facet $\tau \subset P$ such that
    \[v + \left((P \setminus \tau)\cap\Z^2\right) \subset P.\]
\end{definition}

Geometrically, $v$ is a column vector if the lattice points of $P$, excluding those on one of its edges $\tau$, remain inside $P$ after being translated by $v$. We use $c(P)$ to denote the number of column vectors of a polygon $P$. Two polygons and their column vectors are illustrated in Figure \ref{figure:columns}.

    \begin{figure}[hbt]
        \centering
        \includegraphics[width=0.8\linewidth]{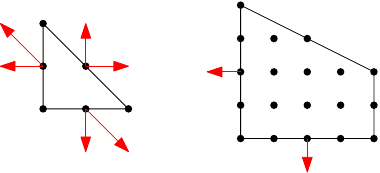}
        \caption{Polygons and their column vectors.}
        \label{figure:columns}
    \end{figure}

The \emph{lattice width} of a two-dimensional polygon $P$, denoted $\operatorname{lw}(P)$, is the smallest positive integer $s$ such that there exists a $\Z$-affine transformation mapping $P$ into the horizontal strip $\{(x, y) \in \R^2 : 0 \leq y \leq s\}$. Geometrically, this is one less than the minimum number of parallel lines needed to cover the lattice points of $P$. Similarly, the \emph{lattice size} of a polygon $P$, denoted $\ls(P)$, is the smallest integer $s$ for which there exists a $\Z$-affine transformation that can map $P$ into $s\square$, defined as the square $\conv\{(0,0), (0,s), (s,s), (s,0)\}$.

For the purposes of this paper, a \emph{subdivision} of a polygon $P$ is a partition of $P$ into smaller polygons, called \emph{cells}. We refer the reader to \cite[Definition 2.3.1]{Triangulations} for a complete ``bullet-proof'' definition. A \emph{unimodular triangulation} is a subdivision where all cells are triangles of area 1/2. Equivalently, in two dimensions, a subdivision is a unimodular triangulation if and only if it cannot be further subdivided using cells whose vertices are lattice points \cite[Corollary 9.3.6]{Triangulations}. A subdivision $\cS$ is \emph{trivial} if it has a cell that is the entirety of $P$.

A function $\omega: P\cap\Z^2 \to \R$ is a \emph{height function}. A subdivision $\cS$ is \emph{regular} if it can be obtained by projecting the facets of the lower convex hull of the graph in $\R^3$ of some height function $\omega$. The notation $P\vert_C$ denotes the restriction of $P$ on some cell $C$, and a \emph{refinement} $\cR(P\vert_C, \omega)$ denotes the subdivision of $C$ induced by $\omega$.

\subsection{Expected gonality}

We now define the \emph{expected gonality} of $P$, denoted $\egon(P)$. This definition is motivated by a result from algebraic geometry by Castryck and Cools \cite[Corollary 6.2]{CC17}, which states that the gonality of an algebraic curve can be determined from the geometric properties of its Newton polygon's interior, $P^{(1)}$. 

The value of $\egon(P)$ depends on the geometry of $P^{(1)}$:
\begin{itemize}
    \item If $P^{(1)}$ is two-dimensional, then we define $\egon(P) = \lw(P^{(1)}) + 2$. There is one exception: if $P \cong 2\Upsilon$, then we set $\egon(P) = 3$.
    \item If $P^{(1)}$ is a degenerate line segment (or point), then we define $\egon(P) = 2$.
    \item If $P^{(1)}$ is empty, then we define $\egon(P) = 1$.
\end{itemize}

Notably, the last two cases are consistent with the main formula $\egon(P) = \lw(P^{(1)}) + 2$ if we formally define the lattice width of a degenerate line segment or point to be $0$, and that of an empty set to be $-1$. While the formal definition uses the interior polygon $P^{(1)}$, the expected gonality is almost always equal to the lattice width of the polygon $P$ itself:

\begin{proposition}\label{lwisegon}[\cite{CC12}, Theorem 4]
    Let $P$ be a polygon. Then,
    \[\lw(P) = \egon(P)\]
    unless $P\cong d\Sigma$ for some integer $d\geq2$ or $P\cong 2\Upsilon$, in which case $\lw(P)=\egon(P)+1$.
\end{proposition}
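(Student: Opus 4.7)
My plan is to sandwich $\lw(P)$ between $\lw(P^{(1)}) + 2$ from below (always) and from above (outside the $d\Sigma$ family), then handle the small exceptions directly. I adopt the convention that $\lw$ of a line segment or point is $0$ and of the empty set is $-1$, so $\egon(P) = \lw(P^{(1)}) + 2$ except when $P \cong 2\Upsilon$, where $\egon(P) = 3$ by definition.

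First I would prove the lower bound $\lw(P) \geq \lw(P^{(1)}) + 2$ whenever $P^{(1)}$ is nonempty. For any primitive $(a,b)$ achieving $\lw(P)$, let $M, m$ be the extreme values of $ax + by$ on $P$. These are integers attained at vertices, and any lattice point on the supporting line $\{ax + by = M\}$ or $\{ax+by=m\}$ lies on $\partial P$. Hence every interior lattice point satisfies $m + 1 \leq ax + by \leq M - 1$, so the width of $P^{(1)}$ in direction $(a,b)$ is at most $\lw(P) - 2$, giving $\lw(P^{(1)}) \leq \lw(P) - 2$.

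For the upper bound in the non-hyperelliptic case, the preceding proposition gives $P \subseteq (P^{(1)})^{(-1)}$, so it suffices to bound the lattice width of the relaxation. For each vertex $v$ of $P^{(1)}$ with adjacent edges of primitive outer normals $n_1, n_2$, the shifted vertex $v^{(-1)}$ satisfies $n_i \cdot (v^{(-1)} - v) = 1$; hence, decomposing any direction $(a,b)$ in the outer cone at $v$ as $(a,b) = s_1 n_1 + s_2 n_2$ with $s_i \geq 0$, the value of $ax + by$ jumps by exactly $s_1 + s_2$ from $v$ to $v^{(-1)}$. So it suffices to find a primitive direction achieving $\lw(P^{(1)})$ whose decompositions at both the max- and min-vertex of $P^{(1)}$ satisfy $s_1 + s_2 \leq 1$. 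The hyperelliptic case ($\dim P^{(1)} \leq 1$) is handled via the classification of maximal hyperelliptic polygons: they are either trapezoids of height $2$ (giving $\lw = 2$) or $3\Sigma$, which falls into the $d\Sigma$ exception; empty-interior polygons are either width-$1$ strips or $2\Sigma$.

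The main obstacle is this balanced-direction claim, which I would resolve by a case analysis on the vertex configurations of $P^{(1)}$. When $P^{(1)}$ has two parallel edges perpendicular to some lattice-width direction, taking $(a,b)$ to be the common outer normal gives $s_1 + s_2 = 1$ at each extremal vertex automatically. For lattice triangles, symmetric examples like $\Upsilon$ (take $(1,0)$, yielding $s_1 = 1/3, s_2 = 2/3$ on each side) admit a balanced direction, while triangles equivalent to $(d-3)\Sigma$ are precisely the obstruction: every lattice-width-achieving direction forces $s_1 + s_2 = 2$ at one extremum, yielding a total increment of $3$ and $\lw(d\Sigma) = \lw(P^{(1)}) + 3 = \egon(d\Sigma) + 1$. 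The exception $P \cong 2\Upsilon$ is then explained by the definition: the general formula gives $\lw(2\Upsilon) = \lw(\Upsilon) + 2 = 4$, but $\egon(2\Upsilon) = 3$ by the special-case definition, producing the unit discrepancy.
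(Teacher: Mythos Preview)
The paper does not give its own proof of this proposition; it is quoted as Theorem~4 of \cite{CC12} and used as a black box. So there is nothing in the paper to compare your argument against.

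On its own terms, your outline has a genuine gap precisely where you flag it. The lower bound $\lw(P)\ge\lw(P^{(1)})+2$ and the relaxation framework for the upper bound are correct and standard. But the crux is your ``balanced-direction'' claim: that for every two-dimensional $P^{(1)}$ not equivalent to some $k\Sigma$, one can choose a primitive lattice-width direction $(a,b)$ for which the coefficient sums $s_1+s_2$ at both extremal vertices are at most~$1$. You propose a case analysis, but the cases you sketch---parallel edges orthogonal to a width direction, and triangles---are not exhaustive (quadrilaterals and larger polygons with no such parallel pair are unaddressed), and even within the triangle case you assert rather than prove that standard simplices are the sole obstruction. Establishing this claim is exactly the substance of the Castryck--Cools theorem you are trying to reprove, so what you have written is a reasonable outline rather than a proof. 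A smaller point: your description of maximal hyperelliptic polygons as ``trapezoids of height~$2$ or $3\Sigma$'' is not accurate as stated, though the conclusion you actually need (that every hyperelliptic polygon other than $3\Sigma$ has lattice width~$2$) is true.
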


\subsection{Metric graphs, gonality, scramble number}

We provide a brief overview of divisor theory on metric graphs. For a more detailed exposition, we refer the reader to \cite{GK08}. A \emph{combinatorial graph} $G$ will refer to a finite, multigraph, which may contain self-loops. A \emph{metric graph} $\Gamma$ is a pair $(G, \ell)$ consisting of an underlying combinatorial graph $G$ and a length function $\ell:E(G) \to \R_{> 0}$. This pair $(G, \ell)$ defines a topological space by gluing intervals of the real line according to the structure of $G$. In an abuse of notation, we will also call this space $\Gamma$.

A \emph{divisor} $D$ on $\Gamma$ is a finite formal sum of points of $\Gamma$ with integer coefficients. The \emph{degree} of a divisor $D = \sum_{i} a_i \cdot p_i$ (where $a_i \in \Z$ and $p_i \in \Gamma$) is the sum $\sum_{i}a_i$. A divisor is \emph{effective} if all of its coefficients $a_i$ are non-negative. 

Divisor equivalence is defined using tropical rational functions. A \emph{tropical rational function} is a continuous piece-wise linear function $f: \Gamma \to \R$ with a finite number of integer-valued slopes. Let the \emph{order of $f$ at a point $p \in \Gamma$}, denoted $\operatorname{ord}_p(f)$, be the sum of outgoing slopes of all segments of $\Gamma$ with endpoint at $p$. 

The \emph{divisor associated with $f$} is defined to be
\[(f) := \sum_{P \in \Gamma} \operatorname{ord}_p(f) \cdot p.\]
We say that two divisors $D, D'$ are equivalent if $D - D'$ is the divisor $(f)$ for some tropical rational function $f$. Broadly speaking, one may view this equivalence as a ``continuous'' form of chip-firing.

The \emph{rank} $r(D)$ of a divisor $D$ is $-1$ if $D$ is not equivalent to an effective divisor. Otherwise, $r(D)$ is equal to the maximum integer $r \geq 0$ such that for any effective divisor $E$, the divisor $D - E$ is equivalent to some effective divisor. The \emph{(divisorial) gonality} of a metric graph $\Gamma$ is defined as the minimum degree of a rank-$1$ divisor.

A metric graph with gonality 2 is called \emph{hyperelliptic}. Several equivalent characterizations of hyperellipticity exist in the literature, such as the existence of a degree-2 harmonic morphism from the graph to a tree, or the existence of an involution whose quotient is a tree \cite[Theorem 1.3]{Cha13}.

Now, we briefly introduce the scramble number of a combinatorial graph, an invariant that lower bounds the gonality of a metric graph. For a more detailed exposition, we refer the reader to \cite{echavarria2021scramble}, \cite{new_lower_bound}. 

A \emph{scramble} on a combinatorial graph $G$ is a collection $\cE = \{V_1, \dots, V_\ell\}$ of connected vertex subsets $V_i$, where each $V_i$ is called an $\emph{egg}$. With every scramble, we associate two parameters: its hitting number $h(\cE)$ and its egg-cut number $e(\cE)$.

A \emph{hitting set} of $\cE$ is a set of vertices $W \subseteq V(G)$ that has nonempty intersection with each egg. Let $h(\cE)$ be the minimum size of a hitting set of $\cE$. An \emph{egg-cut} of $\cE$ is a subset $A \subseteq V(G)$ such that $A$ and $A^C$ each fully contain an egg; the size of the egg-cut is $|E(A, A^C)|$. Let $e(\cE)$ denote the minimum size of an egg-cut. We define the order of a scramble to be
\[\|\cE\| = \min\{h(\cE), e(\cE)\}.\]
Finally, the \emph{scramble number} of a graph $G$, denoted $\sn(G)$, is the maximum order of all scrambles on $G$. In \cite{new_lower_bound} it is shown that the gonality of the finite graph $G$ (defined similarly to the metric case) is lower bounded by its scramble number. In \cite{echavarria2021scramble}, this result is translated to metric graphs. They refer to the canonical loopless model $(G,\ell)$ of a metric graph $\Gamma$, which has no loops and has no vertices of degree $2$ except possibly ones with the same neighbor twice.

\begin{proposition}[\cite{echavarria2021scramble}, Lemma 2.15]\label{prop:snlowerbound}
    Let $(G, \ell)$ be the canonical loopless model of a metric graph $\Gamma$. Then,
\[\sn(G) \leq \gon(\Gamma).\]
\end{proposition}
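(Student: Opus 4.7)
The plan is to pass from $\Gamma$ to a sufficiently refined combinatorial model $G'$ on which a minimum-degree rank-one divisor can be realized at vertices, apply the finite-graph inequality $\sn(G')\leq\gon(G')$ from \cite{new_lower_bound} on $G'$, and then transfer the bound back to $G$ by showing that scramble number is non-decreasing under edge subdivision.

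Fix a rank-one divisor $D$ on $\Gamma$ with $\deg D=\gon(\Gamma)$. Since $\supp(D)$ is finite, we may subdivide finitely many edges of $G$ to obtain a combinatorial model $G'$ of $\Gamma$ in which every point of $\supp(D)$ is a vertex. Standard identifications between metric and combinatorial divisor theory on such a refined model imply that $D$ still has rank at least one when viewed as a combinatorial divisor on $G'$; in particular $\gon(G')\leq\deg D=\gon(\Gamma)$. Combining with \cite{new_lower_bound} then yields $\sn(G')\leq\gon(G')\leq\gon(\Gamma)$.

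It remains to show $\sn(G)\leq\sn(G')$. Because $G'$ is obtained from $G$ by finitely many edge subdivisions (modulo the loop-handling convention in the canonical loopless model), it suffices to argue that scramble number does not decrease under subdivision. Given any scramble $\cE=\{V_1,\dots,V_\ell\}$ on $G$, we lift it to a scramble $\cE'$ on $G'$ by adjoining to each egg $V_i$ every subdivision vertex placed on an edge of $G$ both of whose endpoints already lie in $V_i$; each resulting egg is connected in $G'$. Then $h(\cE)\leq h(\cE')$ by converting a hitting set in $\cE'$ into a hitting set in $\cE$ via replacing any subdivision vertex by an adjacent endpoint of the edge it subdivides, and $e(\cE)\leq e(\cE')$ by restricting an egg-cut $A\subseteq V(G')$ to $A\cap V(G)$ and observing that each edge of $G$ across this restricted cut corresponds to at least one edge of $G'$ across $A$. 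Hence $\|\cE\|\leq\|\cE'\|$, giving $\sn(G)\leq\sn(G')$ and completing the argument.

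The principal obstacle is the subdivision-monotonicity step. Lifting eggs so as to preserve connectivity is straightforward, but the bookkeeping for hitting sets and egg-cuts requires care: one must verify that replacing subdivision vertices in a minimum hitting set of $\cE'$ yields a genuine hitting set in $\cE$ of no larger size, and that restricting a minimum egg-cut of $\cE'$ to $V(G)$ remains a valid egg-cut in $\cE$ with at most the same number of boundary edges. A minor extra subtlety is the treatment of loops in the canonical loopless model, but each loop is resolved by inserting a single vertex, so the subdivision framework applies after only a cosmetic adjustment.
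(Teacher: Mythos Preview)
The paper does not supply its own proof of this proposition; it is quoted as \cite[Lemma 2.15]{echavarria2021scramble}, so there is no in-paper argument to compare against. Your strategy---pass to a subdivision $G'$ on which an optimal rank-one divisor is vertex-supported, invoke the finite-graph bound $\sn(G')\leq\gon(G')$ from \cite{new_lower_bound}, and then use subdivision monotonicity of scramble number---is exactly the standard route and is the one taken in \cite{echavarria2021scramble}. Indeed, the paper later explicitly cites the subdivision-monotonicity step you reprove as \cite[Proposition 4.5]{new_lower_bound}, so you could simply quote that rather than redo the egg-lifting argument. That said, your lifting argument is correct: the hitting-set replacement works because a subdivision vertex lies in a lifted egg only when both endpoints of its edge lie in the original egg, and the egg-cut restriction works because distinct edges of $G$ subdivide into edge-disjoint paths in $G'$, each of which must cross the cut at least once.

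The one place to be more careful is the sentence ``Standard identifications between metric and combinatorial divisor theory on such a refined model imply that $D$ still has rank at least one when viewed as a combinatorial divisor on $G'$.'' This is true, but it is not a triviality: one needs that for a divisor supported on the vertices of a model, the metric rank on $\Gamma$ equals the combinatorial rank on that model. This is the content of the rank-comparison theorems of Hladk\'y--Kr\'al'--Norine and Luo (via rank-determining sets or $v$-reduced divisors). Without it, one only knows that $D-v$ is \emph{metrically} equivalent to an effective divisor for each vertex $v$, and that effective representative need not be vertex-supported. A one-line citation here would close the gap cleanly.
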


\subsection{Tropical curves}
We refer to $(\R \cup \{\infty\}, \oplus, \odot)$ as the \emph{min-plus semiring}, where $a \oplus b = \min\{a,b\}$ and $a \odot b = a + b$. A \emph{tropical polynomial} is a polynomial in two variables over the min-plus semiring. In this paper, we consider tropical polynomials in two variables of the form $f(x,y) = \bigoplus_{i,j} a_{i,j} \odot x^i \odot y^j$.

The \emph{tropical curve of $f$} is the set of points in $\R^2$ where the minimum is achieved at least by two monomials. The \emph{Newton polygon} of $f$ is the convex hull of the set $\{(i,j) \in \Z^2: a_{i,j} \neq \infty\}$. Throughout this paper, we will take any tropical curve of $f$ to be \emph{smooth}, which means $(i)$ that for all lattice points $(i,j)$ in the Newton polygon of $f$, $a_{i,j}\neq \infty$ and $(ii)$ that the regular subdivision of the Newton polygon of $f$ induced by the height function $\omega(i,j)=a_{i,j}$ is a unimodular triangulation. There is a beautiful result stating that the tropical curve of $f$ is dual to a subdivision of its Newton polygon. Such a duality is illustrated in the middle of Figure \ref{figure:skeleton}. For more information, see \cite[\S 2.2]{morrison2019tropicalgeometry}.  

    \begin{figure}[hbt]
        \centering
        \includegraphics[width=0.8\linewidth]{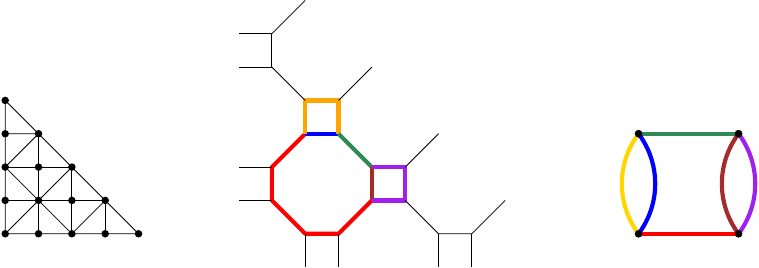}
        \caption{A triangulation of a Newton polygon, a tropical curve, and its skeleton.}
        \label{figure:skeleton}
    \end{figure}

Now, we briefly describe the process wherein a tropical curve deformation retracts to a metric graph. Let $C$ be a tropical curve and assume that the Newton polygon of $C$ has $g \geq 2$. The \emph{skeleton} of $C$ is the metric graph obtained by removing all infinite rays of $C$; retracting any leaves and their attached edges; then ``smoothing'' over resulting 2-valent vertices. The result is a connected, trivalent planar graph of genus $g$.  This is illustrated on the right in Figure \ref{figure:skeleton}.

An equivalence relation is defined on the tropical curves by identifying those that yield isomorphic skeletons. In this paper, we will refer to tropical curves and metric graphs interchangeably—as these skeletons govern the combinatorial structure of tropical curves.

We close this section with an example to tie together the many topics we have discussed.  Consider the genus $5$ polygon $P$ illustrated on the left in Figure \ref{figure:genus_5_cube_example}.  The pictured unimodular triangulation gives rise to a family of tropical curves; one such curve is illustrated in the middle of the figure.  The skeleton of this tropical curve is a metric cube graph, illustrated on the right.

\begin{figure}[hbt]
    \centering
    \includegraphics{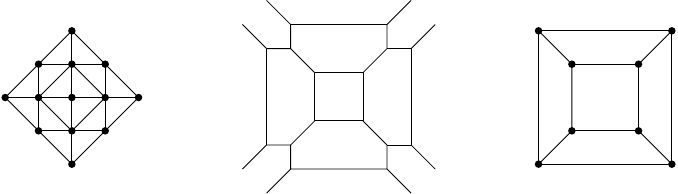}
    \caption{A polygon of genus $5$, a dual tropical curve, and the skeleton $\Gamma$}
    \label{figure:genus_5_cube_example}
\end{figure}

The interior polygon of $P$ has lattice width $2$, so $\egon(P)=4$.  We will show that the skeleton $\Gamma$ of the tropical curve does indeed have gonality equal to $4$.  Figure \ref{figure:divisor_and_scramble_cube} presents the divisor $D$ on $\Gamma$, with one chip on each of the four topological vertices of $\Gamma$; as well as a scramble on the underlying graph $G$, with four eggs, each consisting of two vertices.  We will argue that this divisor has positive rank, implying $\gon(\Gamma)\leq 4$; and that this scramble has order $4$, implying $4\leq \sn(G)$.  It will follow that $\gon(\Gamma)=4$, as expected.

\begin{figure}[hbt]
    \centering
    \includegraphics{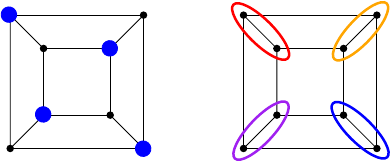}
    \caption{A divisor $D$ of degree $4$, and a scramble consisting of four eggs}
    \label{figure:divisor_and_scramble_cube}
\end{figure}

To see that the divisor has positive rank, let any $v\in \Gamma$, and let $E=v$ be an effective divisor of rank $1$.  If $v$ is in the support of $D$, then $D-E$ is effective.  Otherwise, $v$ is either a topological vertex $q$ without a chip, or lies on an edge bounded by such a $q$ and a chipped vertex.  Our equivalence relation on divisors allows us to move the three chips neighboring $q$ towards $q$, until one or more reaches it.  If a chip reached $v$ in this process, we are done.  Else, the topological line segment in which $v$ lies has a chip on either side; and these two chips can be moved together until one of them coincides with $v$.  Figure \ref{figure:cube_chip_firing}
demonstrates an example of how to move a chip to $v$, and shows a tropical rational function that exhibits the equivalence of the two divisors.  Thus $r(D)\geq 1$, and $\gon(\Gamma)\leq \deg(D)=4$.

\begin{figure}[hbt]
    \centering
    \includegraphics{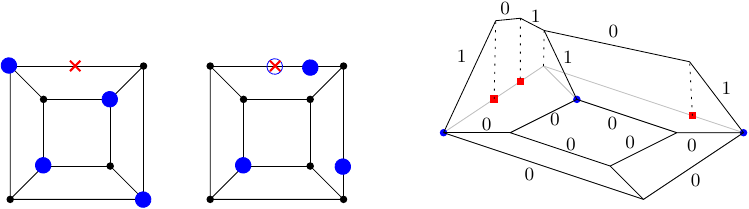}
    \caption{A divisor $D-v$, and an equivalent effective divisor, with a tropical rational function demonstrating equivalence}
    \label{figure:cube_chip_firing}
\end{figure}

Now consider the scramble $\mathcal{E}$ shown on the right in Figure \ref{figure:divisor_and_scramble_cube}\footnote{This scramble is also utilized in \cite{platonic} to show that the cube has scramble number equal to $4$.  We present a self-contained argument here for completeness.}.  Since the four eggs are disjoint, we have hitting number $h(\mathcal{E})=4$.  For the egg-cut number, we note that between any pair of eggs, there exist four pairwise edge-disjoint paths: two going around the outer cycle, and two going around the inner cycle.  Any egg-cut separating a particular pair of eggs must include an edge from each path, so $e(\mathcal{E})\geq 4$.  It follows that the scramble order $||\mathcal{E}||=4$, meaning $\sn(G)\geq 4$.  Combined with $\gon(\Gamma)\leq 4$, we have $4\leq \sn(G)\leq \gon(\Gamma)\leq 4$, so $\gon(\Gamma)=4$.

\section{Algebraic and tropical moduli spaces}\label{sec:AG}

In this section we describe moduli spaces both of algebraic curves and of tropical curves.  Although our focus is on the tropical world, much of the inspiration and several important results come from the algebraic setting.

Let $g\geq 2$.  Denote by $\mathcal{M}_g$ the moduli space of algebraic curves of genus $g$; this is a $(3g-3)$-dimensional space.  A smaller space of genus $g$ curves was introduced by \cite{castryck_voight}, who considered non-degenerate\footnote{Here non-degenerate means the polynomial has no singularities in the torus $(k^*)^2$, nor do the polynomials obtained by restricting to the terms coming from a face of the Newton polygon.} curves defined by Laurent polynomials in two variables.  The moduli space of such curves of genus $g$ is denoted $\mathcal{M}_g^\textrm{nd}$, and can be decomposed as a finite union of smaller spaces, stratified by the Newton polygon of the curve:
\[\mathcal{M}_g^\textrm{nd}=\bigcup_P \mathcal{M}_P.\]
Here the union is over all equivalence classes of lattice polygons $P$ with $g$ interior lattice points, and $\mathcal{M}_P$ is the set of all non-degenerate curves with Newton polygon $P$.

Zooming in on a particular non-degenerate curve, we find that its gonality (that is, the minimum degree of a positive rank divisor on that curve) can be computed easily from the Newton polygon.

\begin{theorem}[Corollary 6.2 in \cite{CC17}] \label{theorem:gon=egon_algebraic} Let $f\in k[x^\pm,y^\pm]$ be non-degenerate with respect to its Newton polygon $P$. Then the gonality of the curve $U(f)$ equals $\egon(P)$.
\end{theorem}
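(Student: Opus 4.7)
The strategy is to prove the two inequalities $\gon(U(f)) \leq \egon(P)$ and $\gon(U(f)) \geq \egon(P)$ separately. The upper bound I would attack constructively by exhibiting an explicit low-degree pencil; the lower bound requires a more delicate toric and Brill--Noether argument connecting the canonical system of $U(f)$ to lattice properties of $P^{(1)}$.

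For the upper bound, I would first handle the ``generic'' case $\egon(P) = \lw(P)$ guaranteed by Proposition~\ref{lwisegon}. After a $\Z$-affine change of variables---which induces an automorphism of $(k^*)^2$ and so preserves the isomorphism class of $U(f)$---arrange $P$ to sit in the strip $\{0 \leq y \leq \lw(P)\}$, touching both boundary lines. The monomial map $(x,y) \mapsto x$ extends to a morphism $\overline{U(f)} \to \mathbb{P}^1$ whose degree equals the $y$-extent of $P$, namely $\lw(P) = \egon(P)$. For the two exceptional polygons I would produce pencils directly: if $P \cong d\Sigma$, then $U(f)$ is a smooth plane curve of degree $d$, and projection from a smooth point on the curve yields a pencil of degree $d-1 = \egon(d\Sigma)$; for $P \cong 2\Upsilon$ (a genus-$4$ polygon) one can write down a degree-$3$ pencil explicitly using the symmetric structure of $P$.

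For the lower bound, suppose $U(f)$ has gonality $d$, witnessed by a base-point-free pencil $|D|$ of degree $d$ with $h^0(D) = 2$. By Riemann--Roch, $h^0(K - D) = g - d + 1$. The essential toric input is that $H^0(U(f), K)$ has a natural basis consisting of the Laurent monomials $x^i y^j$ with $(i,j) \in P^{(1)} \cap \Z^2$, since $U(f)$ sits as a nondegenerate hypersurface in the toric surface associated to $P$. The multiplication map
\[
\mu \colon H^0(D) \otimes H^0(K - D) \longrightarrow H^0(K)
\]
is then compatible with the monomial grading. I would analyze $\mu$ by tracking Newton polygons of the sections involved: each term in the image has its support in a Minkowski sum of two smaller polygons, and forcing such sums to exhaust $P^{(1)}$ yields the lattice-width bound $\lw(P^{(1)}) \leq d - 2$, i.e., $\egon(P) \leq d$. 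The exceptional polygons $d\Sigma$ and $2\Upsilon$ again require individual attention, since for them $\egon$ is one less than $\lw$.

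The main obstacle is the lower bound, specifically the step converting a pencil---which a priori need not come from a lattice (i.e., monomial) projection---into a combinatorial bound on $\lw(P^{(1)})$. For polygons such as $d\Sigma$, the minimal pencil comes from geometric projection through a point on the curve and is invisible at the level of the Newton polygon; one must genuinely pass through the canonical embedding inside the toric surface to rule out smaller-degree pencils.
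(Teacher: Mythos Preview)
The paper does not prove this statement at all: it is quoted verbatim as Corollary~6.2 of \cite{CC17} and used as a black box (e.g., in the proof of Lemma~\ref{lma:gon_at_most_egon_graphs}). There is therefore no ``paper's own proof'' to compare your proposal against.

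That said, your outline is a reasonable high-level summary of how the result is actually established in \cite{CC12,CC17}. The upper bound is exactly as you describe: fit $P$ in a strip of height $\lw(P)$ and project, with the two exceptional families handled by ad hoc pencils. The lower bound in \cite{CC17} does indeed pass through the identification of $H^0(K)$ with monomials indexed by $P^{(1)}\cap\Z^2$, but the argument is substantially more intricate than your sketch suggests. The key step is not a direct analysis of the multiplication map $\mu$ on Newton supports; rather, one shows that the scrollar invariants of the pencil $|D|$ are reflected in a particular lattice direction for $P^{(1)}$, which forces $\lw(P^{(1)})\le d-2$. Your phrase ``forcing such sums to exhaust $P^{(1)}$ yields the lattice-width bound'' hides the entire difficulty: an arbitrary $g^1_d$ has no reason to respect the torus action, so one cannot simply read off Minkowski decompositions of $P^{(1)}$ from $\mu$. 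The actual argument requires controlling how the filtration of $H^0(K)$ by vanishing order along the pencil interacts with the toric embedding, and this is where the real work in \cite{CC17} lies. You correctly flag this as the main obstacle, but the proposal as written does not contain a mechanism to overcome it.
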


The tropical analog of $\mathcal{M}_g$, denoted $\mathbb{M}_g$, encodes all metric graphs of genus $g$.  It has the structure of a stacky fan, with top dimensional cells corresponding to trivalent graphs of genus $g$ (with loops and multi-edges allowed).  These cells are glued according to a poset structure determined by which graphs become identical when edge lengths shrink to $0$; see \cite{tropical_torelli} and \cite{MR2968636} for more details.  Much like $\mathcal{M}_g$, this space is $(3g-3)$-dimensional, as can be seen by counting the edges of a trivalent graph.

The authors in \cite{brodsky_joswig_morrison_sturmfels} introduced a tropical analog of $\mathcal{M}_g^\textrm{nd}$ as a subset of $\mathbb{M}_g$, consisting of all metric graphs that arise as the skeleton of a smooth plane tropical curve of genus $g$.  We denote the space of such metric graphs as $\mathbb{M}_g^\textrm{nd}$. (The original paper \cite{brodsky_joswig_morrison_sturmfels} instead used $\mathbb{M}_g^\textrm{planar}$ to denote this space; more recent works have used $\mathbb{M}_g^\textrm{nd}$ to emphasize the relationship with $\mathcal{M}_g^\textrm{nd}$.)

As in the algebraic case, $\mathbb{M}_g^\textrm{nd}$ admits a decomposition into smaller spaces.  First we may write
\[\mathbb{M}_g^\textrm{nd} = \bigcup_P \mathbb{M}_P,\]
where $\mathbb{M}_P$ denotes the set of all metric graphs arising from tropical curves with Newton polygon $P$. This union is over all lattice polygons $P$ with $g$ interior points; since two equivalent lattice polygons give rise to the same metric graphs, we may choose one polygon from each equivalence class, making the union finite. We can be even more economical by restricting to lattice polygons that are maximal with respect to inclusion among polygons with $g$ interior lattice points; see \cite[Lemma 2.6]{brodsky_joswig_morrison_sturmfels}. 

We can decompose $\mathbb{M}_P$ further, into a finite union over all regular unimodular triangulations $\Delta$ of $P$:
\[\mathbb{M}_P=\bigcup_{\Delta}\mathbb{M}_\Delta.\]
Here $\mathbb{M}_\Delta$ denotes the moduli space of all metric graphs arising from smooth tropical curves dual to the particular triangulation $\Delta$ of $P$. Section 3 in \cite{brodsky_joswig_morrison_sturmfels} interprets $\mathbb{M}_\Delta$ as the image of the secondary cone of $\Delta$ under a certain linear map.

We now define two loci within $\mathbb{M}^{\textrm{nd}}_{g}$ based on expected gonality and gonality. First, we consider the locus of curves whose expected gonality is $d$. Recall that for most polygons, the expected gonality is read off as $\egon(P) = \lw(P^{(1)}) + 2$. Let $\mndexp{g}{d}$ be the locus within $\mathbb{M}_g^\textrm{nd}$ of metric graphs with genus $g$ that are the skeleton of some smooth plane tropical curve whose Newton polygon has lattice width $d$: 
\[\mathbb{M}_{g,\underline{d}}^{\textrm{nd}}=\!\!\!\!\!\!\!\!\bigcup_{\stackrel{g(P)=g}{\textrm{egon}(P)=d}} \!\!\!\!\!\!\!\mathbb{M}_P.\]

Since this space is a finite union of polyhedral cones, we define its dimension to be the maximum dimension of its constituent cones:
\[\dmndexp{g}{d} = \!\!\!\!\max_{\stackrel{g(P)=g}{\textrm{egon}(P)=d}} \!\!\!\dim\left(\modspace{P}\right).\]
Alternatively, we can consider the locus of genus $g$ graphs in $\mathbb{M}_g^{\textrm{nd}}$ with a prescribed gonality $d$, denoted $\mnd{g}{d}$:
\[\mathbb{M}_{g,d}^{\textrm{nd}}=\{\Gamma\in \mathbb{M}_g^{\textrm{nd}}\,|\, \gon(\Gamma)=d\}.\]
The geometric structure of $\mnd{g}{d}$ is an open question, and it is unknown if it is a union of polyhedral cones. We may still consider $\dim\bigl(\mnd{g}{d}\bigr)$, for instance, as the maximum dimension of a ball contained in $\mnd{g}{d}$.

\begin{lemma}\label{lma:gon_at_most_egon_graphs} Let $\Gamma$ be the skeleton of a smooth plane tropical curve with Newton polygon $P$. Then $\gon(\Gamma)\leq \egon(P)$.
\end{lemma}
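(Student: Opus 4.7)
The plan is to construct a rank-$1$ divisor of degree $\egon(P)$ on $\Gamma$ by projecting the ambient tropical curve onto a coordinate axis. After a $\Z$-affine transformation we may assume $P \subset \{0 \leq y \leq \lw(P)\}$, so that $P$ realizes its lattice width in the vertical direction. For a generic $c \in \R$, let $D_c$ be the divisor cut out on the tropical curve by the vertical line $\{x = c\}$ with tropical intersection multiplicities; a standard tropical Bezout-type computation (the mixed volume of $P$ with the horizontal lattice segment) gives $\deg D_c = \lw(P)$.

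Chips of $D_c$ that land on unbounded rays can be chip-fired back to the points where those rays attach to $\Gamma$, producing an equivalent divisor $\overline{D}_c$ supported entirely on the skeleton and of the same degree. As $c$ varies, the divisors $\overline{D}_c$ form a single linear equivalence class, since sliding the intersection points along the tropical curve is realized by explicit tropical rational functions. Hence for any $v \in \Gamma$, taking $c = x(v)$ places a chip at $v$, which shows that $\overline{D}_c - v$ is linearly equivalent to an effective divisor; so $r(\overline{D}_c) \geq 1$, and therefore $\gon(\Gamma) \leq \lw(P)$.

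By Proposition~\ref{lwisegon}, $\lw(P) = \egon(P)$ outside the exceptional families $P \cong d\Sigma$ (with $d \geq 2$) and $P \cong 2\Upsilon$, so in those generic cases we conclude $\gon(\Gamma) \leq \egon(P)$ immediately. In the exceptional cases one has $\lw(P) = \egon(P) + 1$ and the projection bound is off by one; to obtain the tight bound I would invoke Baker's specialization lemma together with Theorem~\ref{theorem:gon=egon_algebraic}. Namely, a generic non-archimedean lift of $f$ yields a non-degenerate algebraic curve of Newton polygon $P$ whose gonality equals $\egon(P)$, and the specialization of a degree-$\egon(P)$ pencil produces a divisor on $\Gamma$ of the same degree and rank at least $1$.

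The main technical obstacle is closing this one-unit gap in the exceptional cases by purely combinatorial means. Doing so would require a tropical analogue of the classical trick of projecting a smooth plane curve of degree $d$ from a point on the curve itself to obtain a pencil of degree $d - 1$; the highly symmetric structure of unimodular triangulations of $d\Sigma$ and $2\Upsilon$ suggests that such a direct construction should be feasible, but it requires more care than the generic lattice-width projection.
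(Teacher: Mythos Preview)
Your approach is correct but takes a genuinely different route from the paper's.  The paper gives a single uniform argument via algebraic geometry: lift the tropical polynomial to a non-degenerate algebraic curve, apply Theorem~\ref{theorem:gon=egon_algebraic} to obtain gonality $\egon(P)$, invoke Baker's specialization lemma to bound the gonality of the Berkovich skeleton, and then use \cite[Corollary~5.28]{bpr} (faithful tropicalization) to identify that skeleton with $\Gamma$.  You instead construct an explicit rank-$1$ divisor of degree $\lw(P)$ by slicing with vertical lines, which handles every polygon except $P\cong d\Sigma$ and $P\cong 2\Upsilon$; for those exceptional cases you fall back on exactly the specialization argument the paper uses throughout.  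Your projection argument is more elementary and actually exhibits the divisor, but since the exceptional cases force you to invoke specialization anyway, the paper's route is shorter and case-free.

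One omission in your exceptional-case argument is worth flagging: you say the specialized pencil lands ``on $\Gamma$,'' but Baker's specialization produces a divisor on the Berkovich skeleton of the lifted curve, and identifying that skeleton with the tropical-curve skeleton $\Gamma$ is exactly the content of \cite[Corollary~5.28]{bpr}, which the paper cites explicitly and you should as well.
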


\begin{proof} The case where the metric graph has genus $0$ (forcing gonality $1$ and expected gonality $1$) or genus $1$ (forcing gonality $2$ and expected gonality $2$) are handled immediately.  For $g\geq 2$, we
   let $F$ denote the tropical polynomial defining the tropical curve, and let $f\in k[x^\pm,y^\pm]$ be a polynomial tropicalizing to $F$.  Since the tropical curve is smooth, we know that $U(f)$ is non-degenerate.  By Theorem \ref{theorem:gon=egon_algebraic}, $U(f)$ has gonality $\egon(P)$.  By \cite[Corollary 3.2]{baker}, the metric graph $\Gamma'$ associated to $U(f)$ has gonality at most $\egon(P)$.  By \cite[Corollary 5.28]{bpr}, $\Gamma$ is a faithful representation of $\Gamma'$, and so has the same gonality.  Thus $\gon(\Gamma)\leq \egon(P)$.
\end{proof}

For sufficiently small genus and expected gonality, we can strengthen this result to equality. Theorem~\ref{thm:gon=egon_small_egon} will show that $\egon(P) \leq 3$. In Appendix~\ref{sec: tropical_Appendix}, we show the equality for $g \leq 4$ (including $g=0, 1$). We do not know of any smooth plane tropical curve with Newton polygon $P$ and skeleton $\Gamma$ such that $\gon(\Gamma)<\egon(P)$; indeed, the content of Conjecture \ref{conj:gon=egon} is that no such tropical curve exists.

Before proceeding, we present a slight reformulation a result from \cite{tropical_hyperelliptic_curves_in_the_plane}:

\begin{theorem}\label{thm:ralph_hyperelliptic}  Let $g\geq 1$, and suppose $P$ is a hyperelliptic polygon of genus $g$ and $\Gamma$ is the skeleton of any smooth tropical curve with Newton polygon $P$. Then, $P$ is hyperelliptic if and only if $\Gamma$ is hyperelliptic (that is, $\gon(\Gamma) = 2$).
\end{theorem}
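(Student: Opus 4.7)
The plan is to prove the biconditional by treating the two directions separately; the forward implication follows essentially immediately from results already established in this section, while the reverse implication is a repackaging of the main theorem of \cite{tropical_hyperelliptic_curves_in_the_plane}.

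For the forward direction (hyperelliptic polygon implies hyperelliptic skeleton), I would observe that if $P$ is hyperelliptic, then $\dim P^{(1)} \leq 1$, so by the defining cases of expected gonality (using the convention that a degenerate segment or point has lattice width $0$) we have $\egon(P) = 2$. Lemma \ref{lma:gon_at_most_egon_graphs} then yields $\gon(\Gamma) \leq 2$. Since $g \geq 1$, the skeleton $\Gamma$ contains a cycle, and any metric graph containing a cycle has gonality at least $2$; hence $\gon(\Gamma) = 2$.

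For the reverse direction I would prove the contrapositive: if $P$ is non-hyperelliptic, so $\dim P^{(1)} = 2$, then $\gon(\Gamma) \geq 3$. This is exactly the content of the main theorem of \cite{tropical_hyperelliptic_curves_in_the_plane}, which classifies all smooth tropical plane curves with hyperelliptic skeletons and shows that their Newton polygons are precisely the hyperelliptic polygons. The remaining work is a short ``dictionary'' check: verifying that our definition of a hyperelliptic polygon (via $\dim P^{(1)} \leq 1$) matches the class of polygons singled out in that paper, and that hyperellipticity of a metric graph in the divisorial sense used here coincides with the equivalent formulations (e.g., admitting a degree-$2$ harmonic morphism to a tree, \cite[Theorem 1.3]{Cha13}) that appear there.

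The main obstacle, were one to attempt a self-contained proof rather than invoking \cite{tropical_hyperelliptic_curves_in_the_plane}, lies entirely in this reverse direction: establishing $\gon(\Gamma) \geq 3$ uniformly across all regular unimodular triangulations of every non-hyperelliptic polygon. A natural route is via the scramble-number lower bound of Proposition \ref{prop:snlowerbound}: one would try to exploit the two-dimensional structure of $P^{(1)}$ to construct a scramble of order at least $3$ on the dual skeleton, for instance by exhibiting two sufficiently ``edge-disjoint'' cycles forced by the geometry of the triangulation. The delicate aspect would be handling small polygons and atypical triangulations without sinking into heavy case analysis.
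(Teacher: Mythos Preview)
Your proposal is correct and takes essentially the same approach as the paper: one direction is cited from \cite[Theorem~1.1]{tropical_hyperelliptic_curves_in_the_plane}, and the other is obtained from $\egon(P)=2$, Lemma~\ref{lma:gon_at_most_egon_graphs}, and the fact that a positive-genus metric graph has gonality at least $2$. The only differences are cosmetic: the paper labels the cited direction as ``forward'' (opposite to your labeling) and splits the direct argument into the subcases $g=1$ and $g\geq 2$ (invoking Riemann--Roch for ``gonality $1$ iff tree''), whereas you treat $g\geq 1$ uniformly via the cycle observation.
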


\begin{proof}
    The forward statement is directly given by \cite[Theorem 1.1]{tropical_hyperelliptic_curves_in_the_plane}. Suppose $P$ is a hyperelliptic polygon and $\Gamma$ any smooth curve with Newton polygon $P$. If $g(P) \geq 2$, then by Lemma~\ref{lma:gon_at_most_egon_graphs}, $\gon(\Gamma) \leq 2$. However, a quick corollary of the tropical Riemann-Roch theorem~\cite{bn, GK08} is that a metric graph has gonality $1$ if and only if it is a tree. So $\gon(\Gamma) = 2$. If $g(P) = 0$, then $P$ fails to be hyperelliptic because its interior $P^{(1)}$ is empty. If $g(P) = 1$, then the skeleton of $\Gamma$ must be a cycle, which has gonality $2$.
    \end{proof}
    

This allows us to obtain an equality when our polygon has low expected gonality.

\begin{theorem}\label{thm:gon=egon_small_egon}
    Let $\Gamma$ be the skeleton of a smooth plane tropical curve with Newton polygon $P$ with $\egon(P) \leq 3$. Then,
    \[\gon(\Gamma) = \egon(P).\]
\end{theorem}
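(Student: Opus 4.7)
The plan is to perform a case analysis on $\egon(P)\in\{1,2,3\}$, combining the universal upper bound $\gon(\Gamma)\leq\egon(P)$ from Lemma \ref{lma:gon_at_most_egon_graphs} with the biconditional for the hyperelliptic case furnished by Theorem \ref{thm:ralph_hyperelliptic}.

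For $\egon(P)=1$, the definition forces $P^{(1)}=\emptyset$, so $g(P)=0$ and the associated skeleton $\Gamma$ is a tree; by tropical Riemann--Roch, $\gon(\Gamma)=1$. For $\egon(P)=2$, the definition forces $P^{(1)}$ to be a point or segment, so $\dim(P^{(1)})\leq 1$, i.e.\ $P$ is hyperelliptic; Theorem \ref{thm:ralph_hyperelliptic} then immediately yields $\gon(\Gamma)=2$.

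The only case requiring a bit more care is $\egon(P)=3$. The upper bound $\gon(\Gamma)\leq 3$ is Lemma \ref{lma:gon_at_most_egon_graphs}, so I would focus on establishing the matching lower bound. Whether $\egon(P)=3$ arises from the generic formula $\lw(P^{(1)})+2=3$ (which forces $P^{(1)}$ to be $2$-dimensional) or from the exceptional case $P\cong 2\Upsilon$ (where $P^{(1)}\cong\Upsilon$ is also $2$-dimensional), we always have $\dim(P^{(1)})=2$, so $P$ is non-hyperelliptic and $g(P)\geq 3$. Consequently $\Gamma$ is not a tree, so $\gon(\Gamma)\neq 1$; and if $\gon(\Gamma)=2$ then $\Gamma$ would be hyperelliptic, which by the reverse direction of Theorem \ref{thm:ralph_hyperelliptic} would force $P$ to be hyperelliptic, a contradiction. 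Hence $\gon(\Gamma)\geq 3$, and combining with the upper bound gives equality.

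I do not anticipate any serious obstacle, since the theorem is essentially a careful deployment of the two tools already established earlier in this section; the only subtlety worth double-checking is that the exceptional polygon $2\Upsilon$ indeed has a $2$-dimensional interior (and is therefore non-hyperelliptic), which is immediate from the coordinates of its vertices.
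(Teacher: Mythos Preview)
Your proposal is correct and follows essentially the same three-case analysis as the paper's proof, combining Lemma~\ref{lma:gon_at_most_egon_graphs} for the upper bound with Theorem~\ref{thm:ralph_hyperelliptic} and the tropical Riemann--Roch characterization of gonality~$1$ for the lower bounds. Your treatment is in fact slightly more explicit than the paper's, since you separately verify that the exceptional polygon $2\Upsilon$ has $2$-dimensional interior, whereas the paper simply asserts that $P$ has a ``non-degenerate interior'' in the $\egon(P)=3$ case.
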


\begin{proof}
    Again, we use that the  tropical Riemann-Roch theorem~\cite{bn, GK08} implies that a metric graph has gonality $1$ if and only if it is a tree. We stratify by expected gonality of $P$:
    \begin{itemize}
        \item \underline{Case 1: $\egon(P) = 1$.}  Let $\Gamma$ be the skeleton of a tropical curve arising from a polygon $P$ with $\egon(P) = 1$. Recall that $\egon(P)$ is defined to be $1$ when $P^{(1)}$ is empty. Therefore, $g(P) = 0$, and $\gon(\Gamma)  = 1$. 
        \item \underline{Case 2: $\egon(P) = 2$.} Let $\Gamma$ be the skeleton of a tropical curve arising from a polygon $P$ with $\egon(P) = 2$. By Theorem~\ref{thm:ralph_hyperelliptic}, $\gon(\Gamma) = 2$.
        \item \underline{Case 3: $\egon(P) = 3$.} Let $\Gamma$ be the skeleton of a tropical curve arising from a polygon $P$ with $\egon(P) = 3$. Then, $P$ has a non-degenerate interior. By Theorem~\ref{thm:ralph_hyperelliptic}, $\gon(\Gamma) \neq 2$. Because genus is positive, $\gon(\Gamma) \neq 1$ as well. By Lemma~\ref{lma:gon_at_most_egon_graphs}, $\gon(\Gamma) \leq \egon(P) =  3$. Thus, $\gon(\Gamma) = 3$.
    \end{itemize}
    \end{proof}

We may also slightly reformulate the above theorems in the language of moduli spaces:

\begin{theorem} \label{thm:d_is_2}
Fix any $g$. Then, the equality $\mnd{g}{2} = \mndexp{g}{2}$ holds.
\end{theorem}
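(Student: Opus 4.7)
The plan is to recognize that this equality is essentially a direct translation of Theorem~\ref{thm:ralph_hyperelliptic} into the language of moduli spaces; once the definitions of $\mnd{g}{2}$ and $\mndexp{g}{2}$ are unpacked, both inclusions fall out immediately.

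First, I would dispose of the degenerate small-genus cases.  For $g=0$, every genus-$0$ polygon has empty interior, so $\egon(P)=1$ and the union defining $\mndexp{0}{2}$ is empty; any element of $\mathbb{M}_0^{\textrm{nd}}$ is a trivial graph of gonality $1$, so $\mnd{0}{2}=\emptyset=\mndexp{0}{2}$.  For $g=1$, every genus-$1$ polygon has a single interior point, hence is hyperelliptic with $\egon(P)=2$, and the skeleton of any smooth plane tropical curve of genus $1$ is a cycle of gonality $2$, so both loci equal $\mathbb{M}_1^{\textrm{nd}}$.

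For the main case $g\geq 2$, I would prove the two set-theoretic inclusions separately.  For $\mndexp{g}{2}\subseteq\mnd{g}{2}$, take $\Gamma\in\modspace{P}$ with $g(P)=g$ and $\egon(P)=2$; since $g(P)\geq 2$, the definition of $\egon$ forces $P^{(1)}$ to be a degenerate line segment with at least two collinear interior lattice points, so $P$ is hyperelliptic.  Theorem~\ref{thm:ralph_hyperelliptic} then gives $\gon(\Gamma)=2$, so $\Gamma\in\mnd{g}{2}$.  For the reverse inclusion $\mnd{g}{2}\subseteq\mndexp{g}{2}$, take $\Gamma\in\mnd{g}{2}$; by definition $\Gamma\in\modspace{P}$ for some maximal genus-$g$ polygon $P$, and applying Theorem~\ref{thm:ralph_hyperelliptic} in the other direction forces $P$ to be hyperelliptic.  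Hence $\egon(P)=2$, and $\Gamma\in\mndexp{g}{2}$.

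There is no genuine obstacle here: once one observes that, for $g\geq 2$, the condition $\egon(P)=2$ coincides exactly with $P$ being hyperelliptic, the whole statement is a formal consequence of the biconditional supplied by Theorem~\ref{thm:ralph_hyperelliptic}.
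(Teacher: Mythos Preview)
Your proposal is correct and follows exactly the approach the paper takes: the paper's entire proof is the single sentence ``This follows directly from Theorem~\ref{thm:ralph_hyperelliptic},'' and your argument simply unpacks that implication by checking both inclusions and handling the trivial small-genus cases explicitly. The only superfluous word is ``maximal'' in your reverse inclusion (any $P$ with $\Gamma\in\mathbb{M}_P$ works, maximal or not), but this does not affect the argument.
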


\begin{proof}
    This follows directly from Theorem~\ref{thm:ralph_hyperelliptic}.
\end{proof}

\begin{theorem} \label{thm:d_is_3}
The inclusion $\mndexp{g}{3} \subseteq \mnd{g}{3}$ holds.
\end{theorem}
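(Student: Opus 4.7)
The plan is to unwind the definitions of the two loci and invoke the case $\egon(P)=3$ of Theorem~\ref{thm:gon=egon_small_egon} curve-by-curve. To begin, let $\Gamma$ be an arbitrary element of $\mndexp{g}{3}$. By the defining decomposition
\[
\mndexp{g}{3} \;=\; \bigcup_{\substack{g(P)=g \\ \egon(P)=3}} \mathbb{M}_P,
\]
I can fix a lattice polygon $P$ with $g(P)=g$ and $\egon(P)=3$ such that $\Gamma \in \mathbb{M}_P$. By the definition of $\mathbb{M}_P$, this means that $\Gamma$ arises as the skeleton of some smooth plane tropical curve whose Newton polygon is $P$.

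Next I apply Theorem~\ref{thm:gon=egon_small_egon} to this pair $(\Gamma,P)$. Since $\egon(P)=3$, the theorem yields the per-curve equality $\gon(\Gamma)=\egon(P)=3$. In particular $\Gamma$ belongs to the gonality locus $\mnd{g}{3}$. Since $\Gamma$ was arbitrary, the desired inclusion $\mndexp{g}{3}\subseteq \mnd{g}{3}$ follows.

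There is essentially no real obstacle to overcome here, as the substantive work has already been done in Theorem~\ref{thm:gon=egon_small_egon} (whose proof in turn uses Lemma~\ref{lma:gon_at_most_egon_graphs}, Theorem~\ref{thm:ralph_hyperelliptic}, and tropical Riemann--Roch). The only thing to verify is the translation from per-curve statements to the moduli-theoretic language, which is immediate from the defining union. Note also that this argument does \emph{not} upgrade to an equality the way Theorem~\ref{thm:d_is_2} does: the reverse inclusion would require ruling out the possibility that a tropical curve with $\gon(\Gamma)=3$ arises from a Newton polygon with $\egon(P)\geq 4$, which is precisely what Conjecture~\ref{conj:gon=egon} predicts but which is not known in general.
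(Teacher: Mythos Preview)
Your proof is correct and follows exactly the same approach as the paper's proof, which simply invokes the $\egon(P)=3$ case of Theorem~\ref{thm:gon=egon_small_egon}. You have merely spelled out in detail the unwinding of definitions that the paper leaves implicit.
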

\begin{proof}
    This follows from the $\egon(P) = 3$ case of Theorem~\ref{thm:gon=egon_small_egon}.
\end{proof}

We conclude by describing some prior work on the dimension of the tropical moduli spaces outlined above. Let $P$ be a polygon. In \cite{fixed_newton_polyon}, the authors develop the following formula for $\dim(\modspace{P})$:

\begin{lemma}[Theorem 1.4, \cite{fixed_newton_polyon}]\label{lma:column_vector_formula}
    Let $P$ be a maximal non-hyperelliptic polygon with $g(P)$ interior lattice points, $r(P)$ boundary lattice points, and $c(P)$ column vectors. Then,
    \[\dim(\modspace{P}) = g(P) + r(P)  - 3 - c(P).\]
\end{lemma}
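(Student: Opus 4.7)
The strategy is to analyze $\modspace{P}$ one regular unimodular triangulation $\Delta$ of $P$ at a time, using the decomposition $\modspace{P} = \bigcup_\Delta \modspace{\Delta}$, and then maximize. Following \cite{brodsky_joswig_morrison_sturmfels}, each $\modspace{\Delta}$ is the image of the open secondary cone $\sigma_\Delta \subseteq \mathbb{R}^{P \cap \mathbb{Z}^2}$ under a linear map $L_\Delta$ that records the skeleton edge lengths as linear functionals of the height vector $\omega$. Since $\sigma_\Delta$ is full-dimensional, $\dim \modspace{\Delta} = (g(P) + r(P)) - \dim \ker L_\Delta$, and the desired formula reduces to showing that (i) $\dim \ker L_\Delta \geq 3 + c(P)$ for every $\Delta$, and (ii) equality is achieved for some $\Delta$.

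For part (i), I would first identify the three-dimensional lineality subspace of affine height functions $(i,j) \mapsto ai + bj + c$, which lies in every $\ker L_\Delta$ because adding an affine function simply translates the graph of $\omega$ in $\mathbb{R}^3$ and leaves the induced tropical metric untouched. Next, each column vector $v$ of $P$ provides a further independent kernel direction: if $v$ is witnessed by the edge $\tau \subseteq P$, then the containment $v + ((P \setminus \tau) \cap \mathbb{Z}^2) \subseteq P$ permits a piecewise-affine height perturbation that rigidly translates the lattice points off $\tau$ by $v$ while fixing those on $\tau$. The geometric effect on the dual tropical curve is to slide a slab parallel to itself, which does not alter any bounded edge length. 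Linear independence of these $c(P)$ directions from each other and from the affine span can be verified by tracking a convenient boundary coordinate under each deformation and appealing to the fact that distinct column vectors point in distinct directions relative to their witness edges.

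The heart of the proof, and the main obstacle, is part (ii): exhibiting a regular unimodular triangulation $\Delta$ whose kernel is exactly $3 + c(P)$ dimensional. The plan is to choose $\Delta$ refining the column-vector slab structure of $P$ in a sufficiently ``transversal'' way and then check by direct linear algebra that, modulo the affine and column-vector directions, the map $L_\Delta$ sends the remaining $g(P) + r(P) - 3 - c(P)$ height coordinates to linearly independent edge lengths. The input here is the explicit formula expressing each skeleton edge length as an integer linear combination of the heights at the four vertices of the two unimodular triangles flanking the dual edge. The hypotheses enter decisively: maximality of $P$ (so that $P = (P^{(1)})^{(-1)}$) controls the geometry of the boundary slabs, and non-hyperellipticity (so that $P^{(1)}$ is two-dimensional) rules out degenerate linear relations that would otherwise arise when the interior collapses to a segment. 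Taking the maximum of $\dim \modspace{\Delta}$ over all $\Delta$ then yields the stated formula.
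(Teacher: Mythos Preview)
Your overall framework is sound and matches the strategy in \cite{fixed_newton_polyon}: decompose $\modspace{P}=\bigcup_\Delta\modspace{\Delta}$, write $\dim\modspace{\Delta}=|P\cap\Z^2|-\dim\ker L_\Delta$ using that the secondary cone is full-dimensional, and then separately establish the upper bound (kernel large for every $\Delta$) and the lower bound (some $\Delta$ attains it). The $3$-dimensional space of affine height functions is correctly identified as lying in every $\ker L_\Delta$.

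There are, however, genuine gaps in both halves. In part~(i), your description of the extra kernel direction attached to a column vector $v$ does not typecheck: a kernel element is a function $\eta:P\cap\Z^2\to\R$, whereas you describe an operation that ``rigidly translates the lattice points off $\tau$ by $v$.'' If what you intend is a slab-slide on the dual tropical curve, note that the height perturbation realizing such a slide is \emph{piecewise} affine with break locus depending on $\Delta$; you have not exhibited a single vector $\eta_v\in\R^{P\cap\Z^2}$ lying in $\ker L_\Delta$ for \emph{every} $\Delta$, nor argued that the $c(P)$ directions so produced are independent modulo the affine subspace. Without this, the inequality $\dim\ker L_\Delta\geq 3+c(P)$ is unproven. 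In part~(ii), ``sufficiently transversal'' is not a construction. As the paper records, the argument in \cite{fixed_newton_polyon} rests on an explicit class of regular unimodular triangulations called \emph{beehive triangulations} (see the definition preceding Lemma~\ref{lma:beehive}): they contain all boundary edges of $P^{(1)}$, join each vertex $v_i$ of $P^{(1)}$ to its relaxed counterpart $v_i^{(-1)}$, and, for each $i$, maximize the number of points on $\tau_i$ connected to at least two points on $\tau_i^{(-1)}$. Showing that a regular beehive $\Delta$ has $\dim\ker L_\Delta=3+c(P)$ exactly is the substantive combinatorial work, and it is precisely there---not merely in ruling out degeneracies---that maximality (so $P=(P^{(1)})^{(-1)}$) and non-hyperellipticity (so $P^{(1)}$ is two-dimensional) are actually exploited.
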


The authors of \cite{fixed_newton_polyon} prove this theorem by identifying a class of triangulations of $P$ called beehive triangulations. The moduli space associated with these beehive triangulations are top-dimensional; in other words, these are triangulations $\Delta$ of polygons $P$ achieve $\dim(\modspace{\Delta}) = \dim(\modspace{P})$. 

A unimodular triangulation $\Delta$ of $P$ is a \emph{beehive triangulation} of $P$ if
\begin{enumerate}
    \item $\Delta$ includes all boundary edges of $P^{(1)}$;
    \item $v_i$ is connected to $v_i^{(-1)}$ for all $i$;
    \item for each $i$, the number of lattice points on $\tau_i$ connected to at least two lattice points on $\tau^{(-1)}_i$ is maximized.
\end{enumerate}

\begin{lemma}[Lemma 3.7, \cite{fixed_newton_polyon}]\label{lma:beehive}
    Let $\Delta$ be a regular beehive triangulation of a maximal non-hyperelliptic polygon $P$. Then, 
    \[\dim(\modspace{\Delta}) = \dim(\modspace{P}).\]
\end{lemma}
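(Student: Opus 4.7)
The plan is to realize $\modspace{\Delta}$ as the image of the secondary cone of $\Delta$ (which sits inside $\mathbb{R}^{|P\cap\mathbb{Z}^2|}$, a space of dimension $g(P)+r(P)$) under the linear \emph{length map} $L_{\Delta}$ that records the lengths of the bounded edges of the dual tropical curve. Then
\[
\dim(\modspace{\Delta}) = (g(P)+r(P)) - \dim(\ker L_\Delta),
\]
where $\ker L_\Delta$ is computed on the affine span of the secondary cone. In view of Lemma~\ref{lma:column_vector_formula}, the target inequality $\dim(\modspace{\Delta})\geq \dim(\modspace{P})$ is equivalent to $\dim(\ker L_\Delta)\leq 3+c(P)$, and the opposite inequality $\dim(\ker L_\Delta)\geq 3+c(P)$ holds for every regular unimodular triangulation: three dimensions come from globally affine reheightings $\omega(i,j)=\alpha i+\beta j+\gamma$, which do not alter the subdivision or any dual edge length, and one further dimension comes from each column vector $v$ of $P$, which yields a one-parameter family of reheightings supported on a \emph{column} of lattice points parallel to $v$ that preserves every edge length. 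The real content of the lemma is thus the matching upper bound on $\dim(\ker L_\Delta)$ for beehive $\Delta$.

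The strategy for the upper bound is to show constructively that every $\omega\in\ker L_\Delta$ is an affine function plus a $\mathbb{Z}$-linear combination of column-vector reheightings. After subtracting an affine function one may normalize three of the heights. Beehive condition (1) forces $\Delta$ to contain every boundary edge of $P^{(1)}$, so restricting $\omega$ to $P^{(1)}\cap \mathbb{Z}^2$ gives an element of the analogous kernel for the interior polygon; by reusing the column-vector analysis on $P^{(1)}$ one concludes that all interior heights are determined modulo the column vectors of $P^{(1)}$. Beehive condition (2), requiring the edge $v_i v_i^{(-1)}$ to appear in $\Delta$ for every vertex $v_i$ of $P$, rigidly ties each vertex height $\omega(v_i)$ to $\omega(v_i^{(-1)})$, pinning the vertex heights of $P$ to the now-determined interior heights. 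Finally, beehive condition (3), maximizing the number of lattice points on each boundary edge $\tau_i$ that are connected to at least two points on $\tau_i^{(-1)}$, forces the remaining non-vertex boundary heights on $\tau_i$ to be determined modulo precisely those column vectors of $P$ whose associated facet is $\tau_i$. Chaining these three steps shows that, after the affine normalization, the residual freedom in $\omega$ is spanned by the column vectors of $P$, giving $\dim(\ker L_\Delta)\leq 3+c(P)$ as required.

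The main obstacle is step three. Each \emph{beehive triangle} with two vertices on $\tau_i^{(-1)}$ and one on $\tau_i$ contributes a linear equation among three heights, and one must verify that for a triangulation maximizing the number of such triangles along $\tau_i$, the resulting linear system has exactly the right rank: large enough to determine every non-vertex height on $\tau_i$ modulo the column vectors of $P$ anchored at $\tau_i$, but no larger. Making this rank computation precise requires an edge-by-edge combinatorial accounting along $\partial P$, and the regularity of $\Delta$ must be used to ensure that the secondary cone meets $\ker L_\Delta$ in its relative interior, so that the bound on kernel dimension translates into a bound on the dimension of $\modspace{\Delta}$ rather than of some lower-dimensional boundary stratum.
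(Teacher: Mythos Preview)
The paper does not give its own proof of this lemma: it is quoted verbatim as Lemma~3.7 of \cite{fixed_newton_polyon} and used as a black box. So there is no in-paper argument to compare against; the relevant comparison is with the proof in the cited reference.

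Your overall framework is the right one and is indeed the framework of \cite{fixed_newton_polyon}: one realizes $\mathbb{M}_\Delta$ as the image of the (full-dimensional) secondary cone under a linear length map, so that $\dim(\mathbb{M}_\Delta)=g(P)+r(P)-\dim(\ker L_\Delta)$, and the task becomes showing $\dim(\ker L_\Delta)=3+c(P)$ for beehive $\Delta$. The inequality $\dim(\ker L_\Delta)\geq 3+c(P)$, valid for every regular unimodular $\Delta$, gives the general upper bound $\dim(\mathbb{M}_P)\leq g+r-3-c(P)$, and beehive triangulations witness equality.

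Where your proposal is thin is in the two places you yourself flag. First, the sentence ``one further dimension comes from each column vector $v$ of $P$, which yields a one-parameter family of reheightings supported on a column of lattice points parallel to $v$'' is not quite the construction; the kernel vector associated to a column vector is not supported on a single column of lattice points, and you should write it down explicitly and check linear independence from the affine functions and from the other column-vector vectors. Second, your inductive step ``restricting $\omega$ to $P^{(1)}$ and reusing the column-vector analysis on $P^{(1)}$'' presupposes that (a) the restriction of a beehive $\Delta$ to $P^{(1)}$ has enough structure for the same argument to apply, and (b) that the column vectors of $P^{(1)}$ are accounted for by those of $P$; neither is automatic, and the proof in \cite{fixed_newton_polyon} does not proceed by this kind of induction on interior polygons. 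The actual argument there works edge-by-edge along $\partial P$, directly analyzing the linear constraints imposed by the beehive triangles over each $\tau_i$ and tallying the residual degrees of freedom against the column vectors anchored at $\tau_i$. Your last paragraph gestures at this, but stops short of doing it; that edge-by-edge accounting is where the proof lives.
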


\section{A lower bound on $\dmnd{g}{d}$ for large genus}\label{sec:lower}

    In this section, we build towards a proof of Theorem~\ref{tmd:equaldim} by proving the following result:

    \begin{proposition}
        \label{prop:equaldimgreater}
        Fix $d \geq 3$. For $g \geq d^3$,  
        \[\dim\Bigl(\mnd{g}{d}\Bigr) \geq \dmndexp{g}{d}.\]
    \end{proposition}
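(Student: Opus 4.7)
The plan is to produce, for each genus $g \geq d^3$, an explicit maximal lattice polygon $P$ with $g(P)=g$ and $\egon(P)=d$, together with a regular beehive triangulation $\Delta$ of $P$, so that the associated piece $\mathbb{M}_\Delta \subseteq \mathbb{M}_P$ simultaneously (a) has dimension equal to $\dmndexp{g}{d}$, and (b) lies entirely inside $\mnd{g}{d}$. Once both are established, we obtain
\[
\dim\Bigl(\mnd{g}{d}\Bigr) \geq \dim\bigl(\mathbb{M}_\Delta\bigr) = \dim\bigl(\mathbb{M}_P\bigr) = \dmndexp{g}{d},
\]
which is the conclusion of the proposition.

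For (a), I would locate the extremal polygon by studying the formula $\dim(\mathbb{M}_P) = g(P) + r(P) - 3 - c(P)$ from Lemma~\ref{lma:column_vector_formula} subject to the constraint $\lw(P^{(1)}) = d - 2$ forced by $\egon(P) = d$. A natural family of candidates is trapezoidal polygons of horizontal lattice width $d$, of the form $\conv\{(0,0),(n,0),(n+a,d),(b,d)\}$, with the slope parameters $a,b$ and horizontal length $n$ calibrated so that $P$ is maximal, $c(P)$ is as small as possible (ideally zero for all but a handful of edges), and $g(P) = g$. A direct lattice-point count should show that this polygon's dimension matches the upper bound $g + 2g/(d-1) + 2d - 3$ proven in Theorem~\ref{lma:maxdim2_restate}, up to the required flooring. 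Lemma~\ref{lma:beehive} then produces a regular beehive triangulation $\Delta$ of $P$ realizing $\dim(\mathbb{M}_\Delta) = \dim(\mathbb{M}_P)$.

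The main obstacle is (b): showing $\gon(\Gamma) = d$ for every $\Gamma \in \mathbb{M}_\Delta$. The inequality $\gon(\Gamma) \leq d$ is immediate from Lemma~\ref{lma:gon_at_most_egon_graphs}. For the matching lower bound I would apply Proposition~\ref{prop:snlowerbound} and construct an explicit scramble $\mathcal{E}$ of order exactly $d$ on the canonical loopless model of $\Gamma$. The construction would leverage the layered structure of the beehive triangulation $\Delta$: after aligning $P$ so its lattice width is in the vertical direction, group the vertices of the dual graph into $d$ eggs, one per horizontal strip $\{y=i\text{--}i+1\} \cap P$ for $i=0,\dots,d-1$, taking each egg to be a connected subset of dual vertices coming from triangles inside that strip. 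Pairwise disjointness of the eggs gives $h(\mathcal{E}) \geq d$ immediately; the delicate step is $e(\mathcal{E}) \geq d$, which amounts to producing $d$ edge-disjoint paths in the dual graph between every pair of strips.

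This edge-disjoint-paths step is where the hypothesis $g \geq d^3$ enters. Each strip of $P$ should contain enough lattice points that its dual subgraph is wide enough to support $d$ disjoint crossings to each other strip; within a beehive triangulation this forces each of the $d$ strips to contribute roughly $\Omega(d^2)$ lattice points, and hence $g = \Omega(d^3)$. With this hypothesis in hand and the scramble eggs chosen accordingly, Menger-type counting of edge-disjoint paths through the grid-like beehive structure should yield $e(\mathcal{E}) \geq d$. We then obtain $\|\mathcal{E}\| = \min\{h(\mathcal{E}), e(\mathcal{E})\} = d$, whence $\gon(\Gamma) \geq \sn(G) \geq d$ by Proposition~\ref{prop:snlowerbound}, and property (b) is proved. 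I expect the bulk of the technical work to reside in this final combinatorial verification, while step (a) should reduce to routine lattice-point bookkeeping once the correct family of polygons is identified.
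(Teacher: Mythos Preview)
Your high-level plan matches the paper's, but step (a) contains a genuine gap that you have underestimated. You want to produce an explicit polygon $P$ with $\dim(\mathbb{M}_P) = \dmndexp{g}{d}$ and certify this by matching the upper bound $\lfloor U(g,d)\rfloor$ of Theorem~\ref{lma:maxdim2_restate}. But that bound is not tight in general: Theorem~\ref{thm: d=5} gives $\dmndexpc{g}{5} = \lfloor U(g,5)\rfloor - 1$ for all $g \geq 12$. So you cannot verify extremality of your candidate without an independent computation of $\dmndexp{g}{d}$, which is precisely the unknown quantity. Identifying the extremal polygon for each $(g,d)$ is not routine bookkeeping; it is a genuine classification problem that the paper never solves and never needs to.

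The paper sidesteps this entirely. It takes an \emph{arbitrary} maximal polygon $P$ with $\dim(\mathbb{M}_P) = \dmndexp{g}{d}$ (one exists since the union defining $\mndexp{g}{d}$ is finite) and proves that the hypothesis $g \geq d^3$ forces $P$ to contain a \emph{crystal}: a $(d+1)\times(d-1)$ rectangular block of interior lattice points (Lemma~\ref{lma:hascrystal}). This is where $g \geq d^3$ actually enters --- not, as you guess, through a width condition on horizontal strips. A beehive triangulation $\Delta$ is then engineered to refine a grid subdivision of $P^{(1)}$, so that inside the crystal every triangulation edge has vertical component $0$ or $1$ (Lemma~\ref{lma:existsbeehive}). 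The scramble is placed on $d$ \emph{vertical} strips within the crystal, and the egg-cut bound becomes a one-line observation: any cut separating two such eggs corresponds to a top-to-bottom lattice path in the triangulated crystal, hence uses at least $d$ edges (Lemma~\ref{lma:crystalsn}). Your horizontal-strip scramble across all of $P$ might be made to work for a hand-picked rectangular $P$, but it cannot be applied to an unknown extremal $P$ whose boundary shape you do not control; the crystal is exactly the device that furnishes a uniform local structure inside every such polygon.
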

    
    We proceed by showing that when the genus of a maximal non-hyperelliptic polygon $P$ of a fixed lattice width is sufficiently large, then $P$ admits a triangulation $\Delta$ such that the skeleton $\Gamma$ of any tropical curve  dual to $\Delta$ has gonality equal to the expected gonality of $P$. Furthermore, we prove that $\modspace{\Delta}$ is top-dimensional in $\mndexp{g}{d}$. Throughout, the central element of each argument is a geometric shape called a \emph{crystal}. 
    
    This section is split into three parts. In the first, for polygons $P$ containing a crystal, we construct a regular triangulation $\Delta$ such that $\modspace{\Delta}$ is top-dimensional in $\mndexp{g}{d}$ where $g=g(P)$ and $d=\egon(P)$\footnote{That is, $P \neq k\Sigma$ for some $k \in \Z$, which is not an issue because the family of triangles $k\Sigma$ does not have sufficiently high genus.}. In the second, we prove that any metric graph dual to $\Delta$ has gonality equal to the expected gonality of $P$. Finally, we prove that if $g\geq d^3$, then $P$ must contain a crystal.

    Throughout this section, unless otherwise noted, assume that any lattice polygon $P$ is maximal with $\lw(P) = \egon(P) = d$ and $d \geq 3$. Additionally, assume that $P$ lies in the horizontal strip $H_{0}^d = \R \times [0, d]$.

    \subsection{Construction of $\Delta$}\label{sec:delta}

    We say that a polygon $P$ \emph{contains a crystal of length $d$} if there exist $d+1$ consecutive integers $x_0,\dots,x_d$, such that for every $i \in \{0,\dots,d\}$ and every $j \in \{1,\dots,d-1\}$, the point $(x_i,j)$ is an interior point of $P$. In other words, $P$ contains a crystal of length $d$ if and only if it contains a rectangle of interior lattice points with width $d$ and height $d-2$. See Figure~\ref{figure:crystal} for an example of a crystal.

    \begin{figure}[hbt]
        \centering
        \includegraphics[width=0.8\linewidth]{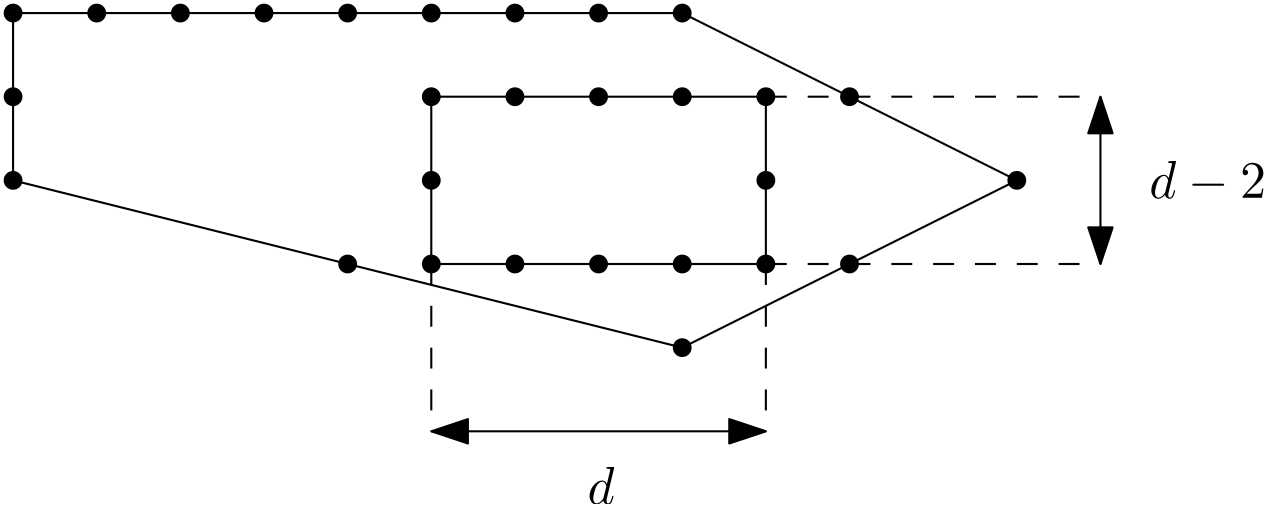}
        \caption{A polygon with a crystal of length $d = 4$.}
        \label{figure:crystal}
    \end{figure}

    Let $P$ be a lattice polygon containing a crystal of length $d$. We state two lemmas that are useful in constructing $\Delta$.

    \begin{lemma}[\cite{Triangulations}, Proposition 2.3.16]\label{lma:heightrefinement}
        Let $\cS$ be a regular subdivision of a polygon $P$ and let $h:P \cap \Z^2 \to \R$ be a height function on the lattice points of $P$. Then the following is a regular subdivision that refines $\cS$:
        \[\cS_h := \bigcup_{C \in \cS} \cR\left(P\vert_C, h\right).\]
        We say that $\cS_h$ is the subdivision $\cS$ \emph{refined by $h$}.
    \end{lemma}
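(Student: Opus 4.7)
The plan is to exhibit a height function whose lower convex hull projects exactly to $\cS_h$. Since $\cS$ is regular, fix a height function $\omega : P \cap \Z^2 \to \R$ inducing it, and consider the perturbed function $\omega_\epsilon := \omega + \epsilon h$ for a small positive parameter $\epsilon$. The idea is that when $\epsilon$ is small, the contribution of $\omega$ dominates at the ``coarse'' scale so that the cellular structure of $\cS$ is preserved, while within each individual cell of $\cS$ the contribution of $h$ takes over and produces the desired refinement.

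The key structural observation is that on each cell $C \in \cS$, the restriction of $\omega$ to the lattice points $p \in C \cap \Z^2$ used by $\cS$ agrees with an affine functional $a_C(p)$ coming from the supporting hyperplane of the lower face above $C$. Hence $\omega_\epsilon$ restricted to these points has the form $a_C(p) + \epsilon h(p)$, and since the regular subdivision of a point set is unchanged by adding an affine function or by rescaling by a positive constant, the subdivision of $C$ induced by $\omega_\epsilon|_C$ is precisely $\cR(P|_C, h)$. Assembling these local refinements over every $C \in \cS$ yields $\cS_h$, certifying that it is regular.

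The main technical step is choosing $\epsilon$ small enough that the coarse structure of $\cS$ is preserved: no edge of $\cS$ shifts, no lattice point strictly above the supporting hyperplane of its cell is pulled below it, and no cell of $\cS$ merges with a neighbor. This is handled by a standard finiteness argument: there are only finitely many combinatorial types of regular subdivisions of $P$ with lattice point vertices, and for each ``bad'' combinatorial transition the threshold value of $\epsilon$ is bounded below by some positive constant; taking the minimum across these finitely many thresholds gives a uniform bound. Once such an $\epsilon$ is fixed, the refinement property $\cS_h \preceq \cS$ follows immediately from the construction, since every cell of $\cS_h$ lies in exactly one cell of $\cS$.

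The main obstacle is setting up this uniform perturbation bound rigorously, and in particular handling any lattice points that lie in the relative interior of higher-dimensional faces of $\cS$ (so that the local refinement matches the intended $\cR(P|_C, h)$). This is precisely the content of \cite[Proposition 2.3.16]{Triangulations}, to which we defer the detailed verification; the perturbation strategy above is the standard approach.
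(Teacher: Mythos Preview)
The paper does not give its own proof of this lemma; it is stated as a citation to \cite[Proposition~2.3.16]{Triangulations} and used as a black box. Your sketch of the perturbation argument $\omega_\epsilon = \omega + \epsilon h$ is the standard one underlying that reference, so there is nothing to compare---your proposal and the paper both ultimately defer to the same source.
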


    \begin{lemma}[\cite{counting_lattice_triangulations}, Lemma 3.3]\label{lma:patching}
    Let $\Delta$ be a unimodular triangulation of a lattice trapezoid with two parallel sides $\tau$ and $\tau'$ of distance one. Then, every piecewise linear function $h_0 : \tau \to \R$ that is strictly convex on $\tau \cap \Z^2$ can
    be extended to a height function for $\Delta$.
    \end{lemma}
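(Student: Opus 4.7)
The plan is to apply a $\Z$-affine transformation so that $\tau$ lies on $\{y = 0\}$ and $\tau'$ lies on $\{y = 1\}$, and label the lattice points as $v_i = (i,0)$ for $i = 0,\dots,n$ on $\tau$ (with prescribed heights $\alpha_i := h_0(v_i)$) and $w_j = (b_j,1)$ for $j=0,\dots,m$ on $\tau'$ (heights $\beta_j$ to be determined, with $b_{j+1} = b_j + 1$). The distance-one hypothesis then forces every triangle of $\Delta$ to be either an ``up'' triangle $\{v_i, v_{i+1}, w_j\}$ or a ``down'' triangle $\{v_i, w_j, w_{j+1}\}$. For each $w_j$, the up triangles containing $w_j$ form a consecutive fan $\{v_i, v_{i+1}, w_j\}$ for $i \in [i_j^{\min}, i_j^{\max}]$, and the unique down triangle between $w_j$ and $w_{j+1}$ forces $i_j^{\max} + 1 = i_{j+1}^{\min}$.

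The next step is to expand the regularity condition ``$h(p) > L_T(p)$ for every $T \in \Delta$ and every lattice point $p \notin T$'' into inequalities on the heights. The conditions with $p \in \tau$ are implied directly by strict convexity of $h_0$. The remaining conditions, with $p \in \tau'$, reduce via a short calculation using the vertical distance $1$ to a pair of near-neighbor bounds on each consecutive slope $s_j := \beta_{j+1} - \beta_j$, namely the sandwich
\begin{equation*}
\alpha_{i_j^{\max}+1} - \alpha_{i_j^{\max}} \;<\; s_j \;<\; \alpha_{i_j^{\max}+2} - \alpha_{i_j^{\max}+1},
\end{equation*}
while all far-range inequalities follow from these using the identity $(\beta_j - \beta_{j'})/(j - j') = \tfrac{1}{j-j'}\sum_{k=j'}^{j-1} s_k$ together with strict monotonicity of $(s_j)$.

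To conclude, strict convexity of $h_0$ makes the $\alpha$-slopes $\alpha_{i+1} - \alpha_i$ strictly increasing in $i$, so every sandwich interval is nonempty. Picking any $s_j$ in the $j$-th interval (say the midpoint) and setting $\beta_j := \beta_0 + \sum_{k<j} s_k$ for an arbitrary $\beta_0$ yields an extension of $h_0$ to a height function whose induced regular subdivision is exactly $\Delta$. The main obstacle is the bookkeeping in the middle step: verifying that the $O(nm)$ constraints arising from arbitrary pairs $(T,p)$ collapse to this one-per-slope sandwich condition. Both the structural identity $i_j^{\max} + 1 = i_{j+1}^{\min}$ and the averaging identity rely crucially on the distance-one assumption, so the collapse is where the hypothesis of the lemma does the real work.
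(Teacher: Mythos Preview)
The paper does not supply its own proof of this lemma; it is quoted verbatim from Kaibel--Ziegler \cite{counting_lattice_triangulations} and used as a black box.  So there is no ``paper's proof'' to compare against, and your write-up would in fact fill a gap in exposition.

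Your approach is the natural one and is essentially correct.  Two small points are worth tightening.  First, the structural claim $i_j^{\max}+1=i_{j+1}^{\min}$ presupposes that every $w_j$ is the apex of at least one up-triangle.  This can fail: if $w_j$ is joined to a single $v_c$, the down-triangles $\{v_c,w_{j-1},w_j\}$ and $\{v_c,w_j,w_{j+1}\}$ are adjacent and the local convexity condition across $\{v_c,w_j\}$ is $s_{j-1}<s_j$ rather than a sandwich.  The fix is easy: let $c_j$ denote the apex of the unique down-triangle on $\{w_j,w_{j+1}\}$, so $c_0\le c_1\le\cdots\le c_{m-1}$; the local conditions become $\alpha_{c_j}-\alpha_{c_j-1}<s_j<\alpha_{c_j+1}-\alpha_{c_j}$ (dropping a bound at the ends), together with $s_{j-1}<s_j$ whenever $c_{j-1}=c_j$.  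Strict convexity of $h_0$ makes each interval nonempty, and within a block of equal $c_j$'s one simply picks an increasing sequence inside that interval.  Second, your reduction of the far-range constraints to the near-neighbor ones is correct, but note that for down-triangles $T=\{v_c,w_j,w_{j+1}\}$ and test points $p=v_k$, the verification uses the upper sandwich bound $s_j<\alpha_{c+1}-\alpha_c$ together with convexity of the $\alpha$'s; it is worth making this symmetric case explicit alongside the up-triangle case you wrote out.
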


    An immediate consequence of Lemma \ref{lma:patching} is that any subdivision of a polygon with lattice width $1$ is regular.

    Label the vertices of $P^{(1)}$ by $v_1, \dots, v_n$ counterclockwise. Define $\tau_i$ to be the face $\conv\{v_{i-1},v_i\}$, with indices taken modulo $n$.
    
    Consider the initial subdivision $\cS$ of $P$, induced by the height function
    \[h_0\left(v\right) = \begin{cases}
        1 & v \in \partial P\cap \Z^2 \\
        0 & \text{elsewhere on $P \cap \Z^2$.}
    \end{cases}\]
    An example of this subdivision is illustrated in Figure \ref{figure: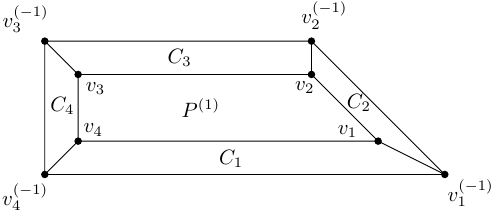}.

        \begin{figure}[hbt]
        \centering
        \includegraphics[width=0.8\linewidth]{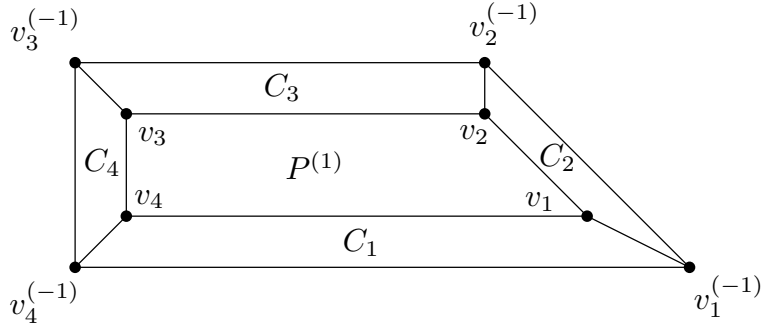}
        \caption{Initial subdivision $\cS$, with vertices labeled.}
        \label{figure:initial_subdivision.pdf}
        \end{figure}
    Define cells $C_i = \conv\{\tau_i, \tau_i^{(-1)}\}$. Observe that the cells of $\cS_0$ are $C_1, \dots, C_n$, as well as $P^{(1)}$.  
    
    Let
    \[h_0'(v) = \begin{cases}
        1 & v \in \{v_{1}^{(-1)}, \dots, v_n^{(-1)}\} \\
        0 & \text{elsewhere on $P \cap \Z^2$},
    \end{cases}\]
    and let $\cS'$ denote the refinement of $\omega'$ on $\cS$ (as defined in Lemma~\ref{lma:heightrefinement}). See Figure \ref{figure: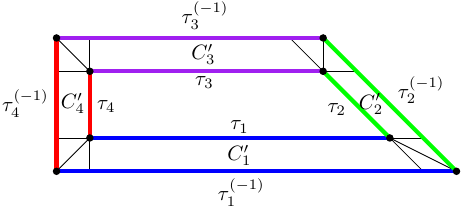}.  We describe this refinement below.
        \begin{figure}[hbt]
        \centering
        \includegraphics[width=0.8\linewidth]{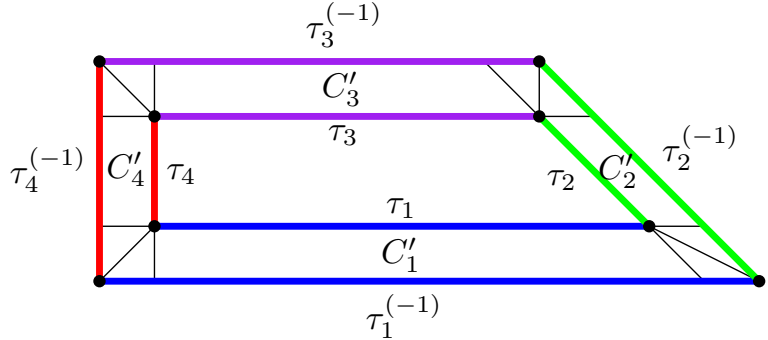}
        \caption{The subdivision $\cS'$ obtained by refining $\cS$, with edges labeled.}
        \label{figure:initial_subdivision2.pdf}
        \end{figure}

    If $\tau_i^{(-1)}$ has more than two distinct lattice points, then on each $C_i$, refining by $h_0'$ will connect $v_i$ to the lattice point on $\tau_i^{(-1)}$ that is closest to and distinct from $v_i^{(-1)}$. Similarly, $v_{i-1}^{(-1)}$ is connected to the lattice point on $\tau_i^{(-1)}$ that is closest to and distinct from $v_{i-1}^{(-1)}$. This creates two triangles of area $1/2$. Take $C_i'$ to be the cell between the two triangles of area $1/2$. If $\tau_i^{(-1)}$ has two or fewer lattice points, then refining by $h_0'$ will not create any additional edges on $C_i$. In this case, take $C_i' = C_i$.

    We will now construct a height function on $P^{(1)}$. Let $x_L$ and $x_R$ be the minimum and maximum $x$-coordinates in $P^{(1)}$ respectively. Let $\omega: P \cap \Z^2 \to \R$ by
    \[\omega(x,y) = (x-x_L)(x-x_R) + (y-1)(y-(d-1)).\]

    Since $\omega$ is a sum of two separable quadratic functions with positive leading coefficients, $\omega$ is strictly convex. We focus on the lower convex hull of the graph
    \[\Lambda = \{(x,y,\omega(x,y)): (x,y) \in P \cap \Z^2\},\]
    as well as its projection onto $P^{(1)}$, denoted $\cS_\omega$.
    
    A key feature of $\omega$ is its behavior on the grid. Consider any unit square whose four lattice-point corners $(x,y), (x+1, y), (x, y+1), (x+1, y+1)$ are all in $P^{(1)}$. 
    Then, the four corresponding points in $\Lambda$ are coplanar (as a consequence of the separability of $\omega$), and the image of any lattice point outside of the unit square will lie strictly above the plane. Therefore, the four points form a face on the lower convex hull of $\Lambda$. Projecting these faces back onto $P^{(1)}$ reveals a grid-aligned subdivision of $P^{(1)}$, whose edges connect all points along the same $x$-coordinates, and similar for the $y$-coordinates.
    

    The main challenge from this point is showing that there exists a \emph{beehive} triangulation $\Delta$ of $P$ that refines the ``grid-like'' subdivision that $\omega$ induces on $P^{(1)}$. We prove two technical lemmas before showing that such a triangulation $\Delta$ does exist.

    \begin{proposition}\label{prop:threepointbeehive}
        Let $C$ be a lattice polygon such that $C = \conv\{\tau, \tau'\}$, where $\tau, \tau'$ are parallel edges separated by distance 1. Suppose $\tau'$ has three or more lattice points. Label the lattice points of $\tau$ as $\nu_1, \dots, \nu_n$ and the lattice points of $\tau'$ as $\mu_1, \dots, \mu_m$. Then, there exists a regular unimodular triangulation $\Delta$ of $C$ such that:
        \begin{itemize}
            \item $\nu_1$ is connected to $\mu_2$;
            \item $\nu_n$ is connected to $\mu_{m-1}$;
            \item the number of points on $\tau$ connected to at least two points on $\tau'$ is maximized.
        \end{itemize}
    \end{proposition}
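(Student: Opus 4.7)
The plan is to encode unimodular triangulations of $C$ as monotone lattice paths and then exhibit one with the desired properties, deferring regularity to Lemma~\ref{lma:patching}. Any unimodular triangulation of the trapezoid $C$ consists of $n + m - 2$ unit triangles, each either a \emph{down-triangle} $\nu_i \nu_{i+1} \mu_j$ (two consecutive lattice points on $\tau$ and one on $\tau'$) or an \emph{up-triangle} $\nu_i \mu_j \mu_{j+1}$. Traversing the triangulation from left to right gives a bijection between unimodular triangulations of $C$ and monotone lattice paths from $(1,1)$ to $(n,m)$, in which down-triangles correspond to right-steps and up-triangles to up-steps. Under this bijection, the edge $\nu_1 \mu_2$ appears in the triangulation if and only if the first step of the path is an up-step, the edge $\nu_n \mu_{m-1}$ appears if and only if the last step is an up-step, and a vertex $\nu_i$ is adjacent to at least two points of $\tau'$ if and only if some up-step of the path occurs at $x$-coordinate $i$.

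The problem thus reduces to constructing a monotone lattice path from $(1,1)$ to $(n,m)$ whose first and last steps are up-steps and which maximizes the number of distinct $x$-coordinates bearing an up-step. Since the total number of up-steps is $m - 1$, this count is bounded above by $\min(n, m - 1)$. To attain the bound we split into two cases. If $m - 1 \leq n$, take the path that alternates an up-step and a right-step at $x = 1, 2, \ldots, m - 2$, followed by $n - m + 1$ consecutive right-steps, and ending with a single up-step at $x = n$; this yields $m - 1$ distinct $x$-coordinates bearing up-steps, and the first and last steps are both up. If $m - 1 > n$, place one up-step followed by one right-step at each of $x = 1, \ldots, n - 1$, and place the remaining $m - n \geq 2$ up-steps at $x = n$; now every $x$-coordinate bears an up-step, giving $n$ in total, with the first and last steps both up.

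Let $\Delta$ be the unimodular triangulation of $C$ corresponding to the constructed path. Regularity is immediate from Lemma~\ref{lma:patching}: any strictly convex piecewise linear function on $\tau \cap \Z^2$ extends to a height function realizing $\Delta$, so $\Delta$ is regular. The main obstacle in the argument is verifying that the prescribed first-and-last-step constraints do not cost anything in the maximization, but this is automatic once $m \geq 3$: we then have at least two up-steps available to spend on the extreme columns $x = 1$ and $x = n$, leaving the remaining $m - 3$ up-steps free to be distributed optimally, so the upper bound $\min(n, m - 1)$ is still achieved.
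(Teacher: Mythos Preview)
Your proof is correct and follows essentially the same strategy as the paper's: construct an explicit triangulation achieving $\min(n,m-1)$ interior-doubly-connected points while respecting the two corner edges, and then invoke Lemma~\ref{lma:patching} for regularity. The main difference is in packaging. The paper first cites \cite[Proposition~4.6]{fixed_newton_polyon} for the value $\min(n,m-1)$ of the maximum, then builds the triangulation by pinning the two corner triangles $\nu_1\mu_2\nu_2$ and $\nu_n\mu_{m-1}\nu_{n-1}$ and filling the remaining trapezoid $C'$ via that same cited result. You instead use the standard bijection between unimodular triangulations of a width-$1$ trapezoid and monotone lattice paths from $(1,1)$ to $(n,m)$, which makes both the upper bound $\min(n,m-1)$ and the explicit construction self-contained, with no need for the external reference. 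Your encoding also makes transparent why the end constraints cost nothing once $m\geq 3$. Both approaches are short; yours is slightly more elementary.
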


    \begin{proof}
        In \cite[Proposition 4.6]{fixed_newton_polyon}, the authors show that the maximum number of points on $\tau$ that can be connected to at least two points on $\tau'$ is $\min\{n, m-1\}$. This can be achieved by a ``zig-zag'' unimodular triangulation that connects, in sequence, $\nu_1, \mu_1, \nu_2, \mu_2, \dots$ and terminates when there are no points left on either $\tau$ or $\tau'$.

        First, suppose $\tau$ contains only two lattice points. Then, it can be seen that connecting $\nu_1, \mu_2$ and $\nu_n, \mu_{m-1}$ will connect all points on $\tau$ to at least two points on $\tau'$. Now, suppose that $\tau$ contains at least three lattice points. Connect $\nu_1, \mu_2, \nu_2$ and $\nu_n, \mu_{m-1}, \nu_{n-1}$, and call $C'$ the remaining space to triangulate. By \cite[Proposition 4.6]{fixed_newton_polyon}, $C'$ can be triangulated such that $\min\{n-2, m-2\}$ points on $\tau$ are connected to at least two points on $\tau'$. Use this triangulation to finish the unimodular triangulation $\Delta$ on $C$, which will have
        \[2 + \min\{n-2, m-3\} = \min\{n, m-1\}\]
        points on $\tau$ connected to at least two points on $\tau'$. Lastly, $\Delta$ is regular by Lemma \ref{lma:patching} since $C$ is lattice width 1.
    \end{proof}

    \begin{proposition}[\cite{fixed_newton_polyon}, Proposition 4.5]\label{prop:twopointbeehive}
        Let $C$ be a lattice polygon such that $C = \conv(\tau, \tau')$, where $\tau, \tau'$ are parallel edges separated by distance 1. Suppose $\tau'$ has two or fewer lattice points. Then, any unimodular triangulation $\Delta$ of $C$ will maximize the number of points on $\tau$ connected to at least two points on $\tau'$.
    \end{proposition}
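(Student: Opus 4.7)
The plan is to split into cases based on $|\tau'\cap\Z^2|$, which by hypothesis is at most two. If $|\tau'\cap\Z^2|\le 1$, then no lattice point of $\tau$ can be joined to two distinct points of $\tau'$, so the maximum in question is zero and is vacuously achieved by every triangulation. Thus the substantive case is $|\tau'\cap\Z^2|=2$, where I write $\tau'=\mu_1\mu_2$ for its two endpoints.

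In that case the aim is to show that \emph{exactly one} lattice point of $\tau$ is joined by $\Delta$-edges to both $\mu_1$ and $\mu_2$, regardless of the choice of unimodular triangulation $\Delta$. For existence, the key observation is that $\mu_1\mu_2=\tau'$ is a boundary edge of $C$, so it bounds a unique triangle $T\in\Delta$. Since $\tau$ and $\tau'$ are parallel at lattice distance one, no lattice point of $C$ lies strictly between them, so the third vertex of $T$ must lie on $\tau$; call it $\nu$. The edges of $T$ then exhibit the desired adjacencies $\nu\mu_1,\nu\mu_2\in\Delta$.

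For uniqueness, I would suppose $\nu'\in\tau\cap\Z^2$ is also joined to both $\mu_1$ and $\mu_2$ by $\Delta$-edges. Then all three sides of the triangle $\nu'\mu_1\mu_2$ lie in $\Delta$, the triangle contains no lattice points in its interior (again by the width-one hypothesis), and no other vertex of $\Delta$ lies on its boundary (since in a unimodular triangulation every lattice point of $C$ is a vertex, and triangulation edges cannot pass through interior vertices). Hence $\nu'\mu_1\mu_2$ must itself be a face of $\Delta$ having $\mu_1\mu_2$ as an edge, and uniqueness of $T$ forces $\nu'=\nu$. Combining existence and uniqueness, every unimodular triangulation realises the value one, which is therefore the maximum. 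I do not foresee any serious obstacle; the content is essentially the ``boundary edges bound a unique triangle'' observation, specialised to a width-one strip.
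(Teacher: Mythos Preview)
Your proposal is correct and follows essentially the same approach as the paper: in the two-point case both arguments rest on the observation that the boundary edge $\tau'=\mu_1\mu_2$ bounds a unique triangle of $\Delta$, whose apex must lie on $\tau$. The paper's proof is terser---it handles the one-point case by noting the triangulation is unique, and in the two-point case asserts ``which is maximal'' by implicit appeal to the bound $\min\{n,m-1\}=1$ from \cite[Proposition~4.6]{fixed_newton_polyon}---whereas you make the argument self-contained by proving uniqueness directly (any second apex $\nu'$ would span a unimodular triangle on $\mu_1\mu_2$, forcing $\nu'=\nu$), and handle the one-point case more simply by observing the maximum is vacuously zero.
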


    \begin{proof}      
        Suppose $\tau'$ has only one lattice point. Then, there is only one possible unimodular triangulation of $C$, namely the one that connects $\tau'$ to every point on $\tau$. Suppose $\tau'$ has two lattice points. Then, $\Delta$ must connect two points on $\tau'$ to the same point on $\tau$, which is maximal. 
    \end{proof}

    \begin{lemma}\label{lma:existsbeehive}
        There exists a regular beehive triangulation $\Delta$ of $P$ such that $\Delta$ refines $\cS_{\omega}$ on $P^{(1)}$.
    \end{lemma}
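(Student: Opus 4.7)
The plan is to build $\Delta$ as an iterated refinement of $\cS'$, using Lemma \ref{lma:heightrefinement} to inherit regularity and Lemma \ref{lma:patching} to flexibly realize unimodular triangulations on each boundary trapezoid. I begin by noting that $\cS'$ is already regular, being the refinement of $\cS$ by $h_0'$.

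The key observation is that $\omega$ is strictly convex and its restriction to any lattice line has leading quadratic coefficient $a^2 + b^2 > 0$, where $(a,b)$ is a direction vector of that line. So $\omega$ is strictly convex on the lattice points of each edge $\tau_i$ of $\partial P^{(1)}$, and thus $\cS_\omega$ contains every unit edge of $\partial P^{(1)}$, which gives the first beehive condition. On $P^{(1)}$, I would refine $\cS_\omega$ into a unimodular triangulation $\Delta^{(1)}$ by splitting each unit-square cell (which, in the crystal region, form a genuine grid) by a chosen diagonal, and triangulating any non-square boundary cells arbitrarily. A sufficiently small generic perturbation $\delta_1$ of $\omega$ on the interior lattice points of $P^{(1)}$ then realizes $\Delta^{(1)}$ via the lower-hull construction.

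For each boundary cell $C_i'$, I would next apply Proposition \ref{prop:threepointbeehive} or Proposition \ref{prop:twopointbeehive} (depending on whether $\tau_i^{(-1)}$ has at least three or at most two lattice points) to obtain a unimodular triangulation $\Delta_i$ of $C_i'$ maximizing the number of lattice points on $\tau_i$ connected to at least two lattice points on $\tau_i^{(-1)}$. Since $\omega + \delta_1$ is strictly convex on $\tau_i \cap \Z^2$, Lemma \ref{lma:patching} produces heights $\delta_2$ on the lattice points of $\tau_i^{(-1)}$ so that $\omega + \delta_1 + \delta_2$ induces $\Delta_i$ on $C_i'$. Setting $h = \omega + \delta_1 + \delta_2$ and $\Delta = \cS'_{h}$, Lemma \ref{lma:heightrefinement} guarantees regularity of $\Delta$; by construction $\Delta$ restricts to $\Delta^{(1)}$ on $P^{(1)}$ and to $\Delta_i$ on each $C_i'$. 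Hence $\Delta$ refines $\cS_\omega$ on $P^{(1)}$, contains every edge of $\partial P^{(1)}$, connects each $v_i$ to $v_i^{(-1)}$ (inherited from $\cS'$), and maximizes the beehive adjacency on each $\tau_i$.

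The main obstacle is coordinating the local patching extensions at the corners $v_i^{(-1)}$, which are shared between $C_i'$ and $C_{i+1}'$. This is resolved by the flexibility in Lemma \ref{lma:patching}: the set of valid extensions realizing each $\Delta_i$ is open in the space of heights on $\tau_i^{(-1)} \cap \Z^2$, so we may first fix consistent values at the corner vertices of $P$ (compatible with the edges $v_i v_i^{(-1)}$ already present in $\cS'$) and then fill in the interior lattice points of each $\tau_i^{(-1)}$ by the patching lemma one cell at a time.
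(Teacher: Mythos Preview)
Your proof is correct and follows the same strategy as the paper: refine $\cS'$ by a single height function equal to $\omega$ on $P^{(1)}$ and determined on each boundary trapezoid $C_i$ via Lemma~\ref{lma:patching} together with Propositions~\ref{prop:threepointbeehive}--\ref{prop:twopointbeehive}, invoking Lemma~\ref{lma:heightrefinement} throughout for regularity. The only differences are cosmetic---you perturb $\omega$ by $\delta_1$ to achieve unimodularity on $P^{(1)}$ upfront, whereas the paper defers the unimodular refinement to a final step after building a possibly non-unimodular $\tilde{\Delta}$; and you make the corner-consistency issue at the shared vertices $v_i^{(-1)}$ explicit, where the paper simply assigns arbitrary heights to the remaining vertices of $P$.
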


    \begin{proof}    
        Notice that $\cS'$ already contains all the boundary edges of $P^{(1)}$, as well as connects each $v_i$ to $v_i^{(-1)}$. Therefore, any refinement of $\cS'$ must, as well. In addition to the two criteria above, a beehive triangulation must maximize, for each cell $C_i$, the number on $\tau_{i}$ connected to at least two points on $\tau_i^{(-1)}$. Propositions \ref{prop:threepointbeehive} and \ref{prop:twopointbeehive} guarantee such refinements on $C_i$ that are ``compatible'' with $\cS_\omega$, and it remains to ``patch'' these refinements onto $\cS_\omega$.
    
        We proceed by constructing a height function $h: P \cap \Z^2 \to \R$ that refines $\cS'$.

        \begin{enumerate}
            \item \textbf{The interior $P^{(1)}$:} For lattice points $(x,y) \in P^{(1)}$, assign them the height $\omega(x,y)$. This choice ensures that the subdivision induced by $h$ on $P^{(1)}$ is $\cS_\omega$ as described earlier. Note that a unimodular triangulation on $P^{(1)}$ does not affect the ``beehive-ness'' of $\Delta$.
        
            \item \textbf{Boundary regions $C_i$:} Each cell $C_i = \operatorname{conv}\{\tau_i, \tau_i^{(-1)}\}$ is a trapezoid of lattice width 1.  The heights of vertices on $\tau_i$ are already fixed by step 1.
            
            Let $\tau^{(-1)}_i$ have three or more lattice points on it. Notice that because the restriction of a strictly convex function to any line is also strictly convex \cite[\textsection 3.1.1]{BV04}, $\omega$ is strictly convex on $\tau_i$. Let $\Delta_i$ be the triangulation on $C'$ given by Proposition \ref{prop:threepointbeehive}, which suffices for $h$ to be beehive. We may use Lemma \ref{lma:patching} to extend the height function by picking heights on $C'$ that induce $\Delta_i$ on $C_i$.
        
            Now suppose $\tau_i^{(-1)}$ has two or fewer points. Then, assign the points in $C_i$ any arbitrary height. By Proposition \ref{prop:twopointbeehive}, any unimodular triangulation suffices for $h$ to be beehive. 
               
            \item \textbf{Remaining vertices:} On the vertices of $P$ that are not already assigned some height, we also pick any arbitrary height. 
        \end{enumerate}
        
        Let $\tilde{\Delta}$ be the subdivision obtained by refining $\cS$ with $h$. Notice that $\tilde{\Delta}$ will not be a unimodular triangulation. However, we may use Lemma \ref{lma:heightrefinement} to further refine $\tilde{\Delta}$ in any way until we reach a unimodular triangulation. Note that this process also preserves regularity. Thus, we let $\Delta$ be any unimodular triangulation that refines $\tilde{\Delta}$. 
    \end{proof}

\subsection{Curves of gonality $d$}

    We now show that any curve that arises from $\Delta$ has gonality equal to $\lw(P) = d$. 

    \begin{lemma}\label{lma:crystalsn}
        Suppose $P \not\cong d\Sigma$  contains a crystal of length $d$, and let $\Delta$ be a unimodular triangulation that refines $\tilde{\Delta}$, as described above in Section~\ref{sec:delta}. Let $\Gamma$ be any tropical curve arising from $\Delta$. Then, $\gon(\Gamma) = \lw(P) = d$.
    \end{lemma}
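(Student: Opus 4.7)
The plan is to prove $\gon(\Gamma)=d$ by establishing matching upper and lower bounds. The upper bound $\gon(\Gamma)\le d$ is immediate from Lemma~\ref{lma:gon_at_most_egon_graphs}: the standing assumption of Section~\ref{sec:lower} together with $P\not\cong d\Sigma$ and Proposition~\ref{lwisegon} guarantees $\egon(P)=\lw(P)=d$, so $\gon(\Gamma)\le\egon(P)=d$.

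For the lower bound, I would invoke Proposition~\ref{prop:snlowerbound} and reduce to exhibiting a scramble $\mathcal{E}$ of order at least $d$ on the canonical loopless model $G$ of $\Gamma$. The plan is to exploit the horizontal slab structure of $\Delta$ forced by the crystal: for each $j\in\{0,1,\ldots,d-1\}$, let $E_j$ be the set of skeleton vertices dual to triangles of $\Delta$ whose lattice-point vertices lie at heights $\{j,j+1\}$. Inside the crystal every triangle has $y$-extent exactly one, so the $d-2$ interior strips each consist of a connected horizontal slab of $2d$ crystal triangles linked by in-square diagonals and horizontal triangulation edges; the boundary strips $E_0$ and $E_{d-1}$ pick up additional triangles from the cells $C_i$. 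Since the $E_j$ are pairwise disjoint, $h(\mathcal{E})\ge d$. For the egg-cut I would apply Menger's theorem: the $d$ unit-square columns $[x_i,x_{i+1}]$ of the crystal furnish $d$ edge-disjoint dual paths between any two eggs, where each column contributes a path that ascends level by level using the horizontal edge from $(x_i,j)$ to $(x_{i+1},j)$ at every $y=j$ together with the in-square diagonal. Different columns use disjoint edges at every level, giving $e(\mathcal{E})\ge d$ and hence $\sn(G)\ge d$.

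The main obstacle will be verifying connectivity of the two boundary eggs $E_0$ and $E_{d-1}$ and completing the $d$ edge-disjoint paths all the way from strip $[0,1]$ to strip $[d-1,d]$ when the triangulation outside the crystal is not horizontal. Specifically, if an edge $\tau_i$ of $P^{(1)}$ has primitive direction $(q,p)$ with $|p|\ge 2$, the beehive triangulation of the adjacent cell $C_i$ contains unimodular triangles of $y$-extent at least two; such triangles lie in no single strip $E_j$ and can obstruct the naive bottom-to-top routing. I expect this will be resolved by restricting $E_0$ and $E_{d-1}$ to the ``clean'' $y$-extent-one triangles of each $C_i$ and then leveraging the beehive structure of $\Delta$ established in Lemma~\ref{lma:existsbeehive} to show that these clean triangles still form a connected subgraph of $\Gamma$ bordering the crystal on each side, so that the full $d$ edge-disjoint paths from $E_0$ to $E_{d-1}$ persist.
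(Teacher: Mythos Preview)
Your upper bound via Lemma~\ref{lma:gon_at_most_egon_graphs} matches the paper exactly. For the lower bound both you and the paper build a scramble of order $d$, but the orientations are transposed: you take $d$ \emph{horizontal} eggs $E_0,\ldots,E_{d-1}$ (one per $y$-strip) and bound the egg-cut via $d$ edge-disjoint \emph{vertical} paths through the crystal columns; the paper instead takes $d$ \emph{vertical} eggs (one per crystal column $[x_{i-1},x_i]$) and bounds the egg-cut by observing that any separating lattice path in the triangulation from $y=0$ to $y=d$ must have length at least $d$, since every edge has $y$-component $0$ or $1$. No Menger-type argument is needed.

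The obstacle you flag is genuine, and your proposed fix is not a proof: you give no argument that the ``clean'' $y$-extent-one triangles in a side cell $C_i$ connect to the main slab, nor that your eggs $E_j$ remain connected once such triangles are included (or excluded). The key idea you are missing, which dissolves the obstacle entirely, is the paper's restriction to the subgraph $G'$ dual to $\Delta|_{[x_0,x_d]\times\R}$, followed by subgraph monotonicity of the scramble number \cite[Proposition~4.4]{new_lower_bound}. Because the crystal spans the full height of $P^{(1)}$, the vertical strip $[x_0,x_d]\times\R$ meets $P$ only in the crystal rectangle together with portions of the \emph{horizontal} cells $C_1$ and $C_k$; the steep side cells you worry about never enter the picture, and every triangle in $\Delta|_{[x_0,x_d]\times\R}$ has $y$-extent exactly one. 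On this restricted subgraph your horizontal-egg scramble would work without further argument (it is then essentially the planar dual of the paper's scramble), but on the full graph $G$ the connectivity of your eggs is not established.
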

    
    \begin{proof}
        Let $G$ be the dual graph to $\Delta$. Equivalently, $G$ is the underlying combinatorical graph of the skeleton $\Gamma$ \emph{before} smoothing over $2$-valent vertices. Label the integer $x$-coordinates of the crystal $x_0, \dots, x_d$ in increasing order. Consider $\Delta$ restricted to the vertical strip $V_{0}^m = [x_0, x_m] \times \R$. Denote this subdivision $\Delta\big\vert_{V_{0}^m}$. Let $G'$ be the dual graph to $\Delta\big\vert_{V_{0}^m}$.

        Each triangle in $\Delta\big\vert_{V_{0}^m}$ corresponds to a vertex in $G'$. Let the egg $\cE_i$ be the set of vertices corresponding to triangles strictly lying in the rectangular strip $[x_{i-1}, x_i] \times [1, d-1]$, as well as the triangles immediately above and below the strip. See Figure~\ref{figure:crystal_scramble} for an example.
        
        \begin{figure}[hbt]
            \centering
            \includegraphics[width=0.9\linewidth]{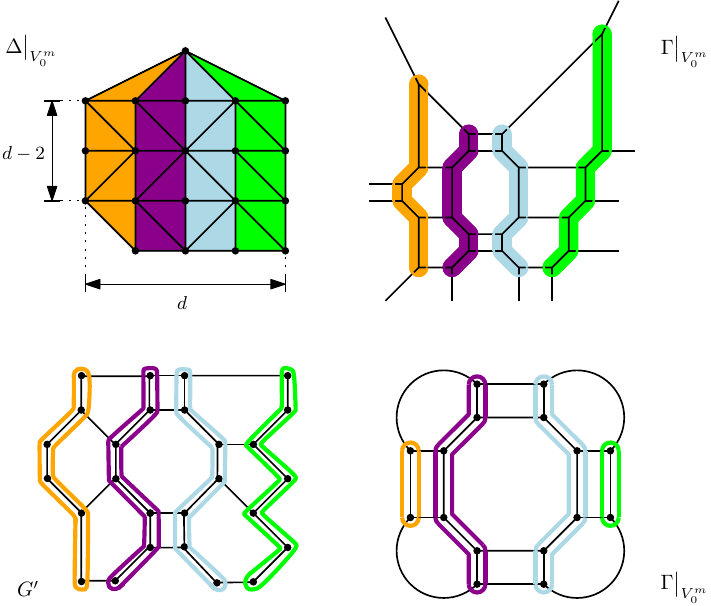}
            \caption{On the left, the subdivision $\Delta\big\vert_{V_{0}^m}$ and the combinatorial graph $G'$. On the right is the substructure of $\Gamma$ corresponding to $\Delta\big\vert_{V_{0}^m}$ as a tropical curve and as a metric graph. Each has features corresponding to eggs $\cE_1, \dots, \cE_4$ colored.}
            \label{figure:crystal_scramble}
        \end{figure}

        We exhibit $\{\cE_1, \dots, \cE_d\}$ as a scramble of order $d$. It has hitting number $d$ because there are $d$ disjoint eggs. We claim it also has egg-cut number $d$. Because edges of $G'$ correspond to edges shared between two faces in $\Delta\big\vert_{V_{0}^m}$, paths that separate $\Delta\big\vert_{V_{0}^m}$ into two connected nonempty regions correspond to edge-cuts of $G'$.

        Recall that an egg-cut is an edge-cut that contains two complete eggs. If a path on $\Delta\big\vert_{V_{0}^m}$ corresponds to an egg-cut of $G'$, then the path must have endpoints on the top and bottom sides of $\Delta\big\vert_{V_{0}^m}$. Furthermore, all edges of $\Delta\big\vert_{V_{0}^m}$, when decomposed as a vector, have $y$-components that are $0$ or $1$. Thus, all edge paths from the top to the bottom of $\Delta\big\vert_{V_{0}^m}$ must be of length $d$, so an egg-cut must have at least $d$ edges. Therefore, $\sn(G') \geq d$.
        
        Since scramble number is subgraph monotone~\cite[Proposition 4.4]{new_lower_bound}, $\sn(G) \geq  \sn(G') \geq d$. Furthermore, scramble number is subdivision monotone~\cite[Proposition 4.5]{new_lower_bound}, so we can consider $G$ and the combinatorial graph underlying the skeleton of $\Gamma$ to have the same scramble number. Finally, we also have, by Proposition~\ref{prop:snlowerbound}, that $\gon(\Gamma) \geq \sn(G) \geq d$. Finally, by Lemma~\ref{lma:gon_at_most_egon_graphs}, $\gon(\Gamma) \leq \egon(P) = \lw(P) = d$.
    \end{proof}

    \subsection{On crystals}
        
    We proceed with results about the formation of a crystal.
    \begin{proposition}\label{prop:crystalcondition}
        Let $P$ be a polygon of lattice width $d$ with at least $2d - 2$ points on $\tau_1$ and $2d - 2$ points on $\tau_k$. Then, $P$ is equivalent under a $\Z$-affine transformation to a polygon $P'$ that contains a crystal of length $d$.
    \end{proposition}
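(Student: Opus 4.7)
The plan is to apply a horizontal shear so that the two long edges $\tau_1$ and $\tau_k$ of $P^{(1)}$ are aligned as well as possible, and then to locate a $(d+1) \times (d-1)$ block of interior lattice points using convexity of $P^{(1)}$. I would start by observing that any edge of $P^{(1)}$ with at least $2d - 2$ lattice points is forced to be horizontal: a primitive lattice direction $(p, q)$ with $|q| \geq 1$ supports an edge with at most $\lfloor (d - 2)/|q| \rfloor + 1 \leq d - 1$ lattice points inside the horizontal strip $[1, d-1]$ containing $P^{(1)}$, and $d - 1 < 2d - 2$ for $d \geq 3$. Since a convex polygon has at most one edge perpendicular to any given direction, $\tau_1$ and $\tau_k$ must be the bottom and top horizontal edges of $P^{(1)}$; after a vertical reflection if necessary, write $\tau_1 = \conv\{(a, 1), (b, 1)\}$ and $\tau_k = \conv\{(a', d-1), (b', d-1)\}$ with $b - a,\, b' - a' \geq 2d - 3$, and swap the roles of $\tau_1$ and $\tau_k$ if needed so that $b - a \leq b' - a'$.

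Next I would apply the shear $\varphi_k : (x, y) \mapsto (x + ky, y)$ for an integer $k$ chosen to minimize $|\Delta|$, where $\Delta := (a' - a) + k(d - 2)$ is the horizontal offset between the left endpoints of the two edges after shearing. Because $\Delta$ ranges over the coset $(a' - a) + (d - 2)\,\Z$, one can achieve $|\Delta| \leq \lfloor (d-2)/2 \rfloor$; in particular, when $d = 3$ this forces $\Delta = 0$ exactly. Then the left boundary $g_L$ of $P^{(1)}$ is convex on $[1, d-1]$, so $g_L(j) + kj$ is convex in $j$, and its maximum over $j \in \{1, \ldots, d-1\}$ is attained at an endpoint; a dual concavity argument applies on the right. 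Hence the common integer interior of all $d - 1$ rows of $\varphi_k(P^{(1)})$ contains $\{\max(a + k,\, a' + k(d-1)),\, \ldots,\, \min(b + k,\, b' + k(d-1))\}$.

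A short case analysis on the signs of $\Delta$ and of $\Delta + (b' - a') - (b - a)$ then shows this interval has length at least $(b - a) - |\Delta| \geq (2d - 3) - \lfloor (d - 2)/2 \rfloor$, which is $\geq d$ for all $d \geq 3$. This produces $d + 1$ consecutive common integers, hence a crystal of length $d$ in $\varphi_k(P)$.

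The main obstacle is a discretization constraint: shearing shifts $\tau_k$ relative to $\tau_1$ only by integer multiples of $d - 2$, so for $d \geq 4$ one cannot generally force $\Delta = 0$. The numerical threshold $2d - 2$ on the number of lattice points is calibrated precisely so that the slack $b - a \geq 2d - 3$ absorbs the worst-case residue $|\Delta| \leq (d - 2)/2$; for $d = 3$ the argument closes instead because $d - 2 = 1$ permits exact alignment.
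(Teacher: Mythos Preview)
Your proposal is correct and follows essentially the same approach as the paper: choose a horizontal shear to minimize the offset between $\tau_1$ and $\tau_k$ (which can be brought to at most $\lfloor(d-2)/2\rfloor$ since a unit shear shifts the relative position by $d-2$), then count the overlapping columns. Your treatment is more careful than the paper's in two respects: you first argue that $\tau_1,\tau_k$ must be horizontal (the paper takes this from the ambient setup), and you make explicit the convexity step ensuring that aligned columns at heights $1$ and $d-1$ yield interior lattice points at every intermediate height, which the paper leaves implicit. One small wording issue: a convex polygon can have \emph{two} parallel edges, not one, so your sentence ``a convex polygon has at most one edge perpendicular to any given direction'' should instead say that horizontal edges of a convex polygon can occur only at the extreme heights $y=1$ and $y=d-1$ of $P^{(1)}$; your conclusion that $\tau_1,\tau_k$ are the top and bottom edges is nonetheless correct.
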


    \begin{proof}
        In the case that there does not already exist such a crystal in $P$, we argue that there exists a shear transformation described by, for some $\ell \in \Z$,
        \[\begin{pmatrix}
            x \\ y
        \end{pmatrix}\mapsto \begin{pmatrix}
            1 & \ell \\
            0 & 1
        \end{pmatrix} \begin{pmatrix}
            x \\ y
        \end{pmatrix}\]
        that transforms $P$ into a polygon $P'$ that ``better aligns'' the $2d - 2$ points on $\tau_1$ and $\tau_k$. 
        
        Observe that a transformation with $\abs{\ell} = 1$ shifts the relative $x$-coordinates of the two edges by $d - 2$. Therefore, by the pigeonhole principle, there exists some shear such that only at most $\floor{\frac{d - 2}{2}}$ on $\tau_1$ are not aligned with points on $\tau_k$. Then, the number of aligned points must satisfy
        \[(2d - 2) -  \floor{\frac{d - 2}{2}} \geq \frac{3}{2}d - 1 \geq d.\]
        Note that the last inequality follows from $d \geq 3$. Thus, we guarantee there is a crystal of length $d$ in some polygon equivalent to $P$. See Figure~\ref{figure:shear} for an example of such a shear.
        \end{proof}

        \begin{figure}[hbt]
            \centering
            \includegraphics[width=0.9\linewidth]{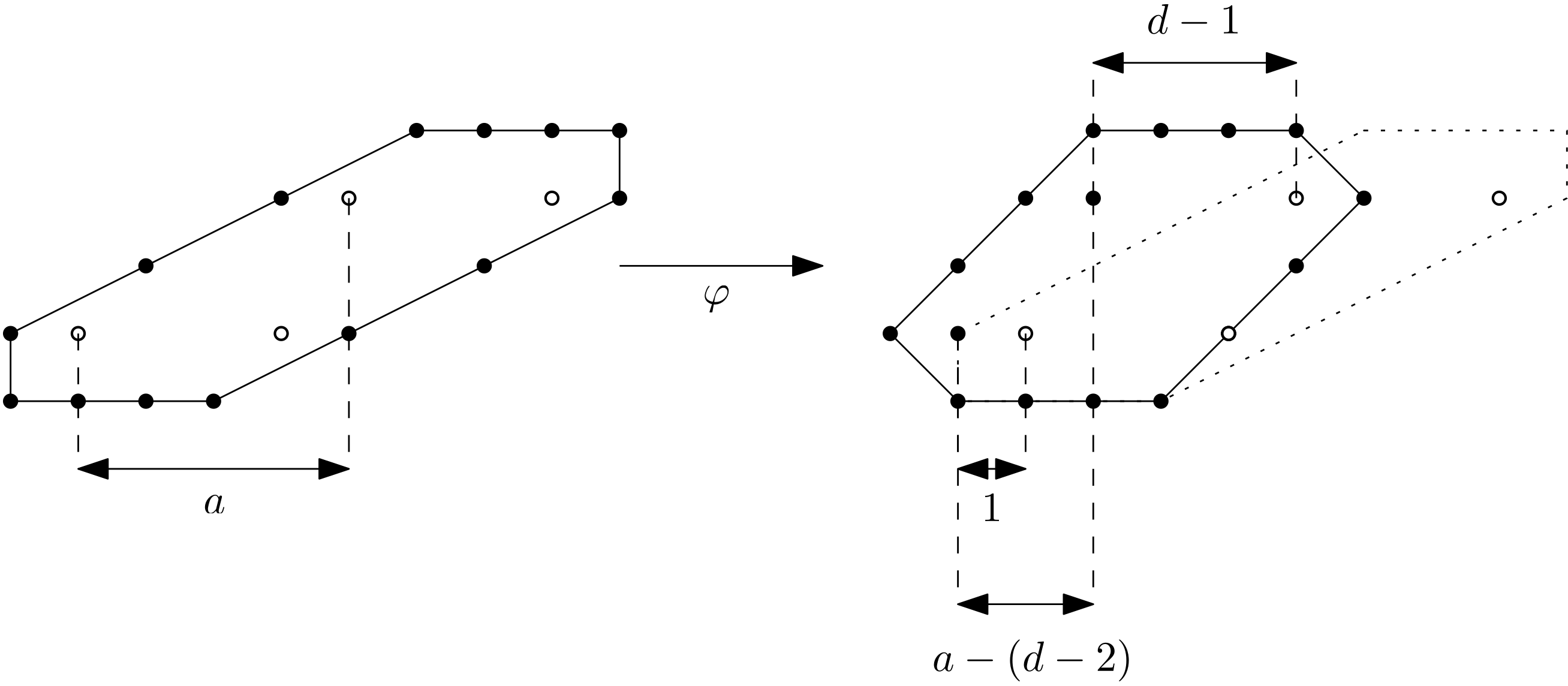}
            \caption{A shear transformation given by $\ell = -1$, where $\tau_1$ and $\tau_k$ relatively shift by $d - 2$.}
            \label{figure:shear}
        \end{figure}

    \begin{lemma}\label{lma:hascrystal}
    Fix $d \geq 3$ and $g \geq d^3$. Then, any polygon $P$ with genus $g$ and $\lw\left(P\right) = d$ contains a crystal of length $d$. 
    \end{lemma}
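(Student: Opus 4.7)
The plan is to verify the hypothesis of Proposition~\ref{prop:crystalcondition} for $P^{(1)}$, which would yield (via the shear given there) a $\Z$-affine equivalent of $P$ containing the desired crystal of length $d$. Write $\ell_j$ for the number of lattice points of $P^{(1)}$ at height $y = j$; since $P$ is maximal with $\egon(P) = d$ and is not equivalent to $d\Sigma$ or $2\Upsilon$, the interior polygon $P^{(1)}$ is two-dimensional with $\lw(P^{(1)}) = d-2$ realized in the $y$-direction. Thus $P^{(1)} \subset \R \times [1, d-1]$ and $g = \sum_{j=1}^{d-1} \ell_j$. By the symmetry $y \mapsto d - y$, it suffices to show $\ell_1 \geq 2d - 2$.

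I would argue by contradiction, assuming $\ell_1 \leq 2d - 3$. The core step is to use the maximality of $P$ to bound the first width-increase $\Delta_2 := \ell_2 - \ell_1$ by $\ell_1 + 1$. Write the primitive directions of the two $P^{(1)}$-edges adjacent to the bottom row as $(-p_L, q_L)$ and $(p_R, q_R)$. When $\ell_1 \geq 2$, a direct computation places the intersection of the relaxed left-bottom and right-bottom half-planes at $y$-coordinate $1 - (\ell_1 + 1)/(p_L + p_R)$ in the base case $q_L = q_R = 1$, with analogous expressions in general. Since this value lies strictly between $0$ and $1$ whenever $p_L + p_R > \ell_1 + 1$ (and contains no integer), the relaxed polygon would acquire a non-lattice vertex, contradicting maximality of $P$; hence $\Delta_2 \leq p_L + p_R \leq \ell_1 + 1$. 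The degenerate case $\ell_1 = 1$ (single bottom vertex) is handled by the per-vertex determinant condition at that vertex, which likewise forces $\Delta_2 \leq 2 = \ell_1 + 1$.

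Convexity of $P^{(1)}$ then implies the sequence $\ell_j$ is concave, so its increments $\Delta_j = \ell_j - \ell_{j-1}$ are non-increasing in $j$. In particular, $\Delta_j \leq \Delta_2 \leq \ell_1 + 1$ for every $j \geq 2$, which yields $\ell_j \leq \ell_1 + (j-1)(\ell_1 + 1)$. Summing,
\[ g = \sum_{j=1}^{d-1} \ell_j \;\leq\; (d-1)\ell_1 + (\ell_1 + 1)\frac{(d-1)(d-2)}{2}, \]
and substituting the worst case $\ell_1 = 2d - 3$ gives $g \leq (d-1)(d^2 - d - 1) = d^3 - 2d^2 + 1 < d^3$, contradicting $g \geq d^3$. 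Thus $\ell_1 \geq 2d - 2$, and by the symmetric argument $\ell_{d-1} \geq 2d - 2$, so Proposition~\ref{prop:crystalcondition} furnishes the crystal.

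The main obstacle will be cleanly deriving the maximality-based inequality $p_L + p_R \leq \ell_1 + 1$ uniformly across all primitive edge directions and across the $\ell_1 = 1$ case where the bottom is a vertex rather than an edge. This requires a careful description of the relaxed polygon $(P^{(1)})^{(-1)}$ near the bottom, tracking both the per-vertex lattice condition and the possibility of new non-lattice vertices arising from non-adjacent relaxed edges crossing inside the relaxed region.
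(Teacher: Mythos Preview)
Your proposal has a genuine gap: the claim that convexity of $P^{(1)}$ forces the sequence $(\ell_j)$ to be concave is false, even for interior polygons of maximal $P$.  Convexity makes the \emph{real} slice widths $W(j)=R(j)-L(j)$ concave, but the lattice counts $\ell_j=\lfloor R(j)\rfloor-\lceil L(j)\rceil+1$ involve floors and ceilings and need not be.  A concrete counterexample: take $P=\conv\{(-1,0),(5,4),(-1,4)\}$, a maximal non-hyperelliptic polygon with $\lw(P)=\egon(P)=4$ and $P^{(1)}=\conv\{(0,1),(3,3),(0,3)\}$.  Here $\ell_1=1$, $\ell_2=2$, $\ell_3=4$, so $\Delta_2=1<\Delta_3=2$ and concavity fails.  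Your step~1 bound $\Delta_2\le\ell_1+1$ happens to hold in this example, but the passage from $\Delta_2\le\ell_1+1$ to $\Delta_j\le\ell_1+1$ for all $j$ via concavity collapses.  Without that step, the summation bound on $g$ is unjustified.  (Separately, the ``analogous expressions in general'' for $q_L,q_R>1$ in step~1 is a second, smaller gap that would need care.)

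The paper's proof avoids row-count combinatorics entirely.  It works with $P$ rather than $P^{(1)}$: assuming fewer than $2d-2$ interior points at height $y=1$, it uses the slopes of the two $P$-edges rising from the bottom edge (of length $b$) to enclose $P$ in a trapezoid of height $d$, then bounds $g\le A(P)$ by the trapezoid area, arriving at $g<d^3$.  This sidesteps both the concavity issue and the relaxed-vertex analysis.  If you want to rescue your approach, one route is to replace the concavity step by the trapezoid bound on $A(P)$ (which dominates $\sum_j\ell_j$ via Pick), but at that point you are essentially reproducing the paper's argument.
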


    \begin{proof}
        Let $\tau_1, \tau_k$ be horizontal edges in $P$ lying on $y = 1$ and $y = d-1$ respectively. Let $a$ be the length of $\tau_k^{(-1)}$ and $b$ the length of $\tau_1^{(-1)}$. For any edge $\tau_i$, let $s_i$ be the magnitude of the slopes of $\tau_{i}^{(-1)}$. If the edge is vertical, then define $s_i = \infty$. Figure~\ref{figure:snconstruction} illustrates these labels.
    
        \begin{figure}[hbt]
            \centering
            \includegraphics[width=0.5\linewidth]{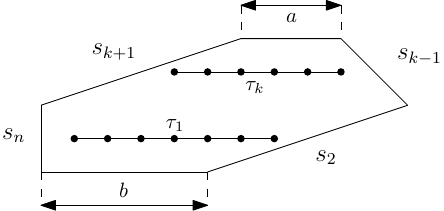}
            \caption{Edges $\tau_k, \tau_1$ and slopes $s_{k+1}$, $s_{k-1}$, $s_2$, $s_n$. Note that $s_n = \infty$.}
            \label{figure:snconstruction}
        \end{figure}

        We prove the contrapositive: if $P$ does not contain a crystal of length $d$, then $g < d^3$. 
        
        Suppose $P$ lacks a crystal of length $d$. By Proposition \ref{prop:crystalcondition}, the absence of a crystal imposes a condition on the number of interior boundary points lying on $\tau_k$ and $\tau_1$. Observe that these quantities are equal to $\ceil{1/s_{k-1}} + \ceil{1/s_{k+1}} + (a-1)$ and $\ceil{1/s_n} + \ceil{1/s_2} + (b-1)$ respectively. The lemma states that one of these quantities must be less than $2d - 2$. Without loss of generality, let 
        \begin{equation}\label{eq:countinteriorpoints}
          \ceil{1/s_{n}} + \ceil{1/s_{2}} + (b-1) < 2d - 2.
        \end{equation}
         Then, we can extend the cut lines that define $s_{n}$ and $s_{2}$ to form a trapezoid of height $d$ with side lengths $b$ and $b + d/s_{n} + d/s_{2}$. The area of this trapezoid upper bounds the area of $P$, denoted $A(P)$. Furthermore, Pick's theorem provides the bound $g \leq A(P)$. Therefore,
        \begin{align*}
            g \leq A\left(P\right) &\leq d\left( \frac{b + b +d/s_{n} + d/s_{2}}{2} \right) \\
            &\leq d\left( \frac{2b +d(\ceil{1/s_n} + \ceil{1/s_2})}{2} \right) \\
            &< d\left( \frac{2b +d(2d - b - 1)}{2} \right) & \text{Using }(\ref{eq:countinteriorpoints})\\
            &= d\left( \frac{2b +2d^2 - bd - d}{2} \right) \\
            &= d\left( \frac{2d^2 + b\left(2-d\right) - d}{2} \right) \\
            &< d^3. & d \geq 3
        \end{align*}
        Thus, we have shown that if $P$ does not contain a crystal of length $d$, then $g < d^3$. This is the contrapositive of the lemma, which completes the proof.
    \end{proof}

    \begin{theorem}\label{thm:existgonequalegon}
        Let $d \geq 3$, and let $P$ be a non-hyperelliptic maximal polygon with $\lw(P) = \egon(P) = d$ and $g \geq d^3$. Then, there exists a triangulation $\Delta$ such that $\dim(\modspace{\Delta}) = \dim \left(\modspace{P}\right)$. Furthermore, any tropical curve $\Gamma$ arising from $\Delta$ has $\gon(\Gamma) = d$.
    \end{theorem}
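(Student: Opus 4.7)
The plan is to assemble the theorem as a direct corollary of the machinery developed in this section. The hypotheses $\lw(P) = \egon(P) = d$ and $g \geq d^3$ are precisely what is needed to feed into Lemmas \ref{lma:hascrystal}, \ref{lma:existsbeehive}, \ref{lma:crystalsn}, and the beehive dimension result (Lemma \ref{lma:beehive}).

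First, I would use Lemma \ref{lma:hascrystal} to conclude that $P$ contains a crystal of length $d$, since $g \geq d^3$ and $\lw(P) = d \geq 3$. With a crystal in hand, I would then invoke Lemma \ref{lma:existsbeehive} to produce a regular beehive triangulation $\Delta$ of $P$ that refines the grid subdivision $\cS_\omega$ on $P^{(1)}$ constructed in Section \ref{sec:delta}. Since $P$ is maximal and non-hyperelliptic, Lemma \ref{lma:beehive} then yields
\[\dim(\modspace{\Delta}) = \dim(\modspace{P}),\]
establishing the first conclusion of the theorem.

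For the gonality statement, I would apply Lemma \ref{lma:crystalsn} to any tropical curve $\Gamma$ dual to $\Delta$. The one hypothesis of that lemma which is not obviously inherited is that $P \not\cong d\Sigma$; however, this is forced by the assumption $\lw(P) = \egon(P)$ together with Proposition \ref{lwisegon}, which tells us $\lw(P) = \egon(P) + 1$ whenever $P \cong d\Sigma$ (with $d \geq 2$). Similarly, $P \not\cong 2\Upsilon$ is automatic since $2\Upsilon$ has genus $4 < 27 \leq d^3$. Thus Lemma \ref{lma:crystalsn} applies and gives $\gon(\Gamma) = \lw(P) = d$, finishing the proof.

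There is essentially no new technical obstacle at this stage, since the work has been done in the preceding three subsections. The only point requiring care is the bookkeeping on the excluded triangle families: one must explicitly observe that $\lw(P) = \egon(P)$ rules out $P \cong d\Sigma$, so that Lemma \ref{lma:crystalsn} is applicable as stated. Everything else is a straightforward chaining of the established implications.
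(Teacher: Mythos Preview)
Your proposal is correct and follows essentially the same route as the paper's proof: invoke Lemma~\ref{lma:hascrystal} to obtain a crystal, Lemma~\ref{lma:existsbeehive} (together with Lemma~\ref{lma:beehive}) to obtain a top-dimensional beehive triangulation $\Delta$, and then Lemma~\ref{lma:crystalsn} to conclude $\gon(\Gamma)=d$. If anything, you are slightly more careful than the paper in explicitly verifying the hypothesis $P\not\cong d\Sigma$ of Lemma~\ref{lma:crystalsn} via Proposition~\ref{lwisegon}.
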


    \begin{proof}
        By Lemma \ref{lma:hascrystal}, $P$ contains a crystal of length $d$. By Lemma \ref{lma:existsbeehive}, $P$ has a beehive triangulation $\Delta$ such that $\Delta$ maximizes the dimension of $\modspace{\Delta}$. In other words, $\dim(\modspace{\Delta}) = \dim (\modspace{P})$. By Lemma \ref{lma:crystalsn}, since $\Delta$ is a refinement of $\cS_m$ by construction in Lemma \ref{lma:existsbeehive}, any tropical curve $\Gamma$ arising from $\Delta$ has $\gon(\Gamma) = d$.
    \end{proof}
    
    Finally, we prove Proposition~\ref{prop:equaldimgreater}.
        
    \begin{proof}[Proof of Proposition~\ref{prop:equaldimgreater}]
        Let $P$ be a maximal polygon with $\dim(\modspace{P}) = \dim(\mndexp{g}{d})$.

        \begin{itemize}

            \item \underline{Case 1: $P \not \cong d\Sigma$.} The result follows from Theorem~\ref{thm:existgonequalegon}.

            \item \underline{Case 2: $P \cong d \Sigma$.} The genus of $d\Sigma$ is 
            \[g(d\Sigma) = \frac{(d - 2)(d-1)}{2} < d^3.\]
        \end{itemize}
    \end{proof}

\section{An Upper Bound on $\dim\left(\mathbb{M}_{g,\underline{d}}^\mathrm{nd}\right)$}
\label{sec:upper}

In this section we establish an upper bound on the dimension of the expected gonality locus 
\[
\mathbb{M}_{g,\underline{d}}^{\mathrm{nd}} 
= \!\!\!\!\!\!\!\!\bigcup_{\substack{g(P) = g \\ \egon(P) = d}} \!\!\!\!\!\!\!\mathbb{M}_P,
\]
where $\mathbb{M}_P$ denotes the moduli space of tropical plane curves dual to regular subdivisions of a maximal lattice polygon $P \subset \mathbb{R}^2$, and the union ranges over equivalence classes of maximal polygons of genus $g$ and expected gonality $d$.

To bound $\dim(\mathbb{M}_{g,\underline{d}}^{\mathrm{nd}})$, it suffices to obtain a uniform upper bound on $\dim(\mathbb{M}_P)$ for such polygons $P$. Recall from Lemma~\ref{lma:column_vector_formula} that if $P$ is maximal then
\begin{equation}\label{eq:dmp}
    \dim(\mathbb{M}_P) = g(P) + r(P) - 3 - c(P),
\end{equation}
where $g(P)$ is the number of interior lattice points $P$, $r(P)$ is the number of boundary lattice points of $P$, and $c(P)$ is number of columns. Moreover, this dimension is invariant under unimodular transformations of $P$.

Our strategy is to first bound $r(P)$ in terms of the number of interior points and the lattice width of $P$. To this end we introduce the \emph{truncation} of $P$, denoted $P_{\mathrm{trunc}}$, which records the geometry of $P$ near its extremal horizontal and vertical slices. This simplified model allows us to estimate $r(P)$ by analyzing how far $P$ can deviate from a rectangle of width $d$.

Combining this estimate with equation~\eqref{eq:dmp} yields the following upper bound:

\begin{theorem}\label{lma:maxdim2}
    Let $P$ be a maximal lattice polygon with $g$ interior points and lattice width $d>1$. Then
    \[
        \dim\!\left(\mathbb{M}_{P}\right) \;\leq\; g + \frac{2g}{d-1} + 2d - 3.
    \]
\end{theorem}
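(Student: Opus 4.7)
The backbone of the proof is Lemma~\ref{lma:column_vector_formula}, which for any maximal non-hyperelliptic polygon $P$ gives
\[
\dim(\mathbb{M}_P) = g + r(P) - 3 - c(P),
\]
so the task reduces to showing the combinatorial inequality $r(P) - c(P) \le \frac{2g}{d-1} + 2d$. The hyperelliptic case corresponds exactly to $d = 2$ (since $\lw(P) = 2$ forces $P^{(1)}$ to lie on a single horizontal line and hence to be at most one-dimensional, i.e., $P$ hyperelliptic), and must be treated separately: here the bound $\dim(\mathbb{M}_P) \le 3g + 1$ is easily verified from Theorem~\ref{thm:ralph_hyperelliptic} and the explicit dimension formulas for the hyperelliptic locus developed in \cite{tropical_hyperelliptic_curves_in_the_plane}.

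For non-hyperelliptic $P$ with $d \ge 3$, after a $\Z$-affine transformation I would assume $P \subseteq \{(x, y) : 0 \le y \le d\}$, with lattice width $d$ realized vertically. Write $\ell_0$ and $\ell_d$ for the lattice lengths of the (possibly degenerate) horizontal bottom and top edges of $P$. Every edge on a lateral chain has primitive direction $(p, q)$ with $q \ge 1$, so a direct accounting shows each lateral chain has total lattice length at most $d$. This yields the easy bound
\[
r(P) \le \ell_0 + \ell_d + 2d.
\]
The remaining content is a bound of the shape $\ell_0 + \ell_d - c(P) \le \tfrac{2g}{d-1}$, which is exactly where the truncation $P_{\mathrm{trunc}}$ enters.

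I would define $P_{\mathrm{trunc}}$ to be the lattice polygon inscribed in $P$ whose horizontal top and bottom edges match those of $P$ and whose lateral sides are as close to vertical as convexity and lattice containment allow. Since $P_{\mathrm{trunc}}^{(1)} \subseteq P^{(1)}$, a row-by-row interior count in the strip $\{1 \le y \le d-1\}$ shows that the interior of $P_{\mathrm{trunc}}$ contains roughly $\frac{d-1}{2}(\ell_0 + \ell_d)$ lattice points, up to a controlled additive correction. This gives $\ell_0 + \ell_d \le \frac{2g}{d-1} + O(1)$ directly. The residual $O(1)$ slack corresponds precisely to horizontal ``excess'' of $P$ beyond $P_{\mathrm{trunc}}$: any such excess produces a column vector of $P$ (for instance, $(0, \pm 1)$ when both horizontal edges have positive length), and so is absorbed by $c(P)$. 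Combining this with the lateral-chain bound completes the inequality $r(P) - c(P) \le \frac{2g}{d-1} + 2d$.

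The main technical obstacle is making the link between ``horizontal excess'' of $P$ and column vectors rigorous and tight, so the additive constants cancel exactly rather than yielding only a $2d + O(1)$ bound. Careful casework is required for degenerate configurations: polygons with no horizontal top or bottom edge (such as triangles in the family $d\Sigma$, where $P_{\mathrm{trunc}}$ degenerates to $P$ itself), the exceptional polygon $2\Upsilon$, and polygons whose lateral chains nearly collapse and force both $\ell_0$ and $\ell_d$ to be small. Once these cases are dispatched, the two estimates combine with Lemma~\ref{lma:column_vector_formula} to yield the desired bound.
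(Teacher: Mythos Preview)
Your high-level outline matches the paper's: reduce to $r(P)-c(P)\le \tfrac{2g}{d-1}+2d$ via Lemma~\ref{lma:column_vector_formula}, establish $r(P)\le a+b+2d$ (your $\ell_0,\ell_d$ are the paper's $a,b$), and bound $a+b$ by applying Pick's theorem to a truncation. You also correctly flag that $d=2$ must be treated separately, since Lemma~\ref{lma:column_vector_formula} requires $P$ non-hyperelliptic.

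The genuine gap is in your mechanism for absorbing the residual $+2$. Your claim that ``horizontal excess produces a column vector, for instance $(0,\pm1)$ when both horizontal edges have positive length'' is false: a sheared parallelogram with both horizontal edges nondegenerate need not admit $(0,\pm1)$ as a column vector. More seriously, there exist maximal polygons with $c(P)=0$ (for example, a long $4\times m$ rectangle with two antipodal $2\Sigma$ corners removed, as in Theorem~\ref{thm: d=4}), so column vectors alone cannot close the gap. The paper instead introduces for each corner of the truncation a quantity $X_i=\tfrac{x_i(y_i-d)-(y_i-\gcd(x_i,y_i))}{d-1}$ and proves the sharper bound $a+b\le \tfrac{2g}{d-1}+2+\sum_i X_i$, together with the key lemma that every \emph{short cut} (a genuine corner with $y_i<d$) satisfies $X_i\le -1$. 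The inequality $c(P)-\sum_i X_i\ge 2$ is then established by a dichotomy on the number of cuts: with $0$ or $1$ cuts the polygon is close enough to a rectangle that $c(P)\ge 2$; with $3$ or $4$ cuts at least two are short so $\sum X_i\le -2$; and the delicate two-cut case needs an auxiliary lemma (maximality forces missing boundary points to come in pairs) plus shearing arguments that reduce the problem to explicit quadrilateral/pentagon/hexagon configurations. None of this is suggested by the ``horizontal excess $\Rightarrow$ column vector'' heuristic, and your proposal as written would not produce a proof without discovering this cut-deficit machinery.
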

With Theorem~\ref{lma:maxdim2} established, we then prove  Theorem~\ref{lma:maxdim2_restate}.

\subsection{Truncations} Fix a maximal lattice polygon $P$ of lattice width $d$ contained in the strip $ \R\times[0,d]$. Let $$x_{\text{min}} = \min_{\left(x,y\right) \in P} \{x\} 
,\hspace{+0.25cm} x_{\text{max}} = \max_{\left(x,y\right) \in P} \{x\}.$$ Then the $\emph{truncation}$  $P_{\text{trunc}}$ of $P$ is the convex hull of the set of points $$\bigl\{\left(x, y\right) \in P: x \in \{x_{\text{min}}, x_{\text{max}}\} \text{ or } y \in \{0, d\}\bigr\}.$$ 
See Figure~\ref{figure:ptrunc} for an example.

\begin{figure}[hbt]
       \centering
       \includegraphics[width=0.8\linewidth]{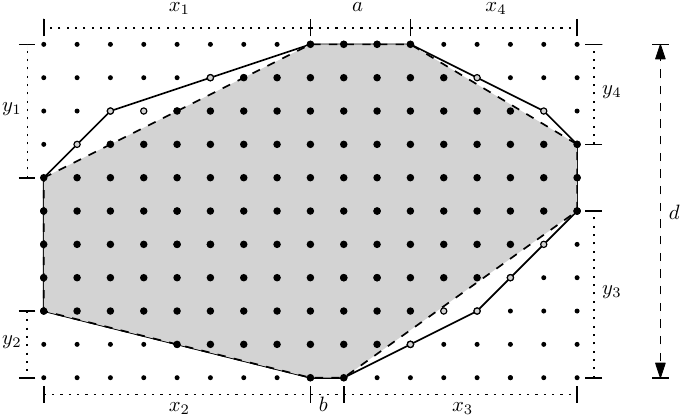}
        \caption{A polygon $P$ (solid) with lattice width $d$ and its truncation $P_{\text{trunc}}$ (dashed).}
        \label{figure:ptrunc}
\end{figure}


    
Realizing $P_{\text{trunc}}$ as the result of removing four triangles with leg lengths $\left(x_1, y_1\right), \left(x_2, y_2\right), \left(x_3, y_3\right), \left(x_4, y_4\right)$ from the smallest rectangle in $\R\times[0,d]$ containing $P_{\text{trunc}}$, we compute the area of $P_{\text{trunc}}$ as
\begin{equation}\label{eq:area}A\left(P_{\text{trunc}}\right) = \left(\frac{x_1 + x_2 + a + b + x_3 + x_4}{2}\right) d -  \left(\frac{x_1y_1 + \dots + x_4y_4}{2}\right).\end{equation}
Furthermore, the number of boundary points of $P_{\text{trunc}}$ is
\[r(P_{\text{trunc}})\!=\!a +b+ (d-y_1-y_2) + (d - y_3 - y_4) + \sum_{i=1}^4\gcd(x_i,y_i).\] Each term in the above sum represents the number of lattice points (minus $1$) on a given edge of $P_{\text{trunc}}$. Rearranging,
\[r\left(P_{\text{trunc}}\right) = a + b + 2d - \left( \sum_{i=1}^4 y_i - \gcd\left(x_i, y_i\right)\right).\]
Applying Pick's theorem to $P_{\text{trunc}}$ yields
\begin{align}
    A\left(P_{\text{trunc}}\right) &= g\left(P_{\text{trunc}}\right) + \frac{r\left(P_{\text{trunc}}\right)}{2} - 1 \label{eq:picks}\\ \nonumber
    &\leq g\left(P\right) + \frac{a + b + 2d - \left( \sum_{i=1}^4 y_i - \gcd\left(x_i, y_i\right)\right)}{2} - 1.
\end{align}
Combining (\ref{eq:area}) and (\ref{eq:picks}), and rearranging the inequality in terms of $a+b$ yields
\begin{equation}\label{abbound}
    a + b \leq \frac{2g\left(P\right)}{d-1}  + 2+\sum_{i=1}^4 X_i ,
\end{equation}
where \begin{equation}\label{eq: xi}
X_i := \frac{x_i \left(y_i -  d\right) - \left( y_i - \gcd\left(x_i, y_i\right) \right)}{d-1}.
\end{equation}

\begin{proposition}\label{lma:rbound} The following inequality holds
\[
r\left(P\right) \leq a + b + 2d, 
\]
with equality exactly when $L_i := \Z \times \{i\}$ intersects $\partial P$ twice for each $1 \le i \le d-1$.
\end{proposition}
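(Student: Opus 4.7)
The plan is to bound $r(P)$ by stratifying the boundary lattice points of $P$ according to their $y$-coordinate. Since $P \subset \R \times [0, d]$ has lattice width $d$ in the $y$-direction, every lattice point of $P$ has integer $y$-coordinate in $\{0, 1, \ldots, d\}$, so
\[
    r(P) = \sum_{i = 0}^{d} \lvert L_i \cap \partial P \rvert.
\]
The body of the proof consists of bounding each summand.

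For the two extremal heights, I would observe that the bottom of $P$ (a vertex or a horizontal edge) coincides with the bottom of $P_\text{trunc}$, which is a segment of lattice length $a$ and therefore contains exactly $a + 1$ lattice points, all lying on $\partial P$. The same argument at the top gives $\lvert L_0 \cap \partial P \rvert = a + 1$ and $\lvert L_d \cap \partial P \rvert = b + 1$; the degenerate cases $a = 0$ and $b = 0$ correspond to a single vertex and are handled uniformly.

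For each intermediate height $1 \leq i \leq d - 1$, I would argue that $\lvert L_i \cap \partial P \rvert \leq 2$. The key point is that a convex polygon has at most two edges perpendicular to the $y$-axis, and because $P$ has lattice width exactly $d$ in that direction, these horizontal edges can only occur at heights $0$ and $d$. Thus the horizontal line $y = i$ is not contained in any edge of $P$, so by convexity it meets $\partial P$ in at most two points, and in particular $\lvert L_i \cap \partial P \rvert \leq 2$.

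Summing these contributions yields
\[
    r(P) \leq (a + 1) + (b + 1) + 2(d - 1) = a + b + 2d,
\]
with equality precisely when every intermediate bound is sharp, i.e., $\lvert L_i \cap \partial P \rvert = 2$ for each $1 \leq i \leq d - 1$. The whole argument is essentially careful bookkeeping; the only non-routine step is justifying the absence of horizontal edges of $P$ at intermediate heights, which is immediate from convexity combined with the lattice width hypothesis. I do not anticipate any real obstacles.
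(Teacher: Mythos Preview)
Your proposal is correct and follows essentially the same approach as the paper: stratify boundary lattice points by height, count $a+1$ and $b+1$ at the extremes, and bound each intermediate slice by $2$ via convexity. Aside from a harmless swap of the labels $a$ and $b$ relative to the paper's convention (it puts $a$ at height $d$ and $b$ at height $0$), your argument is the same, with slightly more detail on why no horizontal edge can occur at an intermediate height.
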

\begin{proof} 
    As represented in Figure~\ref{figure:ptrunc}, $\left|L_d\cap\partial P\right|=a+1$ and $\left|L_0\cap\partial P\right|=b+1$. Furthermore, for each $1\leq i\leq d-1$,  $|L_i\cap\partial P|\leq 2$—else $P$ would fail to be convex. Because 
    \[
        r(P) = \sum_{i=0}^d |L_i \cap \partial P|,
    \]

    \noindent it follows that
    \[
    r(P) \le (a+1) + (b+1) + 2(d-1)=a+b+2d.
    \]
    \end{proof}
    
    By (\ref{abbound}) and Proposition~\ref{lma:rbound}, 
    \begin{equation}\label{eq: boundr}
        r\left(P\right) \leq \frac{2g\left(P\right)}{d-1} + 2 + 2d+ 
\sum_{i=1}^4X_i.
    \end{equation}
    

    \noindent Furthermore, by Lemma~\ref{lma:column_vector_formula} and (\ref{eq: boundr}),
\begin{align}\label{7} \dim\left(\mathbb{M}_P\right) &=g\left(P\right) + r\left(P\right) - 3 - c\left(P\right)\\\nonumber &\leq g\left(P\right) + \frac{2g\left(P\right)}{d-1} +2d - 1  + \left[\sum_{i=1}^4X_i- c\left(P\right)\right].
\end{align}

\subsection{The analysis of cuts}
Call each $\left(x_i, y_i\right)$ a \emph{cut} when $x_i, y_i \geq 1$.  Additionally, say a cut $(x_i,y_i)$ is \emph{short} if $y_i\neq d$. We argue the following lemma.

\begin{lemma}\label{boundxi} Whenever a cut $(x_i,y_i)$ is short,
\[X_i\leq -1.\]
\end{lemma}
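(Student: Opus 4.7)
The plan is to recast the desired inequality $X_i \leq -1$ as a clean nonnegativity statement and then close it with two or three elementary comparisons involving $\gcd(x_i,y_i)$. Since the hypothesis $d>1$ of Theorem~\ref{lma:maxdim2} gives $d-1>0$, clearing the denominator in
\[
X_i \;=\; \frac{x_i(y_i - d) - \bigl(y_i - \gcd(x_i, y_i)\bigr)}{d-1}
\]
turns the target $X_i \leq -1$ into the equivalent statement
\[
x_i(d - y_i) + y_i - \gcd(x_i, y_i) \;\geq\; d - 1.
\]

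Next I would peel off the ``$x_i=1$ piece'' of the left-hand side by writing $x_i(d-y_i) = (d-y_i) + (x_i-1)(d-y_i)$; this lets the $d-y_i$ summand cancel against $y_i$ to leave a bare $d$, and the entire inequality collapses to the very tidy form
\[
(x_i - 1)(d - y_i) \;\geq\; \gcd(x_i, y_i) - 1.
\]
All the content of the lemma is now visible in one line.

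To finish I would string together three facts: (i) shortness of the cut gives $y_i < d$, hence $d - y_i \geq 1$; (ii) the definition of a cut gives $x_i \geq 1$, so $x_i - 1 \geq 0$; and (iii) $\gcd(x_i, y_i) \leq x_i$. Chaining them yields $(x_i - 1)(d - y_i) \geq x_i - 1 \geq \gcd(x_i, y_i) - 1$, which is exactly what is needed. There is no real obstacle here, since after the one algebraic rearrangement above the statement is immediate; the short-cut hypothesis is used in exactly one place, namely to guarantee $d - y_i \geq 1$. As a bookkeeping remark that may be useful later when tracking tightness of the bound in Theorem~\ref{lma:maxdim2}, the inequality $X_i \leq -1$ is an equality precisely when $x_i = 1$, or when $x_i \geq 2$ together with $y_i = d-1$ and $x_i \mid y_i$.
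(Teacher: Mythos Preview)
Your proof is correct and is in fact cleaner than the paper's. The paper argues by splitting into three cases according to whether $y_i\le x_i$ with $x_i\le d-1$, $y_i\le x_i$ with $x_i\ge d$, or $y_i>x_i$; in each case it bounds $\gcd(x_i,y_i)$ by either $y_i$ or $x_i$ as convenient and massages the resulting expression separately. Your single algebraic rearrangement to $(x_i-1)(d-y_i)\ge \gcd(x_i,y_i)-1$ collapses all three cases at once, using only the uniform bound $\gcd(x_i,y_i)\le x_i$ together with $d-y_i\ge 1$ and $x_i\ge 1$. This is strictly shorter and, as you note, makes the equality case transparent, which the paper's case analysis obscures.
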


\begin{proof}
We consider three cases based on the relative sizes of $x_i$ and $ y_i$.
\begin{align*}
\intertext{$\bullet$ \underline{Case 1: $y_i \le x_i$ and $1\le x_i \le d-1$.}}
X_i & = \frac{x_i (y_i - d) - \big( y_i - \gcd(x_i, y_i) \big)}{d-1}\\
& \le \frac{x_i(x_i - d) -(y_i-y_i)}{d-1} && \gcd(x_i,y_i)\le y_i\\
&= \frac{x_i(x_i - d) + d - 1}{d-1} -1 \\
&= \frac{(x_i - (d - 1))(x_i-1)}{d-1} -1 \\
&\le -1 && 1\le x_i \le d-1
\\[.6ex]
\intertext{$\bullet$ \underline{Case 2: $y_i \le x_i$ and $x_i \ge d$.}}
X_i & = \frac{x_i (y_i - d) - \big( y_i - \gcd(x_i, y_i) \big)}{d-1}\\
& \le \frac{x_i((d-1)-d)-(y_i-y_i)}{d-1} && y_i \le d - 1,\ \gcd(x_i,y_i)\le y_i \\
&= \frac{-x_i}{d-1}\\
&< -1 && x_i \ge d
\\[.6ex]
\intertext{$\bullet$ \underline{Case 3: $y_i > x_i$.}}
X_i & = \frac{x_i (y_i - d) - \big( y_i - \gcd(x_i, y_i) \big)}{d-1} \\
& \le \frac{x_i y_i - x_i d - y_i + x_i}{d-1} && \gcd(x_i, y_i) \le x_i \\
&= \frac{y_i(x_i - 1) - x_i(d-1)}{d-1}  \\
& \le \frac{(d-1)(x_i - 1) - x_i(d-1)}{d-1} && y_i \le d - 1\\
&= -1\, .
\end{align*}

\end{proof}

We may interpret $X_i$ as the ``dimension loss'' of $\dim(\modspace{P})$ incurred by a cut; so each short cut contributes to a loss of at least one dimension. With this perspective, it is natural to divide the proof of Theorem~\ref{lma:maxdim2} into two cases, based on the number of cuts admitted by $P_{trunc}$.

\begin{lemma}\label{lma:0134cut} Theorem~\ref{lma:maxdim2} holds if $$\cC:=\left|\{i\in[1,4]:x_i\neq 0\}\right|\in\{0,1,3,4\},$$ or equivalently, if the truncation of $P$ does not have exactly two cuts.
\end{lemma}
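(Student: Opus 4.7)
The plan is to establish the inequality
\[
\sum_{i=1}^4 X_i - c(P) \leq -2,
\]
which, combined with (\ref{7}), yields the dimension bound of Theorem~\ref{lma:maxdim2}. I will case-split on $\mathcal{C} \in \{0, 1, 3, 4\}$ using two tools: Lemma~\ref{boundxi}, which gives $X_i \leq -1$ for any short cut and $X_i \leq 0$ for any long cut; together with the observation that uncut corners contribute $X_i = 0$ and force $P$ to possess full edges along the adjacent sides of the bounding rectangle, yielding axis-aligned column vectors.

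The extreme cases are direct. For $\mathcal{C} = 0$, all four corners of the bounding rectangle lie in $P$, so convexity of $P$ inside the rectangle forces $P = [x_{\min}, x_{\max}] \times [0, d]$; then $\sum X_i = 0$, while this rectangle admits the four column vectors $(\pm 1, 0), (0, \pm 1)$, giving $c(P) \geq 4$ and hence $\sum X_i - c(P) \leq -4$. For $\mathcal{C} = 4$, no cut can be long: a long cut at corner $i$ has $y_i = d$, forcing the adjacent corner on the same vertical side of the rectangle to satisfy $y_j = 0$ and thus be uncut, contradicting $\mathcal{C} = 4$. Hence all four cuts are short and Lemma~\ref{boundxi} gives $\sum X_i \leq -4$.

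For $\mathcal{C} = 3$, I would show that at most one of the three present cuts can be long by a short case check on all pairs: two long cuts sharing a vertical side give vertical-leg sum $2d > d$, while two long cuts in any other configuration force an additional corner to be uncut, in each case contradicting $\mathcal{C} = 3$. Hence at least two cuts are short, and $\sum X_i \leq 0 + (-1) + (-1) + 0 = -2$; the trivial bound $c(P) \geq 0$ then suffices.

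The main obstacle is $\mathcal{C} = 1$, where $\sum X_i$ can be as mild as $0$ (a long cut with $d \mid x_i$), forcing us to produce two explicit column vectors. WLOG place the unique cut at the top-left corner. Since the other three corners are uncut, the points $(x_{\min}, 0), (x_{\max}, 0), (x_{\max}, d)$ all lie in $P$, and convexity of $P$ inside $[x_{\min}, x_{\max}] \times [0, d]$ forces $P$ to contain the full bottom edge $[x_{\min}, x_{\max}] \times \{0\}$ and the full right edge $\{x_{\max}\} \times [0, d]$. A short convexity check then shows that $(0, -1)$ with facet the bottom edge and $(1, 0)$ with facet the right edge are both column vectors of $P$, giving $c(P) \geq 2$ and $\sum X_i - c(P) \leq 0 - 2 = -2$, completing the lemma.
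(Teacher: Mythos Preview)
Your proof is correct and follows essentially the same strategy as the paper: for $\mathcal{C}\in\{0,1\}$ you exhibit two axis-aligned column vectors of $P$ (the paper does this in one stroke, appealing to a figure), and for $\mathcal{C}\in\{3,4\}$ you verify that at least two of the cuts must be short so that Lemma~\ref{boundxi} gives $\sum X_i\le -2$. One small attribution issue: Lemma~\ref{boundxi} only treats short cuts, so your claim ``$X_i\le 0$ for any long cut'' is not literally from that lemma---but it is immediate from the definition, since $y_i=d$ gives $X_i=-(d-\gcd(x_i,d))/(d-1)\le 0$, and the paper uses this same fact without comment.
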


\begin{proof} 
    We prove that
    \begin{equation}\label{Q}
    Q  := \sum_{i=1}^4X_i- c\left(P\right) \leq -2
    \end{equation}
    whenever $\cC\neq 2$. Along with (\ref{7}), (\ref{Q}) verifies Theorem~\ref{lma:maxdim2} in the case $\cC\neq 2$.

    If $\cC\in\{0,1\}$, then $P_{\text{trunc}}$ is a rectangle or a truncation of a rectangle with a single cut, as in Figure \ref{01cut_col_vects}. In either case, since $P$ is convex, it must admit at least two column vectors from the set $\{(1,0),(0,1),(-1,0),(0,-1)\}$, and so $c(P)\geq 2$. Always, each $X_i\leq 0$. It follows that $Q\leq -2$.

\begin{figure}[hbt]
    \centering
    \includegraphics[width=0.5\linewidth]{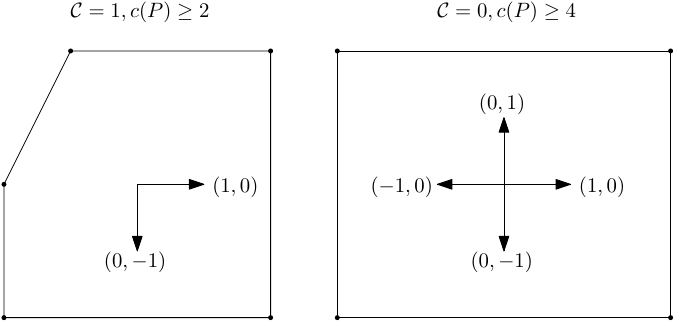}
    \caption{When $\mathcal{C} \in \{0,1\}$, $P$ has at least two column vectors.}
    \label{01cut_col_vects}
\end{figure}

    If $\cC\in\{3,4\}$, then $\left|\{i\in[1,4]:1\leq y_i\leq d-1\}\right|\geq 2$— that is, we are guaranteed to have at least two short cuts. Thus by Lemma~\ref{boundxi}, $\sum_{i=1}^4{X_i}\leq -2$, and so $Q\leq -2$.
\end{proof}


We separately handle the case where the truncation of $P$ admits exactly two cuts. The following lemma will be useful in reducing casework, as it tells us that a polygon which misses one boundary point must in fact miss at least two boundary points.


\begin{lemma}\label{maxr}
    Let $P$ be a maximal lattice polygon of lattice width $d$.
    If $|L_{i} \cap \partial P| \le 1$ for some $1 \le i \le d-1$, then $r(P) \le a + b + 2d - 2$.

\end{lemma}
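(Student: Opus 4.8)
\medskip

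The plan is to refine the counting argument behind Proposition~\ref{lma:rbound}, where we wrote $r(P) = \sum_{i=0}^d |L_i \cap \partial P|$ and bounded each middle slice by $2$. Since $P$ is maximal, it is the relaxation $(P^{(1)})^{(-1)}$ of its interior polygon, and $P^{(1)}$ is two-dimensional (lattice width $d-2 \ge 0$; if $d=2$ the statement is essentially trivial since then $P^{(1)}$ is a segment and the bound can be checked directly). The first step is to observe that if $|L_i \cap \partial P| \le 1$ for some middle index $i$, then in fact $L_i \cap \partial P$ is empty: if $\partial P$ met $L_i$ in exactly one point, that point would be a vertex $v$ of $P$ with both adjacent edges pointing to the same side (say strictly above $y=i$), but maximality forces the edges of $P$ to be the relaxed edges $\tau^{(-1)}$ of $P^{(1)}$, and one checks that a relaxed polygon cannot have a vertex lying strictly between two of the horizontal lines $L_1,\dots,L_{d-1}$ with both incident edges on the same side — such a configuration would make $v$ the relaxation of a boundary point of $P^{(1)}$ at height $i$, contradicting that $P^{(1)} \subseteq \R \times [1,d-1]$ already meets $L_i$. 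So $|L_i \cap \partial P| \le 1$ implies $|L_i \cap \partial P| = 0$, i.e.\ the line $L_i$ misses $P$ entirely.

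\medskip

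The second step exploits convexity: if $L_i$ misses $P$ for some $1 \le i \le d-1$, then since $P$ has lattice width $d$ (so $P$ meets both $L_0$ and $L_d$, and more precisely has boundary points on the lines $y=0$ and $y=d$), the set of heights $j \in \{0,1,\dots,d\}$ for which $L_j$ meets $P$ is an interval of integers, and it omits $i$; but it contains $0$ and $d$, a contradiction unless $i \in \{0,d\}$. This shows no interior horizontal line can be entirely missed — hence the situation $|L_i \cap \partial P| \le 1$ cannot happen at all for a maximal polygon of lattice width exactly $d$, so the implication in the lemma is vacuously satisfied, \emph{or} (depending on the precise intended reading) the only way to make sense of the hypothesis is to allow $P$ of lattice width $d$ whose interior has smaller lattice width, in which case I would instead argue as follows: the slice $L_i$ with $|L_i\cap\partial P|\le 1$ splits the remaining $d$ horizontal lines into those above and those below; by convexity and the previous paragraph every other middle line $L_j$ ($j \neq i$) still satisfies $|L_j \cap \partial P| \le 2$, but now I claim at least one more middle line, adjacent to the deficient one, also has $|L_j \cap \partial P| \le 1$. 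Indeed, if $L_i$ meets $\partial P$ in the single point $v$ (a vertex), then one of $L_{i-1}, L_{i+1}$, say $L_{i+1}$, lies on the ``thin'' side, and the edge of $P$ through $v$ has small enough slope that it meets $L_{i+1}$ before escaping $P$'s horizontal extent — using that $P = (P^{(1)})^{(-1)}$ controls all edge slopes — giving $|L_{i+1}\cap \partial P| \le 1$ as well. Summing $r(P) = \sum_{j=0}^d |L_j \cap \partial P| \le (a+1) + (b+1) + 1 + 1 + 2(d-3) = a + b + 2d - 2$.

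\medskip

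The main obstacle I anticipate is pinning down exactly which geometric configuration the hypothesis ``$|L_i \cap \partial P| \le 1$'' is meant to capture for a \emph{maximal} polygon — as sketched above, for a maximal polygon of lattice width exactly $d$ it is somewhat delicate whether this can occur at all, and the cleanest route is probably to invoke the structure theorem $(P^{(1)})^{(-1)} = P$ to reduce everything to a statement about the relaxed edges: each edge $\tau_j^{(-1)}$ of $P$ is parallel to an edge $\tau_j$ of $P^{(1)}$ and offset outward by lattice distance one, so the vertices $v_j^{(-1)}$ of $P$ are completely determined by $P^{(1)}$, and a vertex lying between two middle lines with both edges on one side would force the corresponding vertex of $P^{(1)}$ to lie on the boundary line $L_i$ of the strip $\R\times[1,d-1]$ containing $P^{(1)}$ while both its edges head to the same side — which is fine, but then \emph{its relaxation} pokes out to height $i$, and the two relaxed edges, having slopes bounded in terms of their lattice lengths, each cross the next line $L_{i\mp 1}$. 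Making the slope bound quantitative enough to guarantee the crossing happens within the horizontal extent of $P$, rather than outside it, is the one place a genuine (if short) computation is needed; everything else is convexity bookkeeping on the slice counts $|L_j \cap \partial P|$.
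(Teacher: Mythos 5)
There is a genuine gap, stemming from a misreading of the notation. In the paper, $L_i := \Z\times\{i\}$ is the set of \emph{lattice} points at height $i$, not the full horizontal line $\R\times\{i\}$. The hypothesis $|L_i\cap\partial P|\leq 1$ therefore says that at most one of the two points where $\partial P$ meets the line $y=i$ is a lattice point — a situation that easily occurs for maximal $P$ of lattice width exactly $d$ (e.g.\ a non-horizontal edge of slope $1/n$ with $n\geq 2$ crosses several heights without hitting a lattice point at each). Your first step, which concludes from $|L_i\cap\partial P|\leq 1$ that $v$ is a vertex of $P$ with both adjacent edges on the same side, and your second step, which infers that $L_i$ ``misses $P$ entirely'' and tries to derive a contradiction from convexity, are both built on this misreading; neither conclusion follows from the actual hypothesis. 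The fallback branch carries the same confusion forward (``if $L_i$ meets $\partial P$ in the single point $v$ (a vertex)''), so the argument does not recover.

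The paper's proof runs on an entirely different engine. From Proposition~\ref{lma:rbound}, the hypothesis already gives $r(P)\leq a+b+2d-1$. Assuming equality for contradiction forces exactly one middle height to miss exactly one boundary lattice point; that miss must occur on a single non-horizontal edge $\tau_k$ with direction vector $(p,q)$, and the count of missed points being $1$ forces $q=2$ and $p$ odd. One then replaces $\tau_k$ with the two edges of direction $(\frac{p+1}{2},1)$ and $(\frac{p-1}{2},1)$ to obtain a convex lattice polygon $P'\supsetneq P$; the added triangle has area $\frac12$, hence zero interior points, so $P'$ has the same interior polygon as $P$ but strictly more boundary points — contradicting maximality of $P$. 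None of this appears in your sketch, and the key mechanism (an extrusion contradicting maximality) is what you would need to find. A useful hint if you revisit this: the one place maximality enters is precisely to rule out the borderline case $r(P)=a+b+2d-1$, so aim your argument at showing that configuration cannot be maximal.
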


\begin{proof}
    By Proposition~\ref{lma:rbound}, we know that $r(P) \le a + b + 2d$, and in our case, there is a height $L_{i}$ for some $1 \le i \le d-1$ in the lattice which does not intersect both the left and right side of $\partial P$, so equality does not hold. Therefore, $r(P) \le a + b + 2d - 1$. 

    Suppose, seeking a contradiction, that $r(P) = a + b + 2d - 1$. By the same argumentation as in Proposition~\ref{lma:rbound}, this means that $|L_{i} \cap \partial P| = 1$, and $|L_j \cap \partial P| = 2$ for $j \in \Z \cap [1,d-1] \setminus \{i\}$. By construction, $a+1 = |L_d \cap \partial P|$ and $b+1 = |L_0 \cap \partial P|$.

    Label the edges of $P$ in counterclockwise order with $\tau_1, ..., \tau_n$, and let $k$ be the (unique) integer $1\le k\le n$ for which $\tau_k$ ``misses'' a lattice point, i.e., ${\tau_k} \cap (\R \times \{i_0\}) \neq \emptyset,$ but ${\tau_k} \cap L_{i_0} = \emptyset$. Via reflections and rotations of $P$, we may suppose without loss of generality that $\alpha_{k}$, the slope of $\tau_k$, is positive. By similar reasoning, we may furthermore assume that $\tau_k$ lies in the ``bottom right'' of $P$—i.e., corresponding to the cut $(x_3,y_3)$ as depicted in Figure~\ref{figure:ptrunc}. In general, take $\alpha_i$ to be the slope of $\tau_i$. Let $(p,q)$ be the vector describing $\tau_k$, where $p, q > 0$. The number of missed boundary points on $\tau_k$ is
    \[
    (q + 1 ) - (\gcd(p,q) + 1) = 1.\]

    Rearranging, we have $\gcd(p,q) = q - 1$, so $q\neq 1$. In particular, $q - 1 \mid q$, which implies that  $q = 2$. Therefore, $\gcd(p,q)=\gcd(p,2) = 1$, implying that $p$ is odd. 
    
    We may now introduce two new line segments, $\tau_{k,1}$ and $\tau_{k,2}$, with associated vectors $(\frac{p+1}{2}, 1)$ and $(\frac{p-1}{2}, 1),$ respectively, and consider the extrusion $P' \supset P$ with edge cycle $\tau_1, ..., \tau_{k-1}, \tau_{k,1}, \tau_{k,2}, \tau_{k+1}, ..., \tau_{n}$.
    See Figure \ref{fig:rmax} for a depiction of this extrusion. We first prove that $P'$ is convex.

    \begin{figure}[hbt]
    
        \centering
        \includegraphics[width=0.5\linewidth]{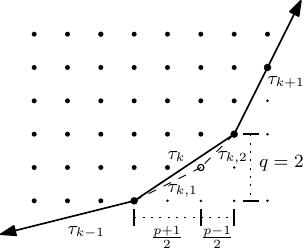}
        \caption{}
        \label{fig:rmax}
    \end{figure}

    Assume for now that $\alpha_{k+1} > \alpha_k$. Since $\tau_{k+1}$ does not miss any lattice points, it satisfies $\alpha_{k+1} = \frac{1}{m}$ for some $m \in \Z$. We have that $\frac{2}{p}=\alpha_k<\alpha_{k+1}=\frac{1}{m}$, and so $p>2m$—implying that $p-1\geq2m$. In particular,
    \[
        \frac{2}{p+1}=\alpha_{k,1}<\alpha_{k,2}=\frac{2}{p-1}\leq\frac{1}{m}=\alpha_{k+1},
    \]
    or simply \begin{equation}\label{eq:alphak+1}
        \alpha_{k,1}<\alpha_{k,2}\leq\alpha_{k+1}.
    \end{equation}

    If $\alpha_{k-1}\leq0$ or $\alpha_{k-1}>\alpha_k$, then $P'$ is immediately convex—this follows from (\ref{eq:alphak+1}) if $\alpha_{k+1}>\alpha_k$, and trivially otherwise. If $0<\alpha_{k-1}<\alpha_k$, then a similar argument to the above yields 
    \[
    \alpha_{k-1}\leq\alpha_{k,1}<\alpha_{k,2}.
    \]
    Again, it follows that $P'$ is convex.

    The difference between $P'$ and $P$ is a triangle formed by segments $\tau_k, \tau_{k,1}, \tau_{k,2}$; denote this triangle by $T$. One checks that $T$ has area $\frac{1}{2}$. By Pick's theorem, $A(T) = g(T) + \frac{r(T)}{2} - 1 = \frac{1}{2}$. It follows that $g(T)=0$. Thus, $P'$ does not gain any interior points from $P$, so $P^{(1)}=P'^{(1)}$, yet $P'$ has exactly one more boundary point than $P$. Therefore $P$ is strictly contained in another lattice polygon of the same genus, contradicting its supposed maximality. Therefore if $r(P)\neq a+b+2d$, then $r(P)\leq a+b+2d-2$.
\end{proof}

\begin{lemma}\label{2cuts} Theorem ~\ref{lma:maxdim2} holds if $\cC=2$.
\end{lemma}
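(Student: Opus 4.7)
The plan is to establish $Q := \sum_{i=1}^4 X_i - c(P) \leq -2$, paralleling the argument of Lemma~\ref{lma:0134cut}. The first step is to split based on whether the two cuts are short (meaning $y_i<d$) or long (meaning $y_i=d$). If both cuts are short, Lemma~\ref{boundxi} gives $\sum X_i \leq -2$, and since $c(P) \geq 0$ we conclude $Q \leq -2$ immediately.

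The substantive work occurs when at least one of the two cuts is long, since a long cut contributes only $X_i \in [-1, 0]$. Direct computation yields
\[
X_i \;=\; -\frac{d - \gcd(x_i, d)}{d-1},
\]
so $X_i = 0$ precisely when $d \mid x_i$ and $X_i = -1$ precisely when $\gcd(x_i,d) = 1$. I would then enumerate the possible configurations of the two cuts in $P_{\text{trunc}}$. First, two long cuts sharing a vertical side (TL together with BL, or TR with BR) are impossible: both long cuts would force the removal of the entire vertical strip at $x_{\min}$ (or at $x_{\max}$), contradicting the definition of these extremes. The remaining configurations are (a) two long cuts on the same horizontal side, yielding a trapezoid; (b) two long cuts in a diagonal configuration, yielding a parallelogram-type shape; and (c) one short cut and one long cut.

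In each configuration, the goal is to produce enough column vectors of $P$ so that, combined with the $\sum X_i$ contribution, one has $Q \leq -2$. The key geometric observation is that a long cut with $d \mid x_i$ produces a slanted edge of lattice slope $1/k$ (where $k=x_i/d$), containing a lattice point at every integer height, which lets $(\pm 1, 0)$ serve as a column vector with respect to the opposite edge. Long cuts with $d \nmid x_i$ have strictly negative $X_i$, providing the compensating negative contribution in lieu of a column vector. Sub-case (c) is the simplest: the short cut already contributes $X_i \leq -1$, so only one column vector—typically along a vertical portion of the boundary not disrupted by the long cut—is needed to close the gap.

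The main obstacle will be the careful geometric verification that the column vectors promised above actually exist in every sub-case, especially the diagonal long-long case where the two slanted sides may have incompatible slopes. As a fallback, I would invoke Lemma~\ref{maxr}: if some lattice line $L_i$ with $1 \leq i \leq d-1$ intersects $\partial P$ in at most one point, then $r(P) \leq a + b + 2d - 2$, which tightens the preceding dimension bound by $2$ and lets the weaker inequality $\sum X_i - c(P) \leq 0$ suffice. I therefore expect the final proof to be a hybrid: explicit column vector constructions in the main trapezoidal and parallelogram sub-cases, and Lemma~\ref{maxr} invoked to handle any residual configurations where the cut structure forces a missed lattice line.
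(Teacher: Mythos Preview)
Your overall target, namely $Q=\sum X_i-c(P)\le -2$, is correct, and your both-short sub-case is handled exactly as the paper does. From there, however, your plan diverges substantially from the paper's proof and carries a genuine gap.

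The paper does \emph{not} enumerate long/short cut configurations of $P_{\mathrm{trunc}}$. Instead it invokes Lemma~\ref{maxr} at the very start (not as a fallback) to reduce to the case $r(P)=a+b+2d$, which forces every edge of $P$ itself to have slope $0$ or $1/k$. It then applies $\Z$-affine \emph{shears} to $P$: since $Q$ is invariant under such maps, one may shear so that the leftmost non-horizontal edges of $P$ either produce two short cuts (done by Lemma~\ref{boundxi}) or satisfy $n_1-n_\ell\le 1$, forcing at most two non-horizontal edges per side. This reduces to $P$ being a quadrilateral, pentagon, or hexagon, each dispatched by one further shear together with an explicit column vector.

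Your plan lacks this shearing device, and without it the column-vector step is where the argument breaks. Column vectors are defined for $P$, not for $P_{\mathrm{trunc}}$; in the long-cut cases $P$ may bulge beyond $P_{\mathrm{trunc}}$ into the cut triangles and acquire several edges along what is a single hypotenuse of $P_{\mathrm{trunc}}$. Your claim that ``a long cut with $d\mid x_i$ produces a slanted edge of slope $1/k$ \dots\ which lets $(\pm 1,0)$ serve as a column vector'' is a statement about an edge of $P_{\mathrm{trunc}}$, and there is no reason it is an edge of $P$, nor that the required containment $v+\bigl((P\setminus\tau)\cap\Z^2\bigr)\subset P$ holds for any actual facet $\tau$ of $P$. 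The diagonal long-long case you flag as ``the main obstacle'' is precisely where this bites: $P$ can have arbitrarily many edges on each slanted side, and no direct column vector is available. The paper sidesteps all of this by shearing a long cut into two short cuts, so Lemma~\ref{boundxi} applies and no column vector is needed for that side.

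Your proposed fallback to Lemma~\ref{maxr} does not close the gap either: you would need to show that every residual configuration forces some $L_i$ to miss $\partial P$, but a parallelogram-type $P$ with long diagonal cuts and $d\mid x_i$ can perfectly well have $|L_i\cap\partial P|=2$ for all $i$, so Lemma~\ref{maxr} gives nothing there. The missing ingredient is the shear.
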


\begin{proof}


    Let $\cC = 2$, and suppose that $P$ does \textit{not} satisfy the inequality given by Theorem~\ref{lma:maxdim2}. By Lemma ~\ref{maxr}, our supposition implies that $r(P)$ is maximized. Again. by (\ref{7}), it suffices to show that $Q(P)\leq-2$, where $$Q(P):=\sum_{i=1}^4X_i- c\left(P\right)$$ is defined just as in (\ref{Q}). When $P$ is understood, we write simply $Q$ for $Q(P)$, and note that $Q$ is invariant under $\Z$-affine transformations of $P$. Since $r(P)$ is maximized, each edge $\tau$ of $P$ has $\operatorname{slope}(\tau)=0$ or  $\operatorname{slope}(\tau) = 1/k$ for some  $k\in \Z$, where we use the convention that $1/0$ is the slope of a vertical edge.

Label the leftward non-horizontal edges of $P$ moving counterclockwise from top to bottom as $\tau_1,\dots,\tau_\ell$. Let $n_i$ be the integer such that $\operatorname{slope}(\tau_i)=1/n_i$. If any two $n_i$ have opposite sign, then $P$ admits two short cuts along its western side—so we may immediately invoke Lemma~\ref{boundxi} to say that $Q\leq-2$, and thus Theorem~\ref{lma:maxdim2} holds. Else, via reflections, we may assume without loss of generality that each $n_i\leq 0$. Additionally, since $P$ is convex, $n_1>\dots>n_\ell$. See Figure~\ref{figure:UB1}.

    \begin{figure}[hbt]
        \centering
        \includegraphics[page = 1, width=0.9\linewidth]{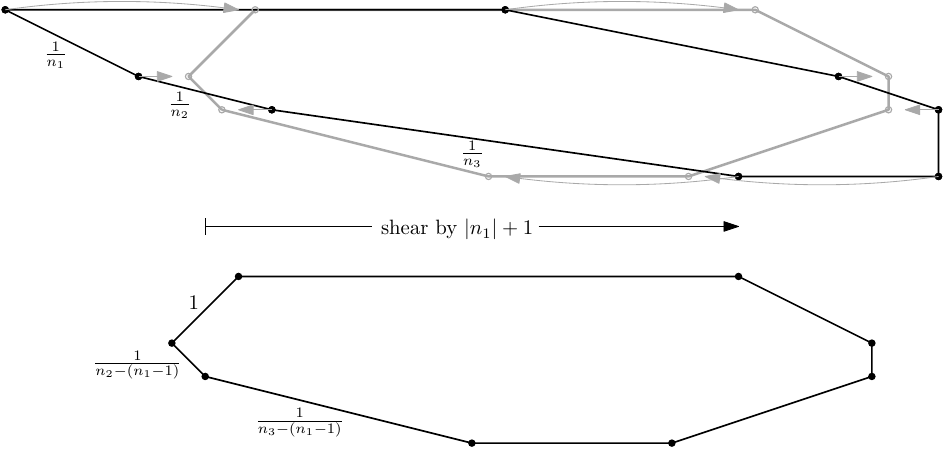}
        \caption{The polygon $P$ and its image under shearing.}
        \label{figure:UB1}
    \end{figure}

If $n_1 - n_\ell \ge 2$, then we may shear $P$ by $|n_1| + 1$ to the right, obtaining, for example, the lower lattice polygon depicted in Figure \ref{figure:UB1}. This sheared polygon $P'$ is the image of $P$ under the $\Z$-affine transformation given by the matrix \[\begin{pmatrix}
1 & |n_1|+1 \\
0 & 1
\end{pmatrix}.\]
Letting $\tau_1',\tau_\ell'$ be the images of $\tau_1,\tau_\ell$ under this shear, it follows that $\operatorname{slope}(\tau_1')=1$ while $\operatorname{slope}(\tau_\ell')<0$. This sheared polygon thus has two cuts along its left-hand side—both of which are short—so by Lemma \ref{boundxi}, $Q(P)=Q(P')\leq-2$, and the upper bound from Theorem \ref{lma:maxdim2} holds.

        

Playing a similar game on the eastern side, it suffices to merely consider when $\ell\leq 2$—that is, when $P$ has either $4, 5$, or $6$ edges, with at most two non-horizontal edges on both the left and right halves of $P$.

If $P$ is a quadrilateral, then it has column vectors $(1,0)$ and $(-1,0)$. Again, $Q\leq-2$, so Theorem \ref{lma:maxdim2} holds.

        

If $P$ is a pentagon, then without loss of generality suppose $\ell=2$—i.e., there are two leftward non-horizontal edges, and one rightward non-horizontal edge. See Figure~\ref{figure:UB3}. Shearing by $|n_1|$ to the right, we obtain the lower polygon $P'$ in Figure~\ref{figure:UB3}, which has a short cut on the bottom left corner, and admits the column vector $(1,0)$. We conclude via Lemma~\ref{boundxi} that $Q(P)=Q(P')\leq -2$, and thus Theorem~\ref{lma:maxdim2} holds.

    \begin{figure}[hbt]
        \centering
        \includegraphics[page = 2, width=0.9\linewidth]{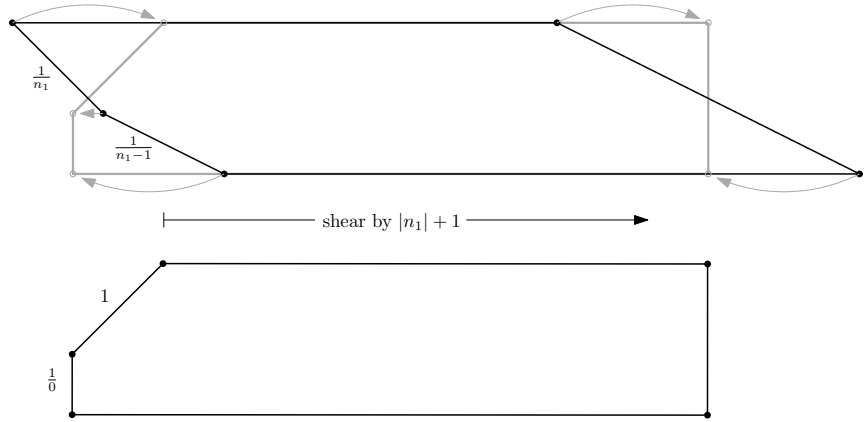}
        \caption{ $P$ is a pentagon.}
        \label{figure:UB3}
    \end{figure}
Finally, suppose $P$ is a hexagon. Label the two rightward non-horizontal edges of $P$ moving counterclockwise from bottom to top as $\sigma_1,\sigma_2$. Let $k_i$ be the integer such that $\operatorname{slope}(\sigma_i)=1/k_i$. By our previous work, we may assume that $k_2=k_1-1$. For example, $P$ may look as depicted in Figure \ref{figure:UB4}. Shearing $P$ by $|n_1|$ to the right and letting $m:= k_1-|n_1|$ yields the equivalent polygon $P'$ visualized in the center of Figure~\ref{figure:UB4}. Here, $1/m$ encodes the slope of the bottom right side of $P$ after shearing.

Always $P'$ admits a short cut on its lower-left corner. When $m = 0$ or $1$, $P'$ has short cuts in the top-right and bottom-right corners respectively. When $m \ge 2$, $P'$ admits the column vector $(1,1)$. Finally, when $m \le -1$, $P$ admits the column vector $(1,-1)$. See the bottom of Figure~\ref{figure:UB4}. In all of these cases we have that $Q(P)=Q(P')\leq-2$, and thus Theorem~\ref{lma:maxdim2} holds.
\end{proof}
    \begin{figure}[hbt]
        \centering
        \includegraphics[page = 3, width=0.9\linewidth]{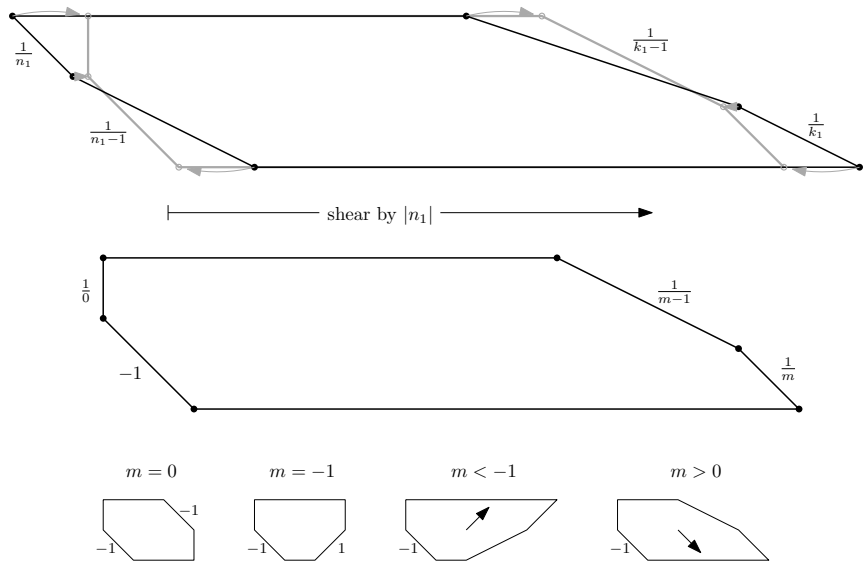}
        \caption{$P$ is a hexagon.}
        \label{figure:UB4}
        
    \end{figure}
    
\begin{proof}[Proof of Theorem~\ref{lma:maxdim2}]This follows from Lemmas~\ref{lma:0134cut} and~\ref{2cuts}.
\end{proof}

\subsection{Proving Theorem ~\ref{lma:maxdim2_restate}} What remains is to account for the case when $\lw(P)\neq\egon(P)$:
\begin{lemma}\label{lma:egonnotlw}
Theorem~\ref{lma:maxdim2_restate} holds when $\lw(P)\neq\egon(P)$.
\end{lemma}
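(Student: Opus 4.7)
The approach is to invoke Proposition~\ref{lwisegon}, which classifies polygons with $\lw(P)\neq\egon(P)$ as either $\tilde d\Sigma$ for some integer $\tilde d\geq 2$ or $2\Upsilon$, and then to verify the target bound for each family by a direct computation using Lemma~\ref{lma:column_vector_formula}.

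The small instances dispose of themselves. The case $P\cong 2\Sigma$ is excluded by hypothesis, since $\egon(2\Sigma)=1$. The case $P\cong 3\Sigma$ has $g=1$ and $d=\egon(P)=2$; every genus-one metric graph is a single cycle, so $\dim(\mathbb{M}_P)\leq 1$, comfortably below the target value $4$. For $P\cong 2\Upsilon$, we have $g=4$, $r(P)=6$, and $d=3$, so the crude consequence $\dim(\mathbb{M}_P)\leq g+r-3=7$ of Lemma~\ref{lma:column_vector_formula} already beats the target value $11$.

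The substantive case is $P\cong \tilde d\Sigma$ with $\tilde d\geq 4$, which is maximal and non-hyperelliptic. Setting $d:=\egon(P)=\tilde d-1$, I would record $g=\binom{d}{2}$ and $r=3(d+1)$, and then exhibit three column vectors of $\tilde d\Sigma$: the vectors $(1,0)$ and $(0,1)$ both work with respect to the hypotenuse, while $(1,-1)$ works with respect to the bottom edge. Each of these three claims is a one-line verification from the definition of a column vector. Combining $c(P)\geq 3$ with Lemma~\ref{lma:column_vector_formula} yields $\dim(\mathbb{M}_P)\leq g+r-6$, and a short algebraic simplification shows that this quantity coincides on the nose with the target $g+\tfrac{2g}{d-1}+2d-3$.

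The only step with any content is identifying those three column vectors of $\tilde d\Sigma$, but once written down they are immediate from the definition, so I do not foresee any real obstacle.
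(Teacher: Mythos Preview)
Your proposal is correct and follows essentially the same route as the paper: invoke Proposition~\ref{lwisegon} to reduce to the families $\tilde d\Sigma$ and $2\Upsilon$, then apply Lemma~\ref{lma:column_vector_formula} together with $c(\tilde d\Sigma)\geq 3$ and a direct algebraic check that the resulting bound matches $g+\tfrac{2g}{d-1}+2d-3$. You are in fact slightly more careful than the paper, which applies Lemma~\ref{lma:column_vector_formula} uniformly without separating out the hyperelliptic case $3\Sigma$; your explicit identification of the column vectors $(1,0)$, $(0,1)$, $(1,-1)$ is also a small expository plus, though the paper simply asserts $c(P)\geq 3$.
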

\begin{proof}
    Assuming $\lw(P)\neq\egon(P)=d>1$, then by Proposition~\ref{lwisegon}, either $P\cong (d+1)\Sigma$ or $d=3$ and $P\cong2\Upsilon$. In both situations, we compute dimension directly, via Lemma~\ref{lma:column_vector_formula}:
    
    \begin{itemize} \item \underline{Case 1: $P\cong(d+1)\Sigma$.} Then $g(P)=d(d-1)/2$, $r(P)=3(d+1)$, and $c(P)\geq3$. By Lemma~\ref{lma:column_vector_formula},
    \begin{align*}
        \dim\left(\mathbb{M}_{(d+1)\Sigma}\right)&=g(P)+r(P)-3-c(P)\\
        &\leq g(P)+3d-3\\
&= g(P)+\frac{d(d-1)}{d-1}+2d-3\\
&= g(P)+\frac{2g(P)}{d-1}+2d-3.
    \end{align*}
    
\item \underline{Case 2: $d=3$ and $P\cong2\Upsilon$.} Then $g(P)=4$, $r(P)=6$, and $c(P)=0$. By Lemma~\ref{lma:column_vector_formula},
\begin{align*}\dim\left(\mathbb{M}_{P}\right)&=g(P)+r(P)-3-c(P)\\
&=7\\
&\leq g(P)+\frac{2g(P)}{d-1}+2d-3\\
&=11.\end{align*}
\end{itemize}
\end{proof}

\begin{proof}[Proof of Theorem~\ref{lma:maxdim2_restate}] This follows from Theorem~\ref{lma:maxdim2} and Lemma~\ref{lma:egonnotlw}.
\end{proof}

\section{Equality of $\dmndexp{g}{d}$ and $\dmnd{g}{d}$ for large genus}\label{sec:equal-dim}

Building on the previous section, we now prove Proposition~\ref{prop:equaldimless}—that for a fixed $d\geq 3$ and a sufficiently large genus $g\geq \max\{d^3,32\}$, the dimension of $\mndexp{g}{d}$ upper-bounds the dimension of $\mnd{g}{d}$. Combining this result with that of Proposition~\ref{prop:equaldimgreater}, we obtain the equality \[\dmnd{g}{d}=\dmndexp{g}{d}.\]

To gain intuition behind the proof of Proposition~\ref{prop:equaldimless}, suppose $\Gamma\in \mnd{g}{d}$ for some genus $g\geq 3$ and gonality $d\geq 3$. Further suppose that $\Gamma$ arises from the polygon $P$ of expected gonality $\egon(P)=d'$. Lemma~\ref{lma:gon_at_most_egon_graphs} gives $d\leq d'$, and thus
\[\mnd{g}{d}\mathrel{\subset}\!\bigcup_{d'\geq d}\mndexp{g}{d'}.\]

Furthermore, we claim that the above union is finite by establishing Lemma~\ref{lma:maxlatticewidth}, which states that the lattice width (and thus expected gonality) of a maximal polygon of genus $g$ cannot exceed $2\sqrt{g+2}$. This provides an upper bound on $\dmndc{g}{d}$. Explicitly, writing $\left\llbracket d,2\sqrt{g+2}\right\rrbracket$ for $\Z\cap\left[ d,2\sqrt{g+2}\right],$
\begin{equation}\label{eq: reducedProp}
\dmnd{g}{d}\leq \dim\left(\bigcup_{d'\in\left\llbracket d,2\sqrt{g+2}\right\rrbracket}\!\!\!\!\!\!\!\!\!\!\mndexp{g}{d'}\right)\leq\max_{d'\in\left\llbracket d,2\sqrt{g+2}\right\rrbracket}\left\{\dmndexp{g}{d'}\right\}.
\end{equation}
Thus with Lemma~\ref{lma:maxlatticewidth}, Proposition~\ref{prop:equaldimless} reduces to proving that for a fixed expected gonality $d\geq 3$ and genus $g\geq \max\{d^3,32\}$, \begin{equation}\label{eq: goal} \dmndexp{g}{d'}\leq\dmndexp{g}{d} \text{ for all } d'\in\left\llbracket d,2\sqrt{g+2}\right\rrbracket. \end{equation}

\begin{lemma}\label{lma:maxlatticewidth} Let $P$ be a maximal lattice polygon of genus $g$ and lattice width $d$. Then
    \[d \leq 2\sqrt{g + 2}.\]
\end{lemma}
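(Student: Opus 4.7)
The plan is to proceed by case analysis using Proposition~\ref{lwisegon}, which classifies the relationship between $\lw(P)$ and $\lw(P^{(1)})$.

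First, the easy cases. If $d \le 2$, the bound $d \le 2\sqrt{g+2}$ is automatic for any $g \ge -1$. If $P$ is hyperelliptic (i.e., $\dim(P^{(1)}) \le 1$), then by the classification of hyperelliptic lattice polygons $\lw(P) \le 2$, so again $d \le 2$. Henceforth assume $P$ is non-hyperelliptic and $d \ge 3$.

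For the non-hyperelliptic case, three sub-cases arise via Proposition~\ref{lwisegon}. If $P \cong d\Sigma$, then $g = \binom{d-1}{2} = (d-1)(d-2)/2$, and the inequality $d^2 \le 4g + 8$ rearranges to $(d-3)^2 + 3 \ge 0$, which always holds. If $P \cong 2\Upsilon$, a direct computation gives $d = 3$, $g = 4$, and $9 \le 24$. In the remaining sub-case, Proposition~\ref{lwisegon} gives $\lw(P) = \egon(P) = \lw(P^{(1)}) + 2$, so $P^{(1)}$ is $2$-dimensional with lattice width $w := d - 2$, and since $g = |P^{(1)} \cap \Z^2|$, the lemma reduces to the sublemma that any $2$-dimensional lattice polygon of lattice width $w$ has at least $\frac{(w+2)^2 - 8}{4} = \frac{w^2 + 4w - 4}{4}$ lattice points.

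To prove this sublemma, I would apply Pick's theorem to $P^{(1)}$, bounding $r(P^{(1)})$ via Scott's inequality $r \le 2g(P^{(1)}) + 7$ when $P^{(1)}$ itself has positive genus (and otherwise handling the case of $P^{(1)}$ containing only its boundary lattice points), then pair this with a lower bound on $A(P^{(1)})$ coming from the lattice-width constraint. The needed area bound has the form $A(P^{(1)}) \ge f(w)$ with $f(w)$ growing like $w^2/2$; this in turn requires exploiting that $P^{(1)}$ has width $\ge w$ in every primitive integer direction, not only in the two coordinate axes.

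The main obstacle is this area lower bound. Naïve bounds that use only the horizontal and vertical extents of $P^{(1)}$ do not suffice: examples such as $\conv\{(0,0), (w, a), (b, w)\}$ with $a + b = w$ achieve area as small as $(w^2 - \lfloor w/2 \rfloor^2)/2$, so the argument must exploit the width constraints in several non-axis lattice directions simultaneously, likely via a case analysis on the shape of $P^{(1)}$ near its vertices.
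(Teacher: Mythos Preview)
Your reduction to the sublemma is correct: in the generic non-hyperelliptic case you need that a two-dimensional lattice polygon $Q$ of lattice width $w$ has at least $\tfrac{w^2+4w-4}{4}$ lattice points. But the proof is incomplete, and you have honestly identified the gap: you do not have the area lower bound $A(Q)\ge c\,w^2$ needed to finish. The closing remarks about ``exploiting width constraints in several non-axis directions'' and ``case analysis on the shape of $P^{(1)}$ near its vertices'' are not a proof sketch but a description of the difficulty.

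The paper closes exactly this gap by invoking a result of Soprunova \cite[Theorem~12]{Sop23}: for any lattice polygon $P$, one has $\tfrac{3}{8}\ls(P)\lw(P)\le A(P)$, and since $\lw(P)\le\ls(P)$ this gives $A(P)\ge\tfrac{3}{8}d^2$. The paper applies this bound directly to $P$ (rather than to $P^{(1)}$ as you propose), combines it with Pick's theorem, and controls $r(P)$ via Castryck's inequality $r(P)\le r(P^{(1)})+8\le g+8$ from \cite{moving_out}. This yields $\tfrac{3}{8}d^2\le\tfrac{3}{2}g+3$, i.e.\ $d\le 2\sqrt{g+2}$, with $d\Sigma$ handled separately just as you do.

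Your route through $P^{(1)}$ would also work once you have Soprunova's bound: applying it to $Q=P^{(1)}$ gives $A(Q)\ge\tfrac{3}{8}w^2$, and then $|Q\cap\Z^2|=A(Q)+\tfrac{r(Q)}{2}+1\ge\tfrac{3}{8}w^2+\tfrac{5}{2}$, which one checks exceeds $\tfrac{w^2+4w-4}{4}$ for all $w\ge 1$. So the two approaches are close cousins; the paper's version is slightly more direct in that it avoids passing to the interior polygon and the attendant case split on $\egon$ versus $\lw$. Either way, the Soprunova-type area bound is the essential input, and without it (or an equivalent) your argument does not close.
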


\begin{proof}
    Let $P$ be a polygon. By \cite[Theorem 12]{Sop23}, $P$, we have the inequality \begin{equation}\label{eq: latticesize} \frac{3}{8}\ls\left(P\right)\lw\left(P\right) \le A\left(P\right).\end{equation} Here, $\ls\left(P\right)$ is the lattice size of $P$—as defined in Section~\ref{sec:background}.  By Pick's theorem, and the fact that $d = \lw\left(P\right) \le \ls\left(P\right)$, (\ref{eq: latticesize}) expands to
    \begin{equation}\label{eq: picksandlatticesize}\frac{3}{8}d^2\leq \frac{3}{8}\ls\left(P\right)\lw\left(P\right)\leq A(P)=g+\frac{r}{2}-1.\end{equation}
In \cite{moving_out}, the author shows that for polygons not congruent to $d\Sigma$,  
    \[r \leq r^{\left(1\right)}+8,\] 
    where $r^{(1)}$ is the number of boundary points of the interior polygon. Certainly $r^{(1)}\leq g$, and so we obtain the inequalities \begin{equation}\label{eq: rbound} g + \frac{r}{2} - 1 \leq g+\frac{r^{(1)}+8}{2}-1\leq g+\frac{g+8}{2}-1\leq\frac{3}{2}g + 3.\end{equation} Combining (\ref{eq: picksandlatticesize}) and (\ref{eq: rbound}), we find that $$\frac{3}{8}d^2\leq\frac{3}{2}g+3,$$ which occurs precisely when $$d \le 2\sqrt{g+2}.$$
    On the other hand, if $P\cong d\Sigma$, then $g(P)=(d-1)(d-2)/2$, and
    \begin{align*}
d \le 2\sqrt{\frac{(d-1)(d-2)}{2}+2}
&\iff d^2 \le 4\!\left(\frac{(d-1)(d-2)}{2}+2\right)\\
&\iff d^2 \le 2\bigl(d^2-3d+2\bigr)+8\\
&\iff 0 \le d^2-6d+12\\
&\iff 0 \le (d-3)^2+3,
\end{align*}
which is true.
\end{proof}

\subsection{The upper bound function} Define the function $U:\R_{>0}\times\R_{>1}\to\R$ by $$\left(g,d\right)\mapsto g+\frac{2g}{d-1} + 2d-3.$$ Then Theorem~\ref{lma:maxdim2_restate} implies that for all integers $g>0,d> 1$,
\begin{equation}\label{eq: bone}
\dmndexp{g}{d} \leq {\bone{g}{d}}.
\end{equation}

The following lemmas will organize the properties of $U$ that will be referenced throughout the proof of Proposition~\ref{prop:equaldimless}.

\begin{lemma}\label{lma:pospartial}

The second partial derivative of $U$ with respect to $d$ is positive on $\R_{>0} \times \R_{> 1}$. Specifically,
\[
\frac{\partial^2 U}{\partial d^2}\left(g,d\right)  = \frac{4g}{\left(d - 1\right)^3} > 0.
\]

\end{lemma}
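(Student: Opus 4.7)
The plan is a direct computation. Fix $g > 0$ and regard $U(g, \cdot)$ as a smooth one-variable function of $d$ on the open interval $(1, \infty)$. Rewriting the relevant term as $2g(d-1)^{-1}$ and noting that the remaining terms $g$, $2d$, and $-3$ contribute nothing to the second derivative, two applications of the power rule give
\[
\frac{\partial U}{\partial d}(g, d) = -2g(d-1)^{-2} + 2
\quad \text{and} \quad
\frac{\partial^2 U}{\partial d^2}(g, d) = 4g(d-1)^{-3},
\]
which matches the claimed formula.

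For the positivity statement on $\R_{>0} \times \R_{>1}$, the numerator $4g$ is strictly positive since $g > 0$, and the denominator $(d-1)^3$ is strictly positive since $d > 1$. Hence the quotient is strictly positive, as required.

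There is no substantive obstacle; the only thing to track is the domain, which is chosen precisely so that $(d-1)^3$ is both nonzero (making the derivative well defined) and positive (giving the strict sign). The point of recording this lemma is presumably to conclude that $U(g, \cdot)$ is strictly convex in $d$ on this domain, which is the key analytic input for comparing values of $\bone{g}{d'}$ across the finite range $d' \in \llbracket d, 2\sqrt{g+2}\rrbracket$ appearing in the reduction~(\ref{eq: goal}).
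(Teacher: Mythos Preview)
Your proof is correct and is exactly the direct computation the paper has in mind; indeed, the paper states this lemma without proof, treating the calculation as routine. Your added remark about the role of the lemma (strict convexity of $U(g,\cdot)$ feeding into the comparison over $d'\in\llbracket d,2\sqrt{g+2}\rrbracket$) is also accurate.
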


\begin{lemma}\label{lma:b1bound2}
Fix a lattice width \(d\ge 3\). Then for $g \ge \max\{d^3,\,32\}$,
\[
\bone{g}{2\sqrt{g+2}} \;\le\; \bone{g}{d+1}.
\]
\end{lemma}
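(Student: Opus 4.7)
The plan is to directly compute and factor the difference $U(g,d+1) - U(g,D)$, where $D := 2\sqrt{g+2}$, reducing the inequality to verifying a quadratic condition in $g$. Expanding the definition of $U$ and combining the $2g/(\cdot)$ terms over a common denominator, the difference simplifies to
\[
U(g, d+1) - U(g, D) \;=\; 2(D - d - 1)\left[\frac{g}{d(D-1)} - 1\right].
\]

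First I would confirm the first factor is positive: for $d \geq 3$ and $g \geq d^3$, one has $4(g+2) \geq 4(d^3+2) > (d+1)^2$, so $D > d+1$. The desired inequality $U(g,D) \leq U(g,d+1)$ therefore reduces to showing the bracketed factor is nonnegative, i.e.\ $g \geq d(D-1)$, or equivalently $g + d \geq 2d\sqrt{g+2}$. Both sides are positive, so squaring preserves the inequality and rearranging yields the sufficient quadratic condition
\[
g^2 - (4d^2 - 2d)\,g - 7d^2 \;\geq\; 0.
\]

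The main step is verifying this quadratic inequality for $g \geq \max\{d^3, 32\}$. The quadratic in $g$ is strictly increasing past its vertex at $g = 2d^2 - d$, and this threshold lies below both $32$ (when $d=3$) and $d^3$ (when $d \geq 4$, since $d^3 - 2d^2 + d = d(d-1)^2 \geq 0$); so it suffices to check the inequality at the endpoint $g = \max\{d^3, 32\}$. When $d = 3$, the endpoint is $g = 32$, at which the expression evaluates to $1024 - 960 - 63 = 1 \geq 0$. When $d \geq 4$, substituting $g = d^3$ gives $d^2(d^4 - 4d^3 + 2d^2 - 7)$, and the inner polynomial equals $25$ at $d = 4$ and is increasing for $d \geq 4$ (its derivative $4d^3 - 12d^2 + 4d$ is positive there), so is always positive.

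The only real obstacle is the case $d = 3$, where the threshold $g = 32$ is tight and forces the hypothesis to take the form $g \geq \max\{d^3, 32\}$ rather than simply $g \geq d^3$; indeed, $d^3 = 27$ alone would be insufficient here, since plugging $g = 27$ into the quadratic gives $729 - 810 - 63 = -144 < 0$. For $d \geq 4$, by contrast, the bound $g \geq d^3$ already has considerable slack.
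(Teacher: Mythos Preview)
Your proof is correct and follows essentially the same approach as the paper: both factor the difference as $2(D-d-1)\bigl[g/(d(D-1))-1\bigr]$, reduce to the inequality $g\ge d(D-1)$, split into the cases $d=3$ and $d\ge 4$, and arrive at the identical polynomial check $d^4-4d^3+2d^2-7\ge 0$ with value $25$ at $d=4$. The only cosmetic difference is that the paper establishes monotonicity in $g$ via $F'(g)=1-d/\sqrt{g+2}>0$, whereas you square first and use monotonicity of the resulting quadratic past its vertex; both routes land on the same endpoint verification.
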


\begin{proof} A direct subtraction gives
\[
\begin{aligned}
\Delta
&:=\bone{g}{d+1}-\bone{g}{2\sqrt{g+2}} \\
&=2g\!\left(\frac{1}{d}-\frac{1}{\,2\sqrt{g+2}-1\,}\right)
+2\!\left(d-(2\sqrt{g+2}-1)\right)\\
&=2\bigl((2\sqrt{g+2}-1)-d\bigr)
\left(\frac{g}{d(2\sqrt{g+2}-1)}-1\right).
\end{aligned}
\]
Since $g\ge d^3$ and $d\ge 3$, we have $2\sqrt{g+2}-1-d>0$. Hence $\Delta\ge 0$ is equivalent to
\begin{equation}\label{eq:star}
\frac{g}{\,2\sqrt{g+2}-1\,}\;\ge\; d
\quad\Longleftrightarrow\quad
g \;\ge\; d\bigl(2\sqrt{g+2}-1\bigr).
\end{equation}
Define \(F(g):=g-d(2\sqrt{g+2}-1)\). Then
\[
F'(g)\;=\;1-\frac{d}{\sqrt{g+2}}
\;\ge\;1-\frac{d}{\sqrt{d^3+2}}
\;\ge\;1-\frac{1}{\sqrt d}\;>\;0
\qquad(d\ge 3),
\]
so $F$ is increasing on $[d^3,\infty)$. Thus it suffices to check $F(g)\ge 0$ at the left endpoint.

\smallskip
\begin{itemize}
\item \underline{Case 1: $d\ge 4$.}
At $g=d^3$, condition (\ref{eq:star}) becomes
\[
d^3 \;\ge\; d\bigl(2\sqrt{d^3+2}-1\bigr)
\;\Longleftrightarrow\;
(d^2+1)^2 \;\ge\;4(d^3+2),
\]
i.e.
\[
P(d):=d^4-4d^3+2d^2-7\;\ge\;0.
\]
For $d\ge 4$, $P'(d)=4d(d^2-3d+1)>0$, and $P(4)=25>0$, so $P(d)\ge 0$. Hence (\ref{eq:star}) holds for all $g\ge d^3$.

\item \underline{Case 2: $d=3$.}
Condition (\ref{eq:star}) is
\[
g \;\ge\; 3\bigl(2\sqrt{g+2}-1\bigr)
\;\Longleftrightarrow\;
\sqrt{g+2}\;\ge\;3+\sqrt{8}.
\]
This is equivalent to $g\ge (3+\sqrt{8})^2-2=15+6\sqrt{8}\approx 31.97$, so any integer $g\ge 32$ works.
\end{itemize}

Combining the two cases gives the claim.
\end{proof}

\begin{lemma}\label{lma:b1bound1}
    Fix $d\geq 3$. Then for any genus $g \geq d^3$,
    \[\bone{g}{d+1}\leq\dmndexp{g}{d}.\]
\end{lemma}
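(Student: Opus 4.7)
The plan is to exhibit, for each $g \geq d^3$ with $d \geq 3$, a maximal lattice polygon $P$ of genus $g$, expected gonality $d$, and $\dim(\mathbb{M}_P) \geq \bone{g}{d+1}$. The key quantitative input is the generous slack
\[
\bone{g}{d} - \bone{g}{d+1} \;=\; \frac{2g}{d(d-1)} - 2 \;\geq\; 2d,
\]
valid for every $g \geq d^3$ and $d \geq 3$ (by substituting $g \geq d^3$ and simplifying). In view of Theorem~\ref{lma:maxdim2_restate}, it therefore suffices to produce $P$ whose dimension falls short of the upper bound $\bone{g}{d}$ by at most $2d$.

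Write $g = (a-1)(d-1) - s$ with $a = \lceil g/(d-1)\rceil + 1$ and $0 \leq s \leq d-2$. Since $g \geq d^3$ forces $a > d$, the rectangle $R_a = [0,a] \times [0,d]$ is a maximal non-hyperelliptic polygon with $\egon(R_a) = d$ (its interior is an $(a-2) \times (d-2)$ sub-rectangle of vertical lattice width $d-2$). Applying Lemma~\ref{lma:column_vector_formula} with $r(R_a) = 2(a+d)$ and $c(R_a) = 4$, one obtains $\dim(\mathbb{M}_{R_a}) = (a-1)(d-1) + 2a + 2d - 7$; when $s = 0$ this equals $\bone{g}{d} - 2$, already well within the slack.

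For $s \geq 1$, I would take $P$ to be a pentagon (or, in boundary cases, a hexagon) obtained from $R_a$ by slicing off one or two triangular corners, chosen so the total reduction of interior lattice points equals $s$. By Pick's theorem applied to a top-right corner cut along the triangle with vertices $(a,d-q), (a,d), (a-p,d)$, the genus drops by exactly $\frac{(p-1)(q-1) + \gcd(p,q) - 1}{2}$. Taking $(p,q) = (2, 2s)$ realizes any target $s$ with $2s \leq d$; for larger $s$, one combines a top-right cut with a second cut at a different corner (say top-left), summing the two reductions. For each such $P$ one then checks (i) maximality via $(P^{(1)})^{(-1)} = P$, (ii) $\egon(P) = d$ because $P^{(1)}$ retains vertical lattice width $d - 2$, and (iii) via direct calculation of $r(P)$ and $c(P)$ that $\dim(\mathbb{M}_P)$ falls short of $\bone{g}{d}$ by at most $\frac{2s(d-2)}{d-1} \leq 2d$, safely within the slack.

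The main obstacle is the uniform handling of all residues $s \in \{0,\ldots,d-2\}$: one must systematically verify that the reduction formula realizes every $s$ in range, using a single cut when possible and two cuts otherwise (the minimal example being $d=7, s=5$, where no single cut with $p,q \leq d$ achieves reduction $5$), and confirm that each resulting polygon passes the maximality and $\egon$ tests. These verifications are casework-heavy but essentially routine—thanks to the ample $2d$ slack between $\bone{g}{d}$ and $\bone{g}{d+1}$, no particularly clever construction is required, only one whose dimension can be bounded below by $\bone{g}{d} - O(d)$.
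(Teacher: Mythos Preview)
Your overall strategy matches the paper's exactly: build a polygon $P$ that is an $m\times d$ rectangle with corners trimmed so that $g(P)=g$ and $\egon(P)=d$, bound the defect $U(g,d)-\dim(\mathbb{M}_P)$, and compare it to the slack $U(g,d)-U(g,d+1)$.  The difference is in how the corner cuts are chosen.  Where you use cuts of the form $(2,2s)$ for $s\le d/2$ and then split into two cuts for larger~$s$, the paper writes the genus deficit $k$ as a sum of four triangular numbers $T_{x_1-1}+\cdots+T_{x_4-1}$ (Gauss) and removes an \emph{isoceles} right triangle of leg $x_i$ at each corner.  Each such cut drops the genus by exactly $T_{x_i-1}$, so every $k\in\{0,\dots,d-2\}$ is handled in one stroke with no casework, and the resulting polygon is automatically maximal because the hypotenuse has primitive direction $(1,-1)$.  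The paper then shows $U(g,d)-\dim(\mathbb{M}_P)\le d+4$ and checks $U(g,d)-U(g,d+1)\ge d+4$; your slack bound of $2d$ is slightly weaker but, as you note, still ample.

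What remains genuinely open in your write-up is the two-cut case and the maximality check.  These are not entirely routine: maximality fails for many non-isoceles corner cuts (for instance, the cut $(11,2)$ at $d=7$ produces a polygon with $P\subsetneq (P^{(1)})^{(-1)}$), so one must actually verify it for the specific family $(2,2s)$ and for the combined two-cut hexagon, and then recompute $r(P)$ and $c(P)$ in the hexagon case.  All of this does go through, but it is precisely the bookkeeping that the paper's triangular-number trick is designed to avoid.
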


\begin{proof}

We first show that for all $d\geq3$ and genera $g\geq d^3$, there is a polygon $P$ with $g(P)=g$ and $\egon(P)=\lw(P)=d$ such that $\bone{g}{d}-\dim\left(\modspace{P}\right)\leq d+4$.

Our polygon $P$ will be a truncated rectangle, so $P = P_{\text{trunc}}$. As $g\geq d^3$, we may let $m\in\Z_+$ be such that $g=\left(m-1\right)\cdot\left(d-1\right)-k$, where $0\leq k<d-1$. As a result of Gauss, we may write $k$ as the sum of four triangular numbers $T_{x_1-1},\dots,T_{x_4-1}$, where each $x_i\geq 1$. Consider $R$, the $m\times d$ rectangle, with labeled corners $c_1,c_2,c_3,c_4,$. Take $P$ to be the maximal polygon obtained from $R$ by cutting an isosceles triangle of leg-length $x_i$ from corner $c_i$ whenever $x_i> 1$. As desired, $P$ has genus $(m-1)\cdot(d-1)-(T_{x_1-1}+\cdots+T_{x_4-1})=(m-1)\cdot(d-1)-k=g$, since the cuts do not overlap: for $i\neq j$, $(x_i-1)+(x_j-1)\leq T_{x_i-1}+T_{x_j-1}\leq k\leq d-2$, and thus $x_i+x_j\leq d$. Additionally, $P$ has lattice width $\min\{m,d\}= d$ since $g\geq d^3$. We now show that $\bone{g}{d}-\dim\left(\modspace{P}\right)\leq d+4$.

By Lemma~\ref{lma:column_vector_formula} and (\ref{eq: boundr}),
\[\dmodspace{P}=g+\frac{2g+\sum_{i=1}^nX_i}{d-1}+2+2d-c\left(P\right)-3,\]
where the $X_i$'s are defined as in (\ref{eq: xi}). Subtracting from $U(g,d)$ yields
\begin{equation}\label{eq: comparing1}\bone{g}{d}-\dmodspace{P}=-\frac{\sum_{i=1}^nX_i}{d-1}-2+c\left(P\right).\end{equation}
Since $P$ is a truncated rectangle, solely with short cuts, we have that $c\left(P\right)\leq 4$. Furthermore, since $\sum_{i=1}^nX_i$ simplifies to $\sum_{i=1}^nx_i^2-x_id$, (\ref{eq: comparing1}) reduces to
\begin{equation}\label{eq: comparing2} \bone{g}{d}-\dmodspace{P} \leq -\frac{\sum_{i=1}^nx_i^2-x_id}{d-1}+2.\end{equation}
Additionally, (\ref{eq: comparing2}) is maximized when $n=4$ and each $x_i=d/2$, thus
\begin{align}
\bone{g}{d}-\dmodspace{P}&\leq -\frac{1}{d-1}\left(\sum_{i=1}^4 d/2 \left(d - d/2\right)\right) + 2 \label{eq:gap}\\ \nonumber&\leq \frac{d^2}{d-1} + 2 \\
\nonumber &\leq \frac{d^2 - 2d + 1}{d-1} + \frac{2d - 1}{d-1} + 2 \\ 
\nonumber &\leq d + 3 + \frac{1}{d-1} \\ \nonumber &\leq d + 4.
\end{align}

Finally, we prove that for $g\geq d^3$, \begin{equation}\label{eq: gap}\bone{g}{d+1}\leq \bone{g}{d}-(d+4).\end{equation} We will show the gap is non-negative.
\begin{align*}
(\bone{g}{d} - \bone{g}{d+1}) - (d+4) &= 2g\left(\frac{1}{d-1} - \frac{1}{d} \right)  - d- 6 \\
&\geq \frac{2d^2}{d-1}-d - 6.
\end{align*}
The right-hand side is non-negative for all positive integers $d \geq 3$, as observed via multiplication by $(d-1)$:
\[(d-1) \left( \frac{2d^2}{d-1}-d - 6 \right) = (d-2)(d-3).\]
This gives (\ref{eq: gap}), which in conjunction with (\ref{eq:gap}), proves that for $g\geq d^3$,
\[\bone{g}{d+1}\leq\bone{g}{d}-\left(d+4\right)\leq \dmodspace{P}\leq\dmndexp{g}{d}.\]
\end{proof}

\subsection{Proving Theorem~\ref{tmd:equaldim}.}
\begin{proposition}
    \label{prop:equaldimless}
    Fix $d\geq 3$. Then for all genera $g 
\geq \max\{d^3,32\}$, 
    \[\dim\Bigl(\mnd{g}{d}\Bigr) \leq \dmndexp{g}{d}.\]
\end{proposition}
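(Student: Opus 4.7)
The plan is to carry out precisely the strategy sketched in the two paragraphs preceding the statement of the proposition; essentially all of the work is already done in Lemmas~\ref{lma:maxlatticewidth}, \ref{lma:pospartial}, \ref{lma:b1bound2}, and~\ref{lma:b1bound1}, together with Theorem~\ref{lma:maxdim2_restate}, so my job is to assemble them.

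First I would verify the set-theoretic inclusion
\[\mnd{g}{d}\subset\bigcup_{d'\in\llbracket d,2\sqrt{g+2}\rrbracket}\mndexp{g}{d'}.\]
The lower bound $d'\geq d$ comes from Lemma~\ref{lma:gon_at_most_egon_graphs}: any $\Gamma\in\mnd{g}{d}$ arises from a polygon whose expected gonality is at least $\gon(\Gamma)=d$. The upper bound $d'\leq 2\sqrt{g+2}$ comes from Lemma~\ref{lma:maxlatticewidth}, once one remarks that $\egon(P)\leq\lw(P)$---a consequence of Proposition~\ref{lwisegon}, since in the exceptional cases one even has $\egon(P)=\lw(P)-1$. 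This inclusion delivers
\[\dim\bigl(\mnd{g}{d}\bigr)\;\leq\;\max_{d'\in\llbracket d,2\sqrt{g+2}\rrbracket}\dim\bigl(\mndexp{g}{d'}\bigr),\]
reducing the proposition to showing $\dim(\mndexp{g}{d'})\leq\dim(\mndexp{g}{d})$ for each such $d'$.

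The case $d'=d$ is trivial, so I would concentrate on $d'\in\llbracket d+1,2\sqrt{g+2}\rrbracket$, where I would chain together four inequalities. Theorem~\ref{lma:maxdim2_restate} gives $\dim(\mndexp{g}{d'})\leq\bone{g}{d'}$. Lemma~\ref{lma:pospartial} makes $\bone{g}{\cdot}$ strictly convex in its second argument, so its maximum over the real interval $[d+1,2\sqrt{g+2}]$ is attained at an endpoint, giving $\bone{g}{d'}\leq\max\{\bone{g}{d+1},\bone{g}{2\sqrt{g+2}}\}$. Lemma~\ref{lma:b1bound2} collapses the right endpoint to the left, $\bone{g}{2\sqrt{g+2}}\leq\bone{g}{d+1}$, and Lemma~\ref{lma:b1bound1} then descends from this upper bound back to an actual dimension, $\bone{g}{d+1}\leq\dim(\mndexp{g}{d})$. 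Concatenating these four inequalities closes the argument.

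The main obstacle is really just careful bookkeeping: tracking where the hypotheses $g\geq d^3$ and $g\geq 32$ get spent (they are invoked in Lemmas~\ref{lma:b1bound1} and~\ref{lma:b1bound2}, respectively), remembering to pass from $\lw(P)$ to $\egon(P)$ via Proposition~\ref{lwisegon} when shrinking the union to a finite one, and noting that the convexity maximum-principle for $\bone{g}{\cdot}$ on the real interval a fortiori bounds its values at the integer points in $\llbracket d+1,2\sqrt{g+2}\rrbracket$. No new combinatorial or geometric input is required beyond what has already been developed in Sections~\ref{sec:lower} and~\ref{sec:upper}.
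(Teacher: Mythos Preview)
Your proposal is correct and follows essentially the same approach as the paper's proof: both reduce to showing $\dim(\mndexp{g}{d'})\leq\dim(\mndexp{g}{d})$ for $d'\in\llbracket d,2\sqrt{g+2}\rrbracket$ via the chain $\dim(\mndexp{g}{d'})\leq U(g,d')\leq U(g,d+1)\leq\dim(\mndexp{g}{d})$, invoking convexity (Lemma~\ref{lma:pospartial}) together with Lemmas~\ref{lma:b1bound2} and~\ref{lma:b1bound1}. Your separation of the trivial case $d'=d$ from $d'\geq d+1$ is in fact slightly cleaner than the paper, which states the convexity bound over the real interval $[d,2\sqrt{g+2}]$ even though $U(g,d')>U(g,d+1)$ can occur for $d'\in[d,d+1)$.
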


\begin{proof}

We prove (\ref{eq: goal})—that  $\dmndexpc{g}{d'}\leq\dmndexpc{g}{d}$ for all $d'\in\left\llbracket d,2\sqrt{2+g}\right\rrbracket$. Along with Lemma~\ref{lma:pospartial}, which states that the second partial derivative of $\bone{g}{d}$ with respect to $d$ is positive, Lemma~\ref{lma:b1bound2} implies that \begin{equation}\bone{g}{d'}\leq \bone{g}{d+1}\text{ for all } d'\in\left[d,2\sqrt{g+2}\right]\supset\left\llbracket d,2\sqrt{g+2}\right\rrbracket.
\label{eq: beatsall}
\end{equation}
Prepending (\ref{eq: bone}) to (\ref{eq: beatsall}), and then appending Lemma~\ref{lma:b1bound1}, we conclude that
\[\dmndexp{g}{d'}\leq\dmndexp{g}{d} \text{ for all } d'\in\left\llbracket d,2\sqrt{g+2}\right\rrbracket.\]
For visual intuition, see Figure~\ref{fig:graphU}.
\end{proof}
\begin{figure}[hbt]
        \centering
    \includegraphics[width=\linewidth]{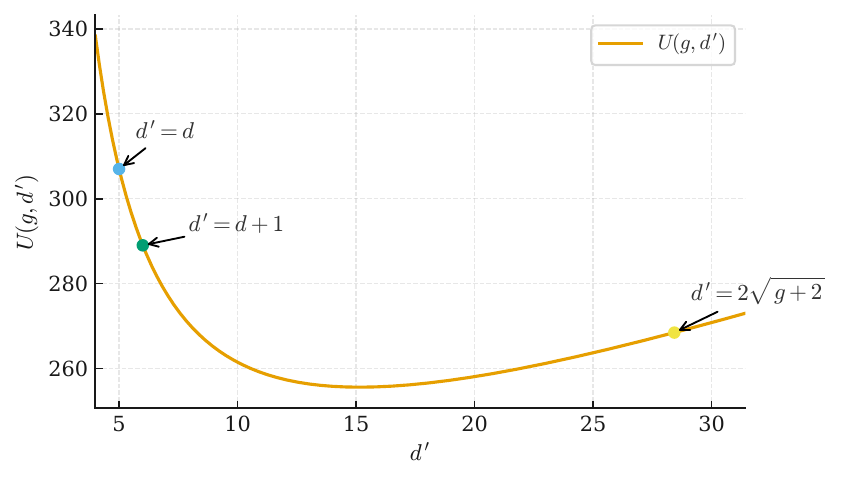}
    \vspace{-30pt}
        \caption{The graph of $U(g,d')$ with $g=200$ fixed and $d=5$.}  \label{fig:graphU}
    \end{figure}

\begin{proof}[Proof of Theorem~\ref{tmd:equaldim}.]
    The result follows from Propositions $\ref{prop:equaldimgreater}$ and $\ref{prop:equaldimless}$.
\end{proof}

\medskip
\noindent
\subsection{Future directions.} 
A natural next step is to relax the high-genus assumption in Theorem~\ref{tmd:equaldim} and prove Conjecture~\ref{conj:equalmodspace} for all genera $g$ and gonalities $d$. Such a proof, if combinatorial, would likely require an invariant that provide a tighter bounds on gonality than the scramble number, as the authors were unable to fully solve the $g = 7$ case of Theorem~\ref{tmd:equaldim} via scramble number. Ultimately, this line of study aims to resolve Conjecture~\ref{conj:gon=egon}.

A related question is to study the geometric structure of the gonality locus $\mnd{g}{d}$. While the expected gonality locus $\mndexp{g}{d}$ is a union of polyhedral cones, it remains an open question whether $\mnd{g}{d}$ admits such a characterization.


\section{Dimensions of moduli spaces for small genus and lattice width}\label{sec:dimmodspace_lowgd}

In this section, we compute the dimensions of $\mnd{g}{d}$ and $\mndexp{g}{d}$ for certain small values, complementing our main result, Theorem~\ref{tmd:equaldim}, in the small genus cases. The first half of this section will focus on stratifying by low genus, and the remainder focuses on stratifying by low expected gonality.  We remark that whenever we have a formula for the dimension of $\mnd{g}{\underline{d}}$, the same formula holds for the corresponding moduli space $\mathcal{M}_{g,d}^\mathrm{nd}$ of non-degenerate algebraic curves of genus $g$ and gonality $d$.

\subsection{Dimensions in low genus}\label{sec:dimlowg}

We show that $\dim(\mnd{g}{d}) = \dim(\mnd{g}{\underline{d}})$ when $g$ is small: 

\begin{theorem}\label{thm:low_genus_computation}
For $g \leq 6$ and $g = 8$,
\[
\dmnd{g}{d} = \dmndexp{g}{d}.
\]
The values of these dimensions are summarized in Table \ref{table:dimensions_low_genus}.
\end{theorem}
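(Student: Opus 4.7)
The plan is to proceed case by case in $g$. For each $g \in \{0,1,\dots,6,8\}$, the set of maximal lattice polygons with $g$ interior points (up to $\Z$-affine equivalence) is finite and has been enumerated in prior work. I would start by tabulating, for every such polygon $P$, the quantities $\egon(P)$, $r(P)$, and $c(P)$, and then computing $\dim(\mathbb{M}_P) = g + r(P) - 3 - c(P)$ via Lemma~\ref{lma:column_vector_formula}. Aggregating these values by $(g,d)$ yields $\dmndexp{g}{d}$ as the maximum of $\dim(\mathbb{M}_P)$ over polygons $P$ with $g(P) = g$ and $\egon(P) = d$, which populates Table~\ref{table:dimensions_low_genus}.

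For $g \leq 4$ the equality is nearly immediate. Every polygon $P$ with $g(P) \leq 3$ has $\egon(P) \leq 3$, so Theorem~\ref{thm:gon=egon_small_egon} gives $\gon(\Gamma) = \egon(P)$ for every skeleton $\Gamma$; the finitely many $g = 4$ polygons with $\egon(P) = 4$ must be dispatched via the explicit analysis referenced in Appendix~\ref{sec: tropical_Appendix}. In every subcase $\mnd{g}{d} = \mndexp{g}{d}$ as sets, so the dimensions coincide trivially.

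For $g \in \{5,6,8\}$, both inclusions must be argued separately. The upper bound $\dmnd{g}{d} \leq \dmndexp{g}{d}$ follows from Lemma~\ref{lma:gon_at_most_egon_graphs}, which gives $\mnd{g}{d} \subseteq \bigcup_{d' \geq d} \mndexp{g}{d'}$: one then only needs to verify from the tabulated dimensions that $\dmndexp{g}{d'}$ is maximized at $d' = d$ across this finite range. For the lower bound $\dmnd{g}{d} \geq \dmndexp{g}{d}$, for each relevant $(g,d)$ I would fix a polygon $P$ achieving the maximum and, using Lemma~\ref{lma:beehive}, construct a regular beehive triangulation $\Delta$ of $P$. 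For every tropical curve $\Gamma$ dual to $\Delta$, Lemma~\ref{lma:gon_at_most_egon_graphs} already supplies $\gon(\Gamma) \leq d$; the matching inequality $\gon(\Gamma) \geq d$ must then be provided by a scramble of order $d$ on the combinatorial dual graph, invoking Proposition~\ref{prop:snlowerbound}. When $P$ contains a crystal of length $d$ the construction from Section~\ref{sec:lower} applies verbatim; otherwise an ad hoc scramble tailored to the specific dual graph is required.

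The hard step will be exactly this bespoke scramble construction for $(g,d)$ pairs with $g < d^3$, where the crystal machinery of Section~\ref{sec:lower} is unavailable—namely the $\egon = 4$ cases at $g = 5, 6, 8$. In each such instance the top-dimensional polygon $P$ is small enough to permit explicit combinatorial analysis, but one still has to design and verify a scramble on each specific dual graph, which does not admit a uniform recipe. The exclusion of $g = 7$ from the statement suggests that in that genus the requisite scramble construction is genuinely harder or even impossible using scramble number alone, consistent with the remark in the Future Directions subsection that the authors could not close the $g = 7$ case by these combinatorial means.
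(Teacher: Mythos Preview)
Your proposal is correct and matches the paper's approach closely: enumerate the maximal polygons for each genus, tabulate $\dim(\mathbb{M}_P)$ via Lemma~\ref{lma:column_vector_formula}, dispose of $d\le 3$ via Theorem~\ref{thm:gon=egon_small_egon}, and for $d=4$ at $g\in\{5,6,8\}$ exhibit a beehive triangulation together with an explicit order-$4$ scramble on its dual graph. One small correction: there are in fact no genus-$4$ polygons with $\egon(P)=4$ (the only candidate with lattice width $4$ is $2\Upsilon$, whose expected gonality is defined to be $3$), so $g=4$ is handled entirely by Theorem~\ref{thm:gon=egon_small_egon} with no additional casework needed.
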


\begin{table}[hbt]\label{table:dimensions_low_genus}
\begin{tabular}{|c|c|c|c|c|c|c|}
\hline
      & $g=2$ & $g=3$ & $g=4$ & $g=5$ & $g=6$ & $g=8$ \\ \hline
$d=2$ & $3$   & $5$   & $7$   & $9$   & $11$  & $15$  \\ \hline
$d=3$ &       & $6$   & $9$   & $11$  & $13$  & $17$  \\ \hline
$d=4$ &       &       &       & $10$  & $13$  & $16$  \\ \hline
\end{tabular}
\caption{The dimension of $\mathbb{M}_{g,d}^\textrm{nd}$ (and of $\mathbb{M}_{g,\underline{d}}^\textrm{nd}$, and of $\mathcal{M}_{g,d}^\textrm{nd}$) for small values of $g$.}
\end{table}

Below, we present the $g = 5$ case. The remaining proofs can be found in the Appendix~\ref{sec: tropical_Appendix} and follow a similar flavor.

\begin{proof}
    For our analysis of genus 5 polygons, we exclude $d = 1$ because a polygon has expected gonality $1$ if and only if its genus is $0$. The case $d = 2$ is covered by Theorem~\ref{thm:d_is_2}. 
    
    Since hyperelliptic polygons only give rise to hyperelliptic metric graphs \cite[Theorem 1.1]{tropical_hyperelliptic_curves_in_the_plane}, we only need to consider non-hyperelliptic polygons of genus $5$. Furthermore, recall that every metric graph arises from a maximal polygon \cite[Lemma 2.6]{brodsky_joswig_morrison_sturmfels}; thus our enumeration may be restricted to those polygons which are maximal.

    Explicit computations of the maximal non-hyperelliptic polygons can be found in \cite{cape2025tropicalcrossingnumberfinite} for $g \leq 25$. For $g = 5$, there are four such polygons up to equivalence, as shown in Figure~\ref{figure:maximal_polygons_genus_5}. Calling these $P_1^{5}$, $P_2^{5}$, $P_3^{5}$, and $P_4^{5}$ from left to right, their expected gonalities are $\egon(P_1^{5})=3$ and $\egon(P_i^{5})=4$ for $2\leq i\leq 4$. Since no polygon has expected gonality $5$ or higher, we only need to consider the cases $d = 3$ and $d = 4$. 

    \begin{figure}[hbt]
        \centering
    \includegraphics{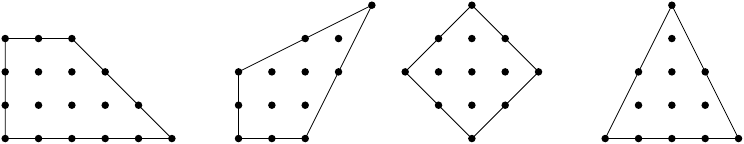}
        \caption{The four maximal non-hyperelliptic polygons of genus $5$ up to equivalence}  \label{figure:maximal_polygons_genus_5}
    \end{figure}

    By Lemma~\ref{lma:column_vector_formula}, one can calculate the moduli dimensions of $P_1^{5}$, $P_2^{5}$, $P_3^{5}$, and $P_4^{5}$ to be $11$, $10$, $10$, and $9$, respectively. These values have been computationally verified in \cite[Theorem 8.5]{brodsky_joswig_morrison_sturmfels}. These computations immediately show $\dim(\mathbb{M}_{5,\underline{3}}^{\textrm{nd}})= 11$ and $\dim(\mathbb{M}_{5,\underline{4}}^{\textrm{nd}}) = 10$. 

    By Theorem~\ref{thm:d_is_3}, since $P_1^{5}$ can only give rise to metric graphs of gonality $3$, we have $\dim(\mnd{5}{3}) = 11$ as the ambient space $\mathbb{M}^{\textrm{nd}}_5$ is $11$-dimensional.

    \begin{figure}[hbt]
        \centering
        \includegraphics[width=0.8\linewidth]{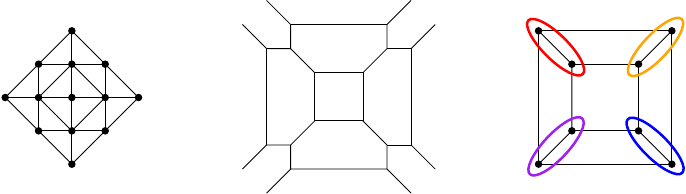}
        \caption{A beehive triangulation of $P_3^{5}$, a tropical curve arising from the triangulation, and its skeleton with a scramble of order $4$ (circled pairs of vertices form the eggs).}
        \label{figure:p3_scramble}
    \end{figure}

    Lemma~\ref{lma:gon_at_most_egon_graphs} shows that metric graphs of gonality $4$ could arise only from $P_2^{5}$, $P_3^{5}$, and $P_4^{5}$. Thus, $\dim(\mathbb{M}_{g,{4}}^{\textrm{nd}})\leq 10$. To show that $\dim(\mathbb{M}_{5,{4}}^{\textrm{nd}})= 10$, we present a $10$-dimensional family of metric graphs of gonality $4$ arising from a beehive triangulation of $P_3$, as illustrated by Figure~\ref{figure:p3_scramble}. As argued at the end of Section 2, all these graphs have gonality $4$; recall we obtained a lower bound using the scramble illustrated on the right.
    
    Thus, by Proposition~\ref{prop:snlowerbound}, every metric graph arising from this triangulation has gonality exactly $4$. By Lemma~\ref{lma:beehive}, the space of graphs arising from this triangulation is $10$-dimensional, and $\dim(\mathbb{M}_{5,{4}}^{\textrm{nd}})= 10$.

    Finally, for $d \geq 5$, the space $\mndexp{g}{d}$ is empty because there do not exist genus $5$ polygons of expected gonality at least $5$. Similarly, the space $\mnd{g}{d}$ is also empty by Lemma~\ref{lma:gon_at_most_egon_graphs}.
\end{proof}

\subsection{Dimensions in low expected gonality}

In this section we study $\dmndexpc{g}{d}$ for low values of $d$, which also gives rise to the respective values of $\dim\bigl(\mathcal{M}_{g,d}^\mathrm{nd}\bigr)$.  It follows quickly from work in \cite{brodsky_joswig_morrison_sturmfels} that $\dmndexpc{g}{2}=2g-1$ and $\dmndexpc{g}{3}=2g+1$. The remainder of this section is dedicated to finding, for the first time, formulas for $\dmndexpc{g}{d}$ when $d=4$ and $7\leq g\equiv1\pmod 3$, and when $d=5$ and $g\geq 12$.

\begin{theorem}\label{thm: d=4}
Let $7\leq g\equiv 1\pmod 3$. Then
\[\dmndexp{g}{4}=\floor{U(g,4)}.\]
\end{theorem}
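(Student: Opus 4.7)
The plan is to establish matching upper and lower bounds. The upper bound $\dmndexp{g}{4} \leq \floor{U(g,4)}$ follows directly from Theorem~\ref{lma:maxdim2_restate}: any maximal polygon $P$ with $g(P) = g$ and $\egon(P) = 4$ satisfies $\dim(\modspace{P}) \leq U(g,4)$, and since dimensions are integers, $\dim(\modspace{P}) \leq \floor{U(g,4)}$. Because $\dmndexp{g}{4}$ is the maximum of $\dim(\modspace{P})$ over such $P$, the bound follows.

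For the matching lower bound, I would exhibit a single polygon $P$ realizing $\dim(\modspace{P}) = \floor{U(g,4)} = \tfrac{5g+13}{3}$. Set $m := (g+5)/3$, an integer with $m \geq 4$ since $g \geq 7$ and $g \equiv 1 \pmod{3}$. Let $P$ be the $m \times 4$ rectangle with two \emph{diagonally opposite} corners removed by isosceles right triangles of leg-length $2$:
\[
P = \conv\{(0,0),\,(m{-}2,0),\,(m,2),\,(m,4),\,(2,4),\,(0,2)\}.
\]
This refines the truncated-rectangle construction from Lemma~\ref{lma:b1bound1}: for $g \equiv 1 \pmod{3}$, the residue $k = 2$ decomposes as $T_1 + T_1 + T_0 + T_0$, prompting two corner cuts of leg-length $2$. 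Placing the cuts diagonally rather than on adjacent corners is essential, since adjacent cuts would introduce a horizontal or vertical column vector and drop the dimension by at least one.

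Next I would verify the invariants of $P$. A direct lattice-point count gives $g(P) = 3(m-1) - 2 = g$ and $r(P) = 2m + 4$. The interior polygon $P^{(1)}$ is a hexagon with vertices $(1,1),(m{-}2,1),(m{-}1,2),(m{-}1,3),(2,3),(1,2)$; it has lattice width $2$ measured vertically, and no direction collapses it further since it spans three $y$-values with nontrivial horizontal extent, so $\egon(P) = \lw(P^{(1)}) + 2 = 4$. Maximality of $P$ follows from a direct edge-by-edge verification that the relaxation $\bigl(P^{(1)}\bigr)^{(-1)}$ recovers $P$.

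The main obstacle is verifying $c(P) = 0$, since each column vector would cost a dimension via Lemma~\ref{lma:column_vector_formula}. I would enumerate the small candidate column vectors $(\pm 1, 0)$, $(0, \pm 1)$, $(\pm 1, \pm 1)$ and show each fails for every possible facet $\tau$. The diagonal cuts are essential here: for instance, $(1,0)$ with $\tau$ the right edge fails because $(m{-}2, 0)$ translates to $(m{-}1, 0)$, which lies strictly inside the removed bottom-right triangle; $(0, -1)$ fails analogously via $(m, 2) \mapsto (m, 1)$; and $(1, 1)$ fails for every candidate facet because either $(m, 3)$, $(m, 4)$, or $(2, 4)$ translates outside $P$. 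Larger vectors are ruled out by the bounded extent of $P$. With $c(P) = 0$ established, Lemma~\ref{lma:column_vector_formula} yields
\[
\dim(\modspace{P}) = g + (2m + 4) - 3 - 0 = \tfrac{5g+13}{3} = \floor{U(g,4)},
\]
completing the proof.
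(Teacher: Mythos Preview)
Your proof is correct and follows essentially the same approach as the paper: both use the upper bound from Theorem~\ref{lma:maxdim2_restate} and exhibit the identical hexagon (the $m\times 4$ rectangle with antipodal $2\Sigma$ cuts, where $m=(g+5)/3=(g-1)/3+2$) to achieve the lower bound via Lemma~\ref{lma:column_vector_formula}. You actually supply more detail than the paper does—verifying $c(P)=0$, maximality via $(P^{(1)})^{(-1)}=P$, and $\egon(P)=4$ explicitly—whereas the paper asserts these facts without justification.
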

\begin{proof}
Consider the polygon $P$ obtained from the $[(g-1)/3+2]\times 4$ rectangle by removing two $2\Sigma$'s from a pair of antipodal corners. See Figure~\ref{fig: d=4} for the example of $P$ when $g=7$.

\begin{figure}[hbt]
        \centering
        \vspace{-15pt}
        \includegraphics[width=0.25\linewidth]{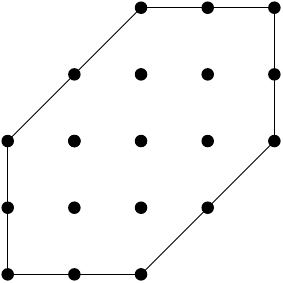}
        \caption{$P$ when $g=7$.}
        \vspace{-10pt}
\label{fig: d=4}
    \end{figure}

As $g(P)+r(P)=[(g-1)/3+3]\cdot 5 - 6$ and $P$ admits zero column vectors, Lemma~\ref{lma:column_vector_formula} yields
\[\dmodspace{P}=\left(\frac{g-1}{3}+3\right)\cdot5-9.\]
Since $g\equiv 1\pmod 3$, there exists some $m\in\Z$ such that $g=3m+1$, and thus
\begin{align*}
\dmodspace{P}&=\left(\frac{g-1}{3}+3\right)\cdot5-9
=\left(\frac{3m}{3}+3\right)\cdot5-9\\
&=(m+3)\cdot5-9=5m+6
=\left\lfloor 5m+6+\frac{2}{3}\right\rfloor\\
&=\left\lfloor 5m+\frac{5}{3}+5\right\rfloor
=\left\lfloor \frac{15m+5}{3}+5\right\rfloor =\left\lfloor \frac{5g}{3}+5\right\rfloor\\
&=\left\lfloor g+\frac{2g}{3}+8-3\right\rfloor
=\floor{U(g,4)}.
\end{align*}
As $P$ has $g$ interior points, and $\egon(P)=\lw(P)=4$ (since $g\geq 7)$, $\dmodspace{P}$ provides a lower bound for $\dmndexpc{g}{4}$. Furthemore, $\floor{U(g,4)}$ provides an upper bound. Combining inequalities yields
$$\floor{U(g,4)}=\dmodspace{P}\leq\dmndexp{g}{4}\leq \floor{U(g,4)}.$$
Since the chain begins and ends with equality, every intermediate inequality is in fact an equality.
\end{proof}

The following lemma reduces the case analysis in our proof of the formula for $\dmndexpc{g}{5}$ when $g\geq 12$.
\begin{lemma}\label{lma:chrangledim_computation}
If $P=P_{\mathrm{trunc}}$ and $x_i=y_i$ for all $i$, then
\[
\dim(\mathbb M_P)=U(g,d)
+ 2+\sum_{i=1}^4 \frac{x_i(x_i-d)}{d-1} - c(P).
\]
\end{lemma}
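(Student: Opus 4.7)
The plan is to chase equalities through the derivation in Section~\ref{sec:upper} and observe that under the given hypotheses, the inequalities leading to the bound $\dim(\mathbb{M}_P)\leq U(g,d)$ collapse to equalities. Concretely, start from Lemma~\ref{lma:column_vector_formula}:
\[
\dim(\mathbb{M}_P) = g(P) + r(P) - 3 - c(P),
\]
and aim to compute $r(P)$ exactly.

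First I would revisit the area/Pick computation leading to~(\ref{abbound}). The slack there comes from a single spot: the bound $g(P_{\mathrm{trunc}}) \leq g(P)$. When $P=P_{\mathrm{trunc}}$, this is automatically an equality, so applying Pick's theorem to $P$ and solving for $a+b$ gives
\[
a+b \;=\; \frac{2g}{d-1} + 2 + \sum_{i=1}^4 X_i,
\]
with $X_i$ as in~(\ref{eq: xi}). Next I would verify that $r(P) = a+b+2d$, i.e.\ that equality holds in Proposition~\ref{lma:rbound}. The hypothesis $x_i=y_i$ forces each cut edge to have slope $\pm1$, so along each horizontal strip the cut contributes exactly one boundary lattice point per integer height; any uncut corner contributes a vertical edge doing the same. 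Hence for every $1 \leq i \leq d-1$, $|L_i\cap\partial P|=2$, which is precisely the equality case of Proposition~\ref{lma:rbound}.

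Plugging these two equalities back into Lemma~\ref{lma:column_vector_formula} yields
\[
\dim(\mathbb{M}_P) = g + \frac{2g}{d-1} + 2d - 3 + 2 + \sum_{i=1}^4 X_i - c(P) = U(g,d) + 2 + \sum_{i=1}^4 X_i - c(P).
\]
Finally, I would simplify $X_i$ using $x_i=y_i$: since $\gcd(x_i,y_i)=x_i=y_i$, the term $y_i-\gcd(x_i,y_i)$ vanishes, and
\[
X_i \;=\; \frac{x_i(y_i-d)-(y_i-\gcd(x_i,y_i))}{d-1} \;=\; \frac{x_i(x_i-d)}{d-1},
\]
giving exactly the claimed formula.

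The only subtlety—really the only place one can slip—is justifying that $|L_i\cap\partial P|=2$ for every interior horizontal line, including at the ``transition'' heights $i \in \{y_1,y_2,d-y_3,d-y_4\}$ where a cut ends; one should note that at such heights the cut endpoint is itself a lattice point of $\partial P$ paired with a lattice point on the opposite side. Beyond that, the argument is pure bookkeeping on the already-established identities from Section~\ref{sec:upper}.
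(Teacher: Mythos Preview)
Your proof is correct and follows essentially the same approach as the paper's: track equality through the area/Pick computation to get $a+b$ exactly (using $P=P_{\mathrm{trunc}}$ to kill the $g(P_{\mathrm{trunc}})\le g(P)$ slack), observe that the slopes $0,\infty,\pm1$ force $r(P)=a+b+2d$, and simplify $X_i$ via $\gcd(x_i,y_i)=x_i$. The paper's version is simply more compressed and cites the slope observation directly rather than routing through the equality case of Proposition~\ref{lma:rbound}.
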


\begin{proof}
When $x_i=y_i$, $X_i$, as defined in (\ref{eq: xi}), is equal to $x_i(x_i-d)/(d-1)$. Using $A(P)=g+r(P)/2-1$ and the standard
area computation for $P_{\mathrm{trunc}}$,
\[
a+b \;=\; 2+\frac{2g}{d-1}+\sum_{i=1}^4 X_i.
\]
Since all edges of $P$ have slopes $0,1/0,$ or $\pm1$, it follows that
$r(P)=a+b+2d$. Hence
\[
\dim(\mathbb M_P)=g+r(P)-3-c(P)
=U(g,d)+2+\sum_i X_i - c(P).
\]
\end{proof}

\begin{theorem}\label{thm: d=5} For $g \ge 12,$
    \[\dmndexp{g}{5} = \floor{U(g,5)} - 1.\]
\end{theorem}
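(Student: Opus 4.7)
The plan is to mirror the strategy of Theorem~\ref{thm: d=4}: exhibit explicit maximal polygons $P$ achieving $\dim(\mathbb{M}_P)=\lfloor U(g,5)\rfloor-1$ for the lower bound, then sharpen Theorem~\ref{lma:maxdim2_restate} by one dimension to obtain the matching upper bound.

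For the lower bound, I would split by $g\pmod 4$. In each residue class, start from the $5\times n$ rectangle with $n$ the smallest integer such that $g_0:=4(n-1)\geq g$, and remove exactly $g_0-g$ interior lattice points by cutting isosceles triangles at opposite corners of sizes $(2,2)+(3,3)$, $(3,3)$, $(2,2)+(2,2)$, or $(2,2)$ according as $g_0-g$ equals $4,3,2,1$. Each cut has leg length $\geq 2$ and removes at least one interior lattice point, so the resulting polygon is maximal; the vertical lattice width of the interior polygon stays at $3$, so $\egon(P)=5$. Applying Lemma~\ref{lma:chrangledim_computation} with the values $X_i=k(k-5)/4\in\{-3/2,-1\}$, together with a direct count giving $c(P)=0$ in the two-cut cases and $c(P)=2$ in the one-cut cases, I would verify $\dim(\mathbb{M}_P)=\lfloor U(g,5)\rfloor-1$ in every residue class.

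For the upper bound, I must show that every maximal $P$ with $g(P)=g\geq 12$ and $\egon(P)=5$ satisfies
\[
\sum_{i=1}^4 X_i - c(P)\;\leq\;-3-\{U(g,5)\},
\]
which via the master inequality of Section~\ref{sec:upper} yields $\dim(\mathbb{M}_P)\leq\lfloor U(g,5)\rfloor-1$. I would revisit the case split by $\mathcal{C}$ (the number of cuts) from Lemmas~\ref{lma:0134cut} and~\ref{2cuts}, tightening each case by a single unit. The extra input specific to $d=5$ is twofold: a rectangle of lattice width $5$ has $c(P)=4$, so the $\mathcal{C}=0$ case already forces $\sum X_i-c\leq -4$; and maximality forces every cut $(x_i,y_i)$ to satisfy $\min(x_i,y_i)\geq 2$, with $X_i=-1$ only when the cut shape is $(4,4)$, $(2,4)$, or $(4,2)$. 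When $g$ is odd, a parity argument---cuts with $X_i=-1$ all remove an even number of interior lattice points---then forces at least one cut with $X_i\leq -3/2$, supplying the half-unit needed.

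The hardest step will be the borderline case $\mathcal{C}=2$ with $\sum X_i=-2$: a priori a hexagon obtained by placing two opposing cuts from $\{(4,4),(4,2),(2,4)\}$ could attain $\dim(\mathbb{M}_P)=U(g,5)=\lfloor U(g,5)\rfloor$. These configurations must be ruled out by showing that either the interior polygon acquires lattice width $2$---violating $\egon(P)=5$, as happens for two opposing $(4,4)$ cuts---or the non-$\pm 1$ slopes of the new edges create non-lattice crossings of some intermediate horizontal slice $L_i$, so that Proposition~\ref{lma:rbound} forces $r(P)<a+b+2d$ strictly and the master inequality drops by at least one dimension. This geometric sub-analysis---pinning down exactly which two-cut shapes preserve both maximality and $\egon=5$ with tight boundary-point count---is the technical heart of the upper bound argument.
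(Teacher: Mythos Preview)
Your lower-bound construction is exactly the paper's: same four residue classes modulo $4$, same corner-cut polygons, same appeal to Lemma~\ref{lma:chrangledim_computation}.

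For the upper bound your plan is workable but diverges from the paper's in one structural way, and contains a small computational slip. The slip: $(4,2)$ gives $X_i=\frac{4(-3)-0}{4}=-3$, not $-1$; under your hypothesis $\min(x_i,y_i)\ge 2$ the complete list of short cuts with $X_i=-1$ is $\{(4,4),(2,4)\}$ only. Your parity observation is in fact correct---one checks that an $X_i=-1$ cut removes exactly $2(x_i-1)$ interior points, always even---so it does handle the odd-$g$ residues. But for $g$ even you still must dispose of the $\mathcal{C}=2$ configurations built from $\{(4,4),(2,4)\}$ cuts with $c(P)=0$, and your sketch (``interior width drops'' or ``non-lattice crossings'') would need to be made precise for several placements.

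The paper avoids this entire borderline analysis by a different reduction. Instead of proving $\sum X_i-c(P)\le -3-\{U(g,5)\}$ globally, it assumes some $P$ attains $\lfloor U(g,5)\rfloor$ and invokes Lemma~\ref{maxr}: if $r(P)\neq a+b+2d$ then already $r(P)\le a+b+2d-2$ and the dimension drops by two, done. So $r(P)$ is maximal, forcing every non-horizontal edge to have slope $1/m$; hence $x_i\ge y_i$ in every cut of $P_{\mathrm{trunc}}$. This single inequality kills $(2,4)$ outright, leaving $(4,4)$ as the only candidate short cut with $X_i=-1$---and a $(4,4)$ cut produces a non-maximal polygon. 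Thus every short cut has $X_i<-1$, so at most one short cut is possible; then a direct column-vector count ($c\ge 2$ for $\le 1$ cut) and a shear argument for the ``long'' cuts $y_i=5$ finish. This route replaces your geometric sub-analysis of hexagons by the single observation that maximal $r(P)$ forces $x_i\ge y_i$, which is both shorter and avoids the residue-dependent casework.
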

\begin{proof}
    We will stratify by cases. Note that when $d = 5$, corner-cuts of size $(x_i,y_i)=(2,2),(x_j,y_j)=(3,3)$ yields $X_{i} = \frac{2(2-5)}{4} = -1.5$, and $X_{j} = \frac{3(3-5)}{4} = -1.5$.

    \begin{figure}[hbt]
        \centering
        \includegraphics[width=0.8\linewidth]{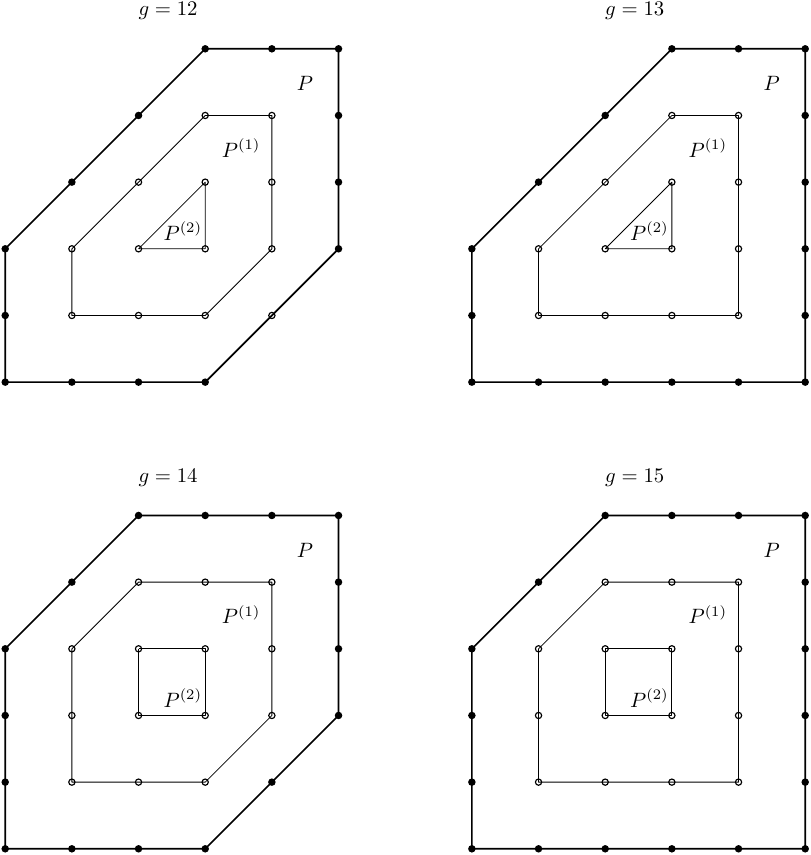}
        \caption{Example top-dimensional polygons for $g = 12, 13, 14, 15$ and $d=5$.}
\label{fig:chrangle_examples_d_5}
    \end{figure}
    
    \begin{itemize}
        \item \underline{Case 1: $g \equiv 0 \pmod{4}.$}
        In this case, we may choose $P$ to be the polygon obtained by removing antipodal corners of size $(3,3)$ and $(2,2)$, respectively, from the $d \times (\frac{g + 4}{4} +1)$ rectangle. 
        
        Then by Lemma~\ref{lma:chrangledim_computation}, $\dim(\mathbb{M}_{P}) = {U(g,5)} + 2 +  \sum_{i=1}^4 X_i  - c(P) = {U(g,5)} + 2 + (-3) - 0 = \floor{U(g,5)} - 1.$
        
        \item \underline{Case 2: $g \equiv 1 \pmod{4}.$}
        In this case, choose $P$ to be the polygon obtained by removing one corner of size $(3,3)$ from the $d \times (\frac{g+3}{4} + 1)$ rectangle. 
        
        Then by Lemma~\ref{lma:chrangledim_computation}, $\dim(\mathbb{M}_{P}) = {U(g,5)} + 2 +  \sum_{i=1}^4 X_i  - c(P) = {U(g,5)} + 2 + (-1.5) - 2 = \floor{U(g,5)} - 1.$
        
        \item \underline{Case 3: $g \equiv 2 \pmod{4}.$}
        In this case, we choose $P$ to be the polygon obtained by removing two antipodal corners of size $(2,2)$ from the $d \times (\frac{g+2}{4} + 1)$ rectangle. 
        
        Then, as in Case 1, by \ref{lma:chrangledim_computation}, $\dim(\mathbb{M}_{P}) =  {U(g,5)} + 2 + (-3) - 0 = \floor{U(g,5)} - 1.$
        \item \underline{Case 4: $g \equiv 3 \pmod{4}.$}

        In this case, we may choose $P$ to be the polygon obtained by removing one corner of size $(2,2)$ from the $d \times (\frac{g+1}{4} + 1)$ rectangle.
        
        Then, as in Case 2, by \ref{lma:chrangledim_computation}, $\dim(\mathbb{M}_{P}) = {U(g,5)} + 2 + (-1.5) - 2 = \floor{U(g,5)} - 1.$
    \end{itemize}

    Since $g \ge 12$, it follows that ${\frac{g}{4}} + 2 \ge 5$, so all of the polygons described in the above casework maintain lattice width $5$, and are maximal. In particular, when $P$ is not an isosceles right triangle, lw$(P) =$ lw$(P^{(1)}) + 2,$ and no intermediate polygons $P^{(i)}$ (with $0 \le i \le k-1$) in the shell decomposition $P = P^{(0)}\supset P^{(1)} \supset ... \supset P^{(k)}$ are isosceles right triangles; see Figure \ref{fig:chrangle_examples_d_5}. In fact since $d = 5$, the shell decomposition of these polygons terminates with the double interior $P^{(2)}$, which in all four cases has lattice width $1$, so egon($P$) = lw$(P) = $ lw($P^{(2)}) + 4 = 5$.

    In any case, we find that for all $g\geq 12$, $$\floor{U(g,5)} - 1 \le \dmndexp{g}{5} \le \floor{U(g,5)}.$$ 

    Now, suppose seeking a contradiction that there exists some $g\geq12$ with $\dim(\mathbb{M}_{g,\underline{5}}) = \floor{U(g,5)}.$ So $\dim(\mathbb{M}_P) = \floor{U(g,5)}$ for some polygon $P$ of genus $g$ and expected gonality $5$. By assumption, $P$ is not the $5 \times 5$ isosceles right triangle, as this shape has genus $6 < 12$; thus egon($P$) = lw($P$) = 5.
    
    Note that $\dim(\mathbb{M}_P) \le U(g,5) + 2 + \lceil \sum_{i=1}^4X_i \rceil - c(P),$ so for this particular polygon, $  c(P) - \lceil \sum_{i=1}^4X_i \rceil \le 2$. Note that $c(P) \ge 0$ and $-X_i \ge 0$ for each $i$, so this means that both $c(P) \le 2$ and $\lceil\sum_{i=1}^4X_i \rceil \le 2$. Furthermore, since $P$ achieves the upper bound $U(g,d),$ it must maximize $r(P)$, and therefore have sides all with slope $0, \infty, $ or $1/m, m \in \Z$, so $x_i \ge y_i$ for each $i$. If not, there is some side $\tau'$ contained in the region corresponding to the corner cut $(x_i, y_i)$; say without loss of generality that this is corner $(x_1, y_1)$ in the top right; with $\infty > $ slope($\tau'$) $\ge \frac{y_1}{x_1} > 1$, a contradiction since slope($\tau'$) is an inverse-integer or zero.

    By the proof of \ref{boundxi}, if $y_i \le d-1 = 4$, then $X_i \le -1, $ with equality only if $x_i \le d-1$ (as $y_i \le x_i$ for each $i$). 
    
    If $X_i = -1, $ then $x_i (y_i - 5) - (y_i - \gcd(x_i, y_i)) = -4$. Additionally, $-( y_i - \gcd(x_i, y_i))\le 0,$ and $2 \le x_i, 2 \le y_i \le 4)$, and so in particular $-4 \le x_i (y_i - 5) \le 0$. 
    
    This leaves possible pairs $(x_i, y_i) = (2,4), (3,4), (4,4), (2,3)$. The only pair which satisfies $y_i \le x_i$ is $(x_i, y_i) = (4,4)$; however, this particular cut results in a polygon which is not maximal. 

    Therefore, $X_i < 1$ for each $i$, and $\sum_{i=1}^4 X_i \le 2$, so $P$ has at most one cut, say (after possibly rotating and flipping) that this cut is $X_1$. If $P$ has no cuts, then it is a rectangle, and so $c(P) = 4,$ a contradiction. So $P$ must have exactly one cut of size $(x_1,y_1)$, with $X_1 < -1$. In this case, $P$ has column vectors $(1,0)$ and $(0,-1)$, so $c(P) - X_i > 2 + 1 > 2,$ a contradiction.

    The only remaining case to check is if $y_i = 5$ for each $i$ where $(x_i,y_i)$ corresponds to a cut of $P$. For this case, either $P$ has two edges with slopes $\frac{1}{n_1} > \frac{1}{n_3}$ (possibly with one more intermediate edge of slope $\frac{1}{n_2}$), as in \ref{fig:egon_5_longcut_ex}; or the cut $X_1$ contains exactly one edge of slope $\frac{1}{n} = \frac{5}{x_i}$. 

    \begin{figure}[hbt]
        \centering
        \includegraphics[width=0.8\linewidth]{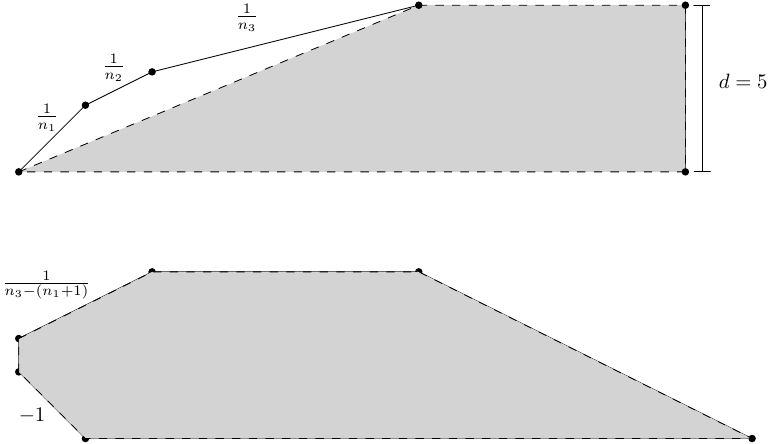}
        \caption{}
        \label{fig:egon_5_longcut_ex}
    \end{figure}
    
    In the first situation, we perform a shear transformation of $n_1 + 1$ to the left, and obtain a polygon with two corner cuts $(x_1, y_1), (x_2,y_2),$ each with each $2\leq y_1, y_2 \le 3$, a contradiction as then both $X_1, X_2 < -1$.
    Having eliminated this situation, every corner cut of $P$ is ``long", in the sense that $y_i = 5$ and so without loss of generality, $X_1 = (5n, 5)$, and at most one of corner-cuts $3$ and $4$ is a cut (also a ``long'' cut). Thus, $P$ is a trapezoid, so $c(P) \ge 3$, a contradiction.

    Therefore, dim$(\mathbb{M}_P) \neq U(g,d)$, for every polygon with expected gonality $5$ and genus at least $5$, and we may conclude the theorem.

\end{proof}

\newpage
\bibliographystyle{alpha}
\newcommand{\etalchar}[1]{$^{#1}$}

\newpage

\appendix

\section{Computing dimensions of moduli spaces for small genus}

Below, we show Theorem~\ref{thm:low_genus_computation} for all $g \leq 6$ and $g = 8$. Recall that the hyperelliptic $d = 2$ case is shown in Theorem~\ref{thm:d_is_2}, so we only enumerate the non-hyperelliptic polygons in the following proofs.

\label{sec: tropical_Appendix}

\subsection{Genus 0}

Recall the expected gonality of a genus $0$ polygon is $1$. By the tropical Riemann-Roch theorem~\cite{bn, GK08}, a metric graph has gonality $1$ if and only if it is a tree. Thus, $\mnd{0}{1} = \mndexp{0}{1}$.

\subsection{Genus 1 and 2}

If a polygon has genus 1 or 2, then $P$ is hyperelliptic and by Theorem~\ref{thm:d_is_2}, we have $\mnd{1}{2} = \mndexp{1}{2}$.

\subsection{Genus 3 and 4}

We claim that all non-hyperelliptic maximal polygons of genus $3$ and $4$ have expected gonality $3$. There is only one non-hyperelliptic maximal polygon of genus $3$ up to equivalence, namely $4\Sigma$. In genus $4$, there are three such polygons, namely those in Figure~\ref{figure:genus_4_polygons}. Recall that the left-most polygon is $2\Upsilon$, whose expected gonality is specially defined to be $3$. It can then be seen that all genus $4$ maximal non-hyperelliptic polygons have expected gonality $3$.

    \begin{figure}[hbt]
        \centering
        \includegraphics[width=1.0\linewidth]{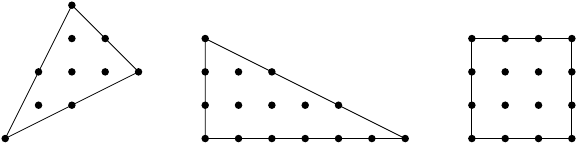}
        \caption{Maximal polygons of genus $4$ up to equivalence}
        \label{figure:genus_4_polygons}
    \end{figure}
    
Since non-hyperelliptic polygons do not give rise to metric graphs of gonality $2$ (Theorem~\ref{thm:ralph_hyperelliptic}), we may invoke Lemma~\ref{lma:gon_at_most_egon_graphs} to find that these polygons only gives rise to metric graphs of gonality $3$. Thus, $\mnd{3}{3} \subseteq \mndexp{3}{3}$ and $\mnd{4}{3} \subseteq \mndexp {4}{3}$. The reverse inclusion comes from Theorem~\ref{thm:d_is_3}, so $\mnd{3}{3} = \mndexp{3}{3}$ and $\mnd{4}{3} = \mndexp{4}{3}$.

\subsection{Genus 5}

The proof for $g = 5$ can be found in the main text of the paper in Section~\ref{sec:dimlowg}.

\subsection{Genus 6}

We enumerate the maximal polygons of genus $6$ below in Figure~\ref{figure:genus_6_polygons}. Label them $P_1^{6}, P_2^{6}, P_3^{6}, P_4^{6}, P_5^{6}$ from left to right. Correspondingly, these polygons have expected gonality sequence $3, 3, 4, 4, 4$, and the dimension of their moduli spaces are $13, 12, 12, 13, 12$. Thus, $\dim (\mndexp{6}{3}) = \dim (\mndexp{6}{4}) = 13$. By Theorem~\ref{thm:d_is_3}, $\dim (\mnd{6}{3}) \geq 13$, and $\dim (\mnd{6}{3}) \leq 13$ because the ambient space $\mathbb{M}_{6}^\mathrm{nd}$ is $13$-dimensional.

    \begin{figure}[hbt]
        \centering
        \includegraphics[width=1.0\linewidth]{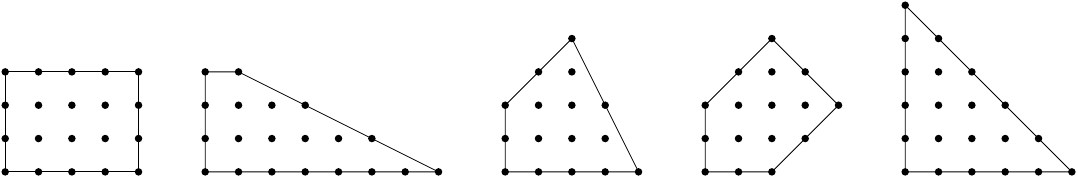}
        \caption{Maximal polygons of genus $6$ up to equivalence}
        \label{figure:genus_6_polygons}
    \end{figure}

To show $\dim \mnd{6}{4} \geq 13$, we consider the family of metric graphs arising from the beehive triangulation $\Delta$ of $P_4^{6}$ shown in Figure~\ref{fig:genus6_triangulation}. The eggs of an order $4$ scramble on the combinatorial graph dual to $\Delta$ are also shown. Thus, $\dim (\mnd{6}{4}) = 13$.

    \begin{figure}[hbt]
        \centering
        \includegraphics[width=0.6\linewidth]{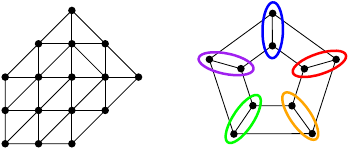}
        \caption{The triangulation $\Delta$ on $P_4^{6}$ and the dual graph with eggs circled}
        \label{fig:genus6_triangulation}
    \end{figure}

\subsection{Genus 8}

We enumerate the maximal polygons of genus $8$ below in Figure~\ref{figure:genus_8_polygons}. Label the polygons $P_1^{8}, \dots, P_{10}^{8}$ sequentially by rows, from left to right then top to bottom. Correspondingly, these polygons have expected gonality sequence $4, 4, 4, 3, 4, 4, 4, 4, 4, 4$, and the dimension of their moduli spaces are $16, 15, 15, 17, 15, 11, 17, 14, 15, 14$. Thus, $\dim(\mndexp{8}{3}) = \dim(\mndexp{8}{4}) = 17$. By Theorem~\ref{thm:d_is_3}, $\dim (\mnd{8}{3}) \geq 17$, and $\dim (\mnd{8}{3}) \leq 17$ because the ambient space $\modspace{8}$ is $17$-dimensional.
    \begin{figure}[hbt]
        \centering
        \includegraphics[width=0.8\linewidth]{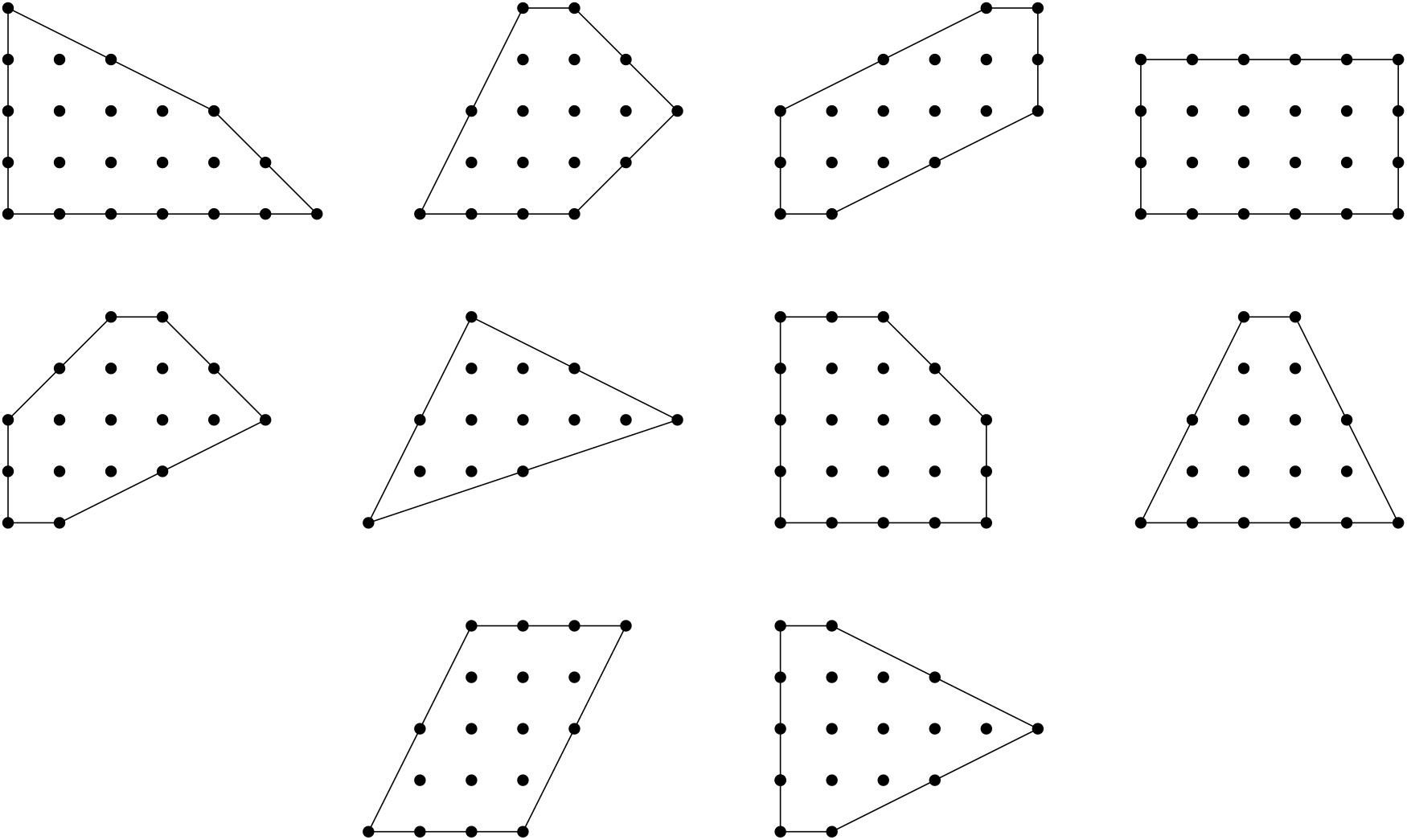}
        \caption{Maximal polygons of genus 8 up to equivalence}
        \label{figure:genus_8_polygons}
    \end{figure}

To show $\dim \mnd{8}{4} \geq 17$, we consider the family of metric graphs arising from the beehive triangulation $\Delta$ of $P_7^{8}$ shown in Figure~\ref{figure:genus_8_triangulation}. The eggs of an order $4$ scramble on the combinatorial graph dual to $\Delta$ is also shown. Thus, $\dim (\mnd{8}{4}) = 17$.

    \begin{figure}[hbt]
        \centering
        \includegraphics[width=0.7\linewidth]{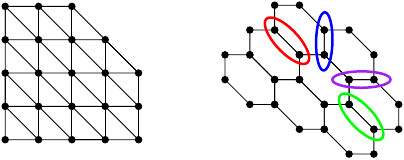}
        \caption{The triangulation $\Delta$ on $P_7^{8}$ and the dual graph with eggs circled.}
        \label{figure:genus_8_triangulation}
    \end{figure}

\end{document}